\newcommand{\gap}{\vspace{5mm}}
\let\oldmarginpar\marginpar
\renewcommand\marginpar[1]{\-\oldmarginpar[\raggedleft\footnotesize #1]{\raggedright\footnotesize #1}}
\newcounter{mnotes}
\theoremstyle{plain}
\newtheorem{thm}{Theorem}
\newtheorem{lem}[thm]{Lemma}
\newtheorem{cor}[thm]{Corollary}
\newtheorem{prop}[thm]{Proposition}
\newtheorem{fact}[thm]{Fact}
\theoremstyle{definition}
\newtheorem{dfn}[thm]{Definition}
\newtheorem{exa}[thm]{Example}
\newtheorem{rem}[thm]{Remark}
\newcommand{\cat}[1]{\mathsf{#1}}
\newcommand{\DLat}{\cat{DLat}}
\newcommand{\Filt}{\cat{filt}}
\newcommand{\id}{\cat{id}}
\newcommand{\Idl}{\cat{idl}}
\newcommand{\JSPL}{\cat{JSPL}}
\newcommand{\MSPL}{\cat{MSPL}}
\newcommand{\DSPL}{\cat{DSPL}}
\newcommand{\JSDPL}{\cat{JSDPL}}
\newcommand{\KO}{\cat{KO}}
\newcommand{\KS}{\cat{KS}}
\newcommand{\Lat}{\cat{Lat}}
\newcommand{\pt}{\cat{pt}}
\DeclareMathOperator{\spec}{\cat{spec}\,}
\newcommand{\Top}{\cat{Top}}
\newcommand{\op}{\mathrm{op}}
\newcommand{\B}{\mathbb{B}}
\newcommand{\C}{\mathbb{C}}
\newcommand{\D}{\mathbb{D}}
\newcommand{\E}{\mathbb{E}}
\newcommand{\bF}{\mathbb{F}}
\newcommand{\F}{\mathcal{F}}
\renewcommand{\phi}{\varphi}
\newcommand{\G}{\mathcal{G}}
\renewcommand{\H}{\mathcal{H}}
\newcommand{\I}{\mathcal{I}}
\renewcommand{\L}{\mathbb{L}}
\newcommand{\M}{\mathbb{M}}
\newcommand{\N}{\mathbb{N}}
\renewcommand{\O}{\mathcal{O}}
\renewcommand{\P}{\mathbb{P}}
\newcommand{\Q}{\mathbb{Q}}
\newcommand{\Po}{\mathcal{P}}
\newcommand{\sR}{\mathcal{R}}
\renewcommand{\S}{\mathbb{S}}
\newcommand{\lad}{\dashv}
\renewcommand{\implies}{\Rightarrow}
\newcommand{\limp}{\rightarrow}
\renewcommand{\lor}{\vee}
\renewcommand{\land}{\wedge}
\newcommand{\isom}{\cong}
\title{Duality and canonical extensions for stably compact spaces\footnote{\it 2010 Mathematics Subject Classification: Primary 54H99, secondary 03G10, 18A35. Keywords: stably compact space; proximity lattice; canonical extension; Priestley duality; Stone duality; splitting by idempotents.}}
\date{6 September, 2011}\author{Sam van Gool\footnote{\it Institute for Mathematics, Astrophysics and Particle Physics, Radboud Universiteit Nijmegen, P.O. Box 9010, 6500 GL Nijmegen, The Netherlands. E-mail: s.vangool@math.ru.nl}}
\numberwithin{thm}{section}
\begin{document}
\maketitle
\abstract{We construct a canonical extension for strong proximity lattices in order to give an algebraic, point-free description of a finitary duality for stably compact spaces. In this setting not only morphisms, but also objects may have distinct $\pi$- and $\sigma$-extensions.}
\section*{Introduction}
{\it Strong proximity lattices} were introduced, after groundwork of Michael Smyth \cite{Smyth1992}, by Achim Jung and Philipp S\"underhauf \cite{Jung1996}, who showed that these structures are dual to {\it stably compact spaces}, which generalise spectral spaces and are relevant to domain theory in logical form (cf. for example \cite{Abramsky1987} and \cite{Jung2004}). 

The {\it canonical extension}, which first appeared in a paper by Bjarni J\'onsson and Alfred Tarski \cite{Jonsson1952}, has proven to be a powerful method in the study of logics whose operations are based on lattices, such as classical modal logic (\cite{Jonsson1952}, \cite{Jonsson1994}), distributive modal logic (\cite{Gehrke2000}, \cite{Gehrke2004Sahlqvist}), and also intuitionistic logic (\cite{Ghilardi1997}). Canonical extensions are interesting because they provide a formulaic, algebraic description of Stone-type dualities between algebras and topological spaces.

In this paper, we re-examine the Jung-S\"underhauf duality \cite{Jung1996} and put it in a broader perspective by connecting it with the theory of canonical extensions. We now  briefly outline the contents.

Careful study of the duality in \cite{Jung1996} led us to conclude that the axioms for strong proximity lattices were stronger than necessary. The advantage of assuming one axiom less, as we will do here, is that it will become more apparent how the inherent self-duality of stably compact spaces is reflected in the representing algebraic structures. We introduce our definitions and terminology, and discuss the mentioned self-duality, in Section~\ref{sec:representing}.

Our most important contribution is the existence and uniqueness proof of canonical extensions for proximity lattices satisfying one additional strongness condition, which we give in Section~\ref{sec:canext}. In the same section, we start the study of the canonical extensions of morphisms, motivated by the historical cases mentioned above, where extending morphisms made canonical extensions useful.

Then, in Section~\ref{sec:duality}, we report our understanding of strong proximity lattices and the duality from \cite{Jung1996}, which has become clearer after studying it through algebraic methods. In particular, we will show that the duality, as we present it in Section~\ref{sec:representing}, can be understood as an application of a general category-theoretical construction (i.e., {\it splitting by idempotents}) to an earlier, well-known correspondence between continuous functions on spectral spaces and certain relations on the associated distributive lattice.

\section{Representing stably compact spaces}\label{sec:representing}
\subsection{Definitions and examples}
We first recall the definition of stably compact and spectral spaces.
\begin{dfn}
Let $X$ be a topological space. A set $S \subseteq X$ is called {\bf saturated} if it is an intersection of opens, and {\bf compact} if any open cover of $S$ contains a finite subcover.

The space $X$ is called
\begin{enumerate} 
\item {\bf locally compact} if, for any open neighbourhood $U$ of a point $x \in X$, there exists an open set $V$ and a compact set $K$ such that $x \in V \subseteq K \subseteq U$. 

\item {\bf sober} if the assignment $x \mapsto \{U \subseteq X \text{ open} \ | \ x \in U\}$ is surjective onto the set of completely prime filters of the frame of opens of $X$, and {\bf $T_0$} if this assignment is injective.

\item {\bf stably compact} if $X$ is $T_0$, sober, locally compact, and the collection $\KS(X)$ of compact-saturated sets is closed under finite intersections.

\item {\bf spectral} if $X$ is $T_0$, sober and the collection $\KO(X)$ of compact-open sets forms a basis for the open sets which is closed under finite intersections. 
\end{enumerate}
Note that a spectral space is always stably compact.
\end{dfn}
Historically, Grothendieck introduced the term `sober' for the spaces which arose in the work of Papert and Papert \cite{Papert1958}. Stably compact spaces seem to have been first studied by Johnstone \cite{Johnstone1982}, who called them `stably locally compact'. Spectral spaces are much older and were first studied by Stone \cite{Stone1937}. The term `spectral' was introduced by Hochster \cite{Hochster1969}, who proved that spectral spaces are also exactly the spaces which arise as the Zariski spectra of commutative rings. The reader is referred to the historical notes of Chapters II and VI of \cite{Johnstone1982} for more details.

\gap

Let us fix some notation: given a lattice $\L$, we denote by $\L^\op$ the {\bf opposite lattice}, which has the same underlying set as $\L$, but the opposite order (i.e., the operations $\land$ and $\lor$, and $\top$ and $\bot$ are interchanged). For a relation $R \subseteq A \times B$, we define the {\bf converse relation} $R^{-1} \subseteq B \times A$ by $b\, R^{-1} \, a$ iff $aRb$, and we write, for $A' \subseteq A$, $R[A'] := \{b \in B: \exists a \in A' : aRb\}$. The {\bf composition} of relations $R \subseteq A \times B$ and $S \subseteq B \times C$ is written as $a \, R \circ S \, c$, which is defined to hold iff there exists $b \in B$ s.t. $aRb$ and $bSc$.

\gap

Jung and S\"underhauf \cite{Jung1996} defined ``strong proximity lattices'' to obtain algebraic structures dual to stably compact spaces. Note that our definition is more general than the one in \cite{Jung1996}: in particular, our proximity lattices are not assumed to be distributive, and we split the property of a proximity lattice being ``strong'' into ``join-strong'' and ``meet-strong''.

\begin{dfn}
A {\bf proximity lattice} is a pair $(\L,R)$, where $\L = (L, \vee, \wedge, \bot, \top)$ is a lattice and $R \subseteq L \times L$ is a relation satisfying the following axioms:
\begin{enumerate}
\item $R \circ R = R$,
\item For any finite set $A \subseteq L$ and $b \in L$, $\bigvee A \, R \, b \iff \forall a \in A\, aRb$.
\item For any finite set $B \subseteq L$ and $a \in L$, $a \, R \, \bigwedge B \iff \forall b \in B\, aRb$.
\end{enumerate}
A proximity lattice is called {\bf join-strong} if, furthermore,
\begin{enumerate}
\item[4.] For any finite set $B \subseteq L$ and $a \in L$, if $a \, R \, \bigvee B$, then there is a finite set $B' \subseteq R^{-1}[B]$ such that $a \, R \, \bigvee B'$.
\end{enumerate}

Dually, a proximity lattice is called {\bf meet-strong} if
\begin{enumerate}
\item[5.] For any finite set $A \subseteq L$ and $b \in L$, if $\bigwedge A \, R \, b$, then there is a finite set $A' \subseteq R[A]$ such that $\bigwedge A' \, R \, b$.
\end{enumerate}

A proximity lattice is {\bf doubly strong} if it is both join-strong and meet-strong. 

We call a relation $R$ on a lattice {\bf increasing} if $aRb$ implies $a \leq b$. In the original definition of Jung and S\"underhauf, it was emphasized that the relation $R$ of a proximity lattice does not need to be increasing. However, as we will see below, the assumption that the relation {\it is} increasing does not change the category of proximity lattices, up to equivalence, and makes the ensuing theory quite a bit cleaner and easier to present. We will come back to this point in Remark~\ref{rem:increasing}.

This definition of proximity lattice is close to, but a bit more general than that of Jung and S\"underhauf \cite{Jung1996}. Note in particular that the `proximity lattices' of Jung and S\"underhauf are what we will call `distributive proximity lattices'. The idea of using distributive proximity lattices to represent stably compact spaces is already present in Smyth \cite{Smyth1992}, but he did not assume both `strong' axioms (4) and (5) for his structures.
\end{dfn}
It follows directly from the definitions that $((\L^\op)^\op, (R^{-1})^{-1}) = (\L,R)$, and that if $(\L,R)$ is a join-strong proximity lattice, then $(\L^\op,R^{-1})$ is a meet-strong proximity lattice. We will come back to the topological meaning of this order duality in Subsection~\ref{subsec:cocompact}.

\begin{exa}
\label{exa:openbasis}
Let $X$ be a stably compact space. Let $\D$ be a basis for the open sets which is closed under finite intersections and finite unions. Note that $\D$ with the inclusion order is a distributive lattice. Define the relation $R \subseteq \D \times \D$ by
\[ dRe \iff \text{ there exists a $k \in \KS(X)$ such that } d \subseteq k \subseteq e. \]
We call $(\D,R)$ an {\bf open-basis presentation} of the space $X$.

Dually, if $\E$ is a `basis' for the compact saturated sets of $X$ (i.e., every compact saturated set $K$ of $X$ is an intersection of elements from $\E$) which is closed under finite unions and intersections, we regard it as a distributive lattice with the converse inclusion order. We then define the relation $S \subseteq \E \times \E$ by 
\[ kSl \iff \text{ there exists an open set $u$ such that } k \supseteq u \supseteq l, \] and call $(\E,S)$ a {\bf compsat-basis presentation} of $X$. 

\begin{fact}\label{fact:openbasisjspl}
\begin{enumerate}
\item An open-basis presentation of a stably compact space is a join-strong proximity lattice, which is furthermore increasing and distributive.

\item A compsat-basis presentation of a stably compact space is a meet-strong proximity lattice, which is furthermore increasing and distributive.
\end{enumerate}
\end{fact}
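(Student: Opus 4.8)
The plan is to verify, for an open-basis presentation $(\D,R)$, each clause of the definition of a join-strong proximity lattice, reading off distributivity and the increasing property along the way. Distributivity is immediate: since $\D$ is closed under finite intersections and unions it is a sublattice of the powerset lattice $(\mathcal P(X),\cup,\cap)$, hence distributive. The relation is increasing because $dRe$ gives some $k\in\KS(X)$ with $d\subseteq k\subseteq e$, whence $d\subseteq e$. The one tool driving everything else is local compactness in the following packaged form: if $K\in\KS(X)$ and $K\subseteq u$ with $u$ open, then covering each point of $K$ by a basic open set that has a compact-saturated neighbourhood contained in $u$ (replacing the compact neighbourhood by its saturation), and passing to a finite subcover, yields $d'\in\D$ and $k'\in\KS(X)$ with $K\subseteq d'\subseteq k'\subseteq u$. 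Here I use that finite unions of compact-saturated sets are again compact-saturated and that $\D$ is closed under finite unions.

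Next I would dispatch axioms (1)--(3). For axiom (1), the inclusion $R\circ R\subseteq R$ is a transitivity computation: from $d\subseteq k\subseteq e$ and $e\subseteq l\subseteq f$ the single witness $k$ already gives $dRf$. The reverse inclusion $R\subseteq R\circ R$ is the interpolation step, and is where the packaged tool is used: given $d\subseteq k\subseteq f$, apply it with $K=k$ and $u=f$ to obtain a basic open $e$ and some $k'\in\KS(X)$ with $k\subseteq e\subseteq k'\subseteq f$, so that $dRe$ (witness $k$) and $eRf$ (witness $k'$). Axioms (2) and (3) assert preservation of finite joins and meets; the nontrivial directions amalgamate the witnessing compact-saturated sets by finite union (for (2)) and by finite intersection (for (3)). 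Crucially, axiom (3) is exactly where the stability hypothesis enters, since a finite intersection of compact-saturated sets must again be compact-saturated; the empty cases use that $\emptyset$ and $X$ lie in $\KS(X)$, the latter because a stably compact space is compact.

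The substantive step, and the one I expect to be the main obstacle, is the join-strong axiom (4). Given $a\,R\,\bigvee B$ with $B=\{b_1,\dots,b_n\}$, fix a witness $k\in\KS(X)$ with $a\subseteq k\subseteq b_1\cup\cdots\cup b_n$. For each point of $k$, local compactness supplies a basic open neighbourhood together with a compact-saturated set trapped inside whichever $b_i$ contains that point; a finite subcover of the compact set $k$ then yields, for each index $i$, an element $c_i\in\D$ (a finite union of those basic opens, hence in $\D$) with $c_i\,R\,b_i$, so $c_i\in R^{-1}[B]$. Since the chosen neighbourhoods cover $k$ we obtain $a\subseteq k\subseteq c_1\cup\cdots\cup c_n=\bigvee\{c_1,\dots,c_n\}$, and the same $k$ witnesses $a\,R\,\bigvee B'$ for $B'=\{c_1,\dots,c_n\}$. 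The delicate point is organising the finite subcover so that each selected piece is genuinely trapped inside a single $b_i$ while the union still recaptures $k$; this is precisely what local compactness together with closure of $\D$ under finite unions makes possible.

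Part (2) is proved by the order-dual argument, interchanging the roles of open sets and compact-saturated sets and, correspondingly, of finite unions and finite intersections. In a compsat-basis presentation $(\E,S)$ the lattice operations are reversed by the converse-inclusion order, the relation $S$ is increasing for that order, and the analogues of axioms (1)--(3) and of the meet-strong axiom (5) are established by the same sandwiching technique with the two kinds of sets swapped. This mirroring is exactly the order-duality already recorded, namely that $(\L,R)$ join-strong yields $(\L^\op,R^{-1})$ meet-strong, so that Part (2) is the formal dual of Part (1) rather than an independent computation.
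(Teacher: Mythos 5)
Your treatment of Part (1) is correct and considerably more detailed than the paper's own proof, which simply remarks that the proximity axioms are routine and defers the strongness argument to Theorem 23 of \cite{Jung1996}. Your packaged local-compactness tool (take a compact neighbourhood, replace it by its saturation, pass to a finite subcover, use closure of $\D$ under finite unions) is exactly the right engine, and you deploy it correctly both for interpolation ($R \subseteq R \circ R$) and for the join-strong axiom (4); the attention to the empty cases ($\emptyset, X \in \KS(X)$) is also welcome.

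Part (2), however, has a genuine gap. You claim it is ``the formal dual of Part (1) rather than an independent computation'', justified by the recorded fact that $(\L,R)$ join-strong implies $(\L^\op,R^{-1})$ meet-strong. But that purely lattice-theoretic duality only transforms structures \emph{already known} to be join-strong proximity lattices; it does not show that a compsat-basis presentation $(\E,S)$ of $X$ is (isomorphic to) the opposite of some open-basis presentation. Opens and compact saturated sets are not definitionally symmetric in a stably compact space --- their interchangeability is a theorem (the co-compact dual theorem of Subsection~\ref{subsec:cocompact}, which requires knowing that $X^d$ is again stably compact), not a formality. Concretely, if you run ``the same sandwiching technique with the two kinds of sets swapped'', it breaks at the interpolation axiom $S \subseteq S \circ S$ (and again at meet-strongness): given $k \supseteq u \supseteq l$ with $u$ open, local compactness produces an open $v$ and a compact saturated $q$ with $l \subseteq v \subseteq q \subseteq u$, but $q$ need not lie in $\E$. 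In Part (1) the analogous repair was cheap: every open is a union of basic opens, and compactness of the sandwiched set extracts a finite union, which lies in $\D$. Dually, however, $\E$ is a basis only in the sense that $q = \bigcap\{e \in \E : q \subseteq e\}$ is a \emph{filtered intersection}, and to extract a single $e \in \E$ with $q \subseteq e \subseteq u$ you need well-filteredness (essentially the Hofmann--Mislove theorem): a filtered intersection of compact saturated sets contained in an open forces some member to be contained in that open. So Part (2) needs a genuinely extra topological ingredient --- either well-filteredness of stably compact spaces, or the co-compact dual theorem (apply Part (1) to the open-basis presentation $\{X \setminus e : e \in \E\}$ of $X^d$ and only then invoke the formal order-duality). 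Either route works, but one of them must be supplied; the formal duality alone does not close the argument.
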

\begin{proof}
In both items, it is not hard to check that all the axioms for a proximity lattice are satisfied. The arguments for join- and meet-strongness are essentially the same as those given in the proof of Theorem 23 in \cite{Jung1996}.
\end{proof}
\end{exa}

\begin{exa}\label{exa:doublerep}
To get a {\it doubly} strong proximity lattice representing a stably compact space, we can construct a lattice of pairs of open and compact-saturated sets, as was done in Section 6 of \cite{Jung1996}. We briefly recall this construction.

Let $(\D,R)$ and $(\E,S)$ be an open-basis and a compsat-basis presentation of a stably compact space $X$. Let $\bF$ be the sublattice of the lattice $\D \times \E^\op$, consisting of those pairs $(d,e)$ for which $d \subseteq e$ as subsets of $X$. Define the relation $T$ on $\L$ by $(d,e)T(d',e')$ iff $e \subseteq d'$ as subsets of $X$.

\begin{fact}[Theorem 23, \cite{Jung1996}]
$(\bF,T)$ is a doubly strong distributive proximity lattice.
\end{fact}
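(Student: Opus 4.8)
The plan is to first pin down the lattice structure of $\bF$ and then verify the five axioms, the only genuine ingredients being local compactness and sobriety of $X$. Since $\D$ carries the inclusion order and $\E$ the reverse-inclusion order, the join and meet in $\D \times \E^\op$ are computed coordinatewise as $(d_1,e_1) \vee (d_2,e_2) = (d_1 \cup d_2,\, e_1 \cup e_2)$ and $(d_1,e_1)\wedge(d_2,e_2) = (d_1\cap d_2,\, e_1 \cap e_2)$. I would check that the defining condition $d \subseteq e$ is preserved by both operations, so that $\bF$ is closed under them; its bounds are $\bot=(\emptyset,\emptyset)$ and $\top = (X,X)$ (note $X\in\KS(X)$, since the empty intersection is $X$). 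Distributivity is then inherited from the product of the distributive lattices $\D$ and $\E^\op$.

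Axioms (2) and (3) are immediate: because $\bigvee$ and $\bigwedge$ in $\bF$ are the coordinatewise union and intersection, and $(d,e)\,T\,(d',e')$ means exactly $e \subseteq d'$, both reduce to the set-theoretic equivalences $\bigcup_i e_i \subseteq d' \iff \forall i\, e_i \subseteq d'$ and $e \subseteq \bigcap_j d'_j \iff \forall j\, e \subseteq d'_j$, the empty cases being handled by the bounds. For axiom (1), the inclusion $T\circ T \subseteq T$ is easy: if $e \subseteq d'$ and $e' \subseteq d''$ then, since $d' \subseteq e'$ for the middle pair $(d',e')\in\bF$, we get $e \subseteq d' \subseteq e' \subseteq d''$. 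The reverse inclusion $T \subseteq T\circ T$ is where topology first enters: given $e \subseteq d''$ with $e$ compact saturated and $d''$ open, I would use local compactness to interpolate an open $V$ and a compact saturated $K$ with $e \subseteq V \subseteq K \subseteq d''$, refine $V$ to a basis open $d'\in\D$ with $e\subseteq d'\subseteq V$ (cover the compact set $e$ by basis opens inside $V$ and take a finite union), and then, since $\E$ is a basis for the compact saturated sets and the members of $\E$ above $K$ form a filtered family, invoke well-filteredness (sobriety) of $X$ to obtain a single $e'\in\E$ with $K \subseteq e' \subseteq d''$. Then $d'\subseteq K\subseteq e'$, so $(d',e')\in\bF$, and it witnesses $(d,e)\,(T\circ T)\,(d'',e'')$.

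For join-strongness (axiom 4), suppose $(d,e)\,T\,\bigvee B$ with $B=\{(d_j,e_j)\}$ finite, i.e. $e \subseteq \bigcup_j d_j$. As $e$ is compact saturated, for each $x\in e$ I pick $j$ with $x\in d_j$ and, by the same interpolation-and-refinement step as in axiom (1), produce a basis open $c_x\in\D$ and a basis compact saturated $f_x\in\E$ with $x\in c_x\subseteq f_x\subseteq d_j$; then $(c_x,f_x)\in\bF$ and $f_x\subseteq d_j$ gives $(c_x,f_x)\,T\,(d_j,e_j)$, so $(c_x,f_x)\in T^{-1}[B]$. Compactness of $e$ yields a finite subcover $c_{x_1},\dots,c_{x_m}$, and $B'=\{(c_{x_i},f_{x_i})\}$ then satisfies $e\subseteq\bigcup_i c_{x_i}$, that is $(d,e)\,T\,\bigvee B'$, as required.

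Meet-strongness (axiom 5) is the order-dual statement, and I expect it to be the main obstacle to present cleanly, since a direct argument must ``fatten'' each $e_i$ to a slightly larger compact saturated set while keeping the finite intersection inside $d'$. The cleanest route is to rewrite the hypothesis $\bigcap_i e_i \subseteq d'$ as $X\setminus d' \subseteq \bigcup_i (X\setminus e_i)$: here $X\setminus d'$ is closed, hence compact saturated for the co-compact topology $\tau^\partial$, while each $X\setminus e_i$ is $\tau^\partial$-open, so this is exactly a join-strong-type covering of a compact saturated set by opens in $(X,\tau^\partial)$. Since $(X,\tau^\partial)$ is again stably compact, the argument of the previous paragraph applies and, translated back, yields basis compact saturated sets $f_\ell$ and basis opens $c_\ell$ with $e_{i(\ell)}\subseteq c_\ell\subseteq f_\ell$ and $\bigcap_\ell f_\ell\subseteq d'$, so that $A'=\{(c_\ell,f_\ell)\}\subseteq T[A]$ witnesses $\bigwedge A'\,T\,b$. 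Alternatively one can appeal directly to the order-duality $(\L,R)\mapsto(\L^\op,R^{-1})$ already noted above, which interchanges join- and meet-strongness and corresponds on the topological side precisely to passing to the co-compact dual space.
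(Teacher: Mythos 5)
Your proof is correct, and it supplies something the paper itself does not: this Fact is stated with no argument at all, being quoted directly from Theorem 23 of the cited Jung--S\"underhauf paper (just as the proof of Fact~\ref{fact:openbasisjspl} defers to the same source). So your write-up reconstructs the cited proof, and with the right ingredients. The lattice computations and axioms (1)--(3) are as routine as you say; the genuine content is your interpolation-and-refinement step, which combines pointwise local compactness with compactness of $e$ (plus saturating the resulting compact interpolant, which is harmless since saturation does not escape an open superset), the basis property of $\D$, and well-filteredness to land inside $\E$. Two remarks. First, well-filteredness is not among the paper's defining axioms of stable compactness; it is a theorem (sober $+$ locally compact $\Rightarrow$ well-filtered) on which the paper itself also leans --- see the footnote in the proof of Theorem~\ref{thm:canextviaduality} --- so in a polished version you should cite or prove it rather than bracket it as ``(sobriety)''. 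Second, your handling of meet-strongness is the best part of the proposal: instead of a bare-hands ``fattening'' argument, you reduce it to the already-proved join-strong covering argument carried out in the co-compact dual $X^d$, using that $\{X\setminus f : f \in \E\}$ is an open basis and $\{X \setminus c : c \in \D\}$ a compsat basis of $X^d$; and your closing observation --- that this is nothing but the order-duality $(\L,R)\mapsto(\L^\op,R^{-1})$ exchanging join- and meet-strongness, realized topologically as passage to $X^d$ --- is precisely the self-duality the paper emphasizes in Subsection~\ref{subsec:cocompact}. That symmetry argument is the cleanest route to axiom (5) and is faithful to how the doubly strong pair construction is meant to behave.
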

\end{exa}

\begin{exa}
Note that any basis for a space $X$ which is closed under finite unions must contain all compact-open sets of the space. If $X$ is a spectral space, then we can take the basis $\D$ consisting just of compact-open sets. The relation $R$ from Example~\ref{exa:openbasis} then coincides with the lattice order, and $(\D,R)$ is doubly strong.
\end{exa}

An example with a more algebraic flavour is the following.
\begin{exa}
Let $X$ be a set of variables ({\it generators}) and $E$ a set of pairs of lattice terms  ({\it relations}) in the variables from $X$. The {\bf lattice $\L_{(X,E)}$ presented by $(X,E)$} is the quotient of the free lattice $\F(X)$ on the variables $X$ by the smallest congruence containing $E$. 

Now suppose $S$ is a relation on $\L_{(X,E)}$ which makes $(\L_{(X,E)},S)$ into a proximity lattice. We have the natural homomorphism $h: \F(X) \to \L_{(X,E)}$, which induces a relation $R$ on $\F(X)$ by $aRb$ iff $h(a) S h(b)$. Then $(\F(X),R)$ is also a proximity lattice. Moreover, if $S$ satisfies either of the strong axioms, then so does $R$. Notice also that $R$ is not necessarily an increasing relation, whereas the proximity relations in the previous examples were.
\end{exa}

This last example shows that there are proximity lattices in which $R$ is not increasing, but cf. Remark~\ref{rem:increasing} and Proposition~\ref{prop:increasingisom}.

\subsection{Morphisms}
It should be clear from the above examples that distributive proximity lattices which look very different may present the same stably compact space; for example, two bases for the same space do not even need to have the same cardinality. Thus, two proximity lattices may present the same space, even if their underlying lattices are not isomorphic. However, we do want  proximity lattices which present the same space to be isomorphic in the category of proximity lattices. Consequently, it should come as no surprise that the notion of morphism for proximity lattices needs to be quite lax.

\begin{dfn}
Let $(\L,R)$ and $(\M,S)$ be proximity lattices. A {\bf proximity relation} between $(\L,R)$ and $(\M,S)$ is a relation $G \subseteq L \times M$ which satisfies the following conditions:
\begin{enumerate}
\item $G \circ S = G$,
\item $R \circ G = G$,
\item For any finite set $A \subseteq L$ and $b \in M$, $\bigvee A \, G \, b \iff \forall a \in A \, a G b.$
\item For any finite set $B \subseteq M$ and $a \in L$, $a \, G \, \bigwedge B \iff \forall b \in B\, a G b.$
\end{enumerate}
The relation $G$ is called {\bf join-approximable} if, furthermore
\begin{enumerate}
\item[4.] For any finite set $B \subseteq M$ and $a \in L$, if $a\, G \, \bigvee B$, then there is a finite set $A \subseteq G^{-1}[B]$ such that $a \, R \, \bigvee A$.\end{enumerate}

Dually, $G$ is called {\bf meet-approximable} if
\begin{enumerate}
\item[5.] For any finite set $A \subseteq L$ and $b \in M$, if $\bigwedge A \, G \, b$, then there is a finite set $B \subseteq G[A]$ such that $\bigwedge B \, S \, b$.
\end{enumerate}
A {\bf proximity morphism} from a proximity lattice $(\L,R)$ to a proximity lattice $(\M,S)$ is a relation $H \subseteq L \times M$ such that the relation $H^{-1} \subseteq M \times L$ is a proximity relation. If $H^{-1}$ is furthermore join-approximable (meet-approximable), then we call $H$ a {\bf j-morphism} ({\bf m-morphism}).
\end{dfn}
We will show in Lemma~\ref{lem:proximitymorphismidealsfilters} that proximity morphisms preserve important structure of the proximity lattice, i.e., its round ideals and filters. See Subsection~\ref{sec:filtersandideals} for details.

\begin{rem}[On the direction of morphisms and weak isomorphisms]\label{rem:directionmorphisms}
Compared to \cite{Jung1996}, our morphisms are going in the opposite direction. We have made this choice because we want the category of algebras to be {\it dually} equivalent to the category of spaces; this way, the dual equivalence between the categories of join-strong distributive proximity lattices and stably compact spaces directly generalizes the well-known Stone duality between distributive lattices and spectral spaces. Because we kept the definition of the objects `proximity lattice' as in \cite{Jung1996}, the following results will now necessarily look slightly unnatural. Of course, the choice of direction of morphisms is ultimately a matter of taste, since the morphisms in the category are relations.
\end{rem}

\begin{lem}[\cite{Jung1996}, Section 7]
Join-strong proximity lattices with j-morphisms form a category $\JSPL$. More precisely,

\begin{enumerate}
\item The relational composition of two j-morphisms is again a j-morphism.

\item If $(\L,R)$ is a join-strong proximity lattice then $R^{-1} : (\L,R) \to (\L,R)$ is a j-morphism which acts as the identity for the composition.
\end{enumerate}
\end{lem}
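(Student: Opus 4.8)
The plan is to reduce both statements to facts about \emph{proximity relations}, exploiting that a j-morphism $H$ is by definition a relation whose converse $H^{-1}$ is a join-approximable proximity relation, and that relational composition satisfies $(H \circ K)^{-1} = K^{-1} \circ H^{-1}$ while $(R^{-1})^{-1} = R$. Part (1) then follows once I show that the composite of two join-approximable proximity relations is again one: writing $H : (\L,R) \to (\M,S)$ and $K : (\M,S) \to (\N,T)$, the composite $H \circ K$ has converse $K^{-1} \circ H^{-1}$, a composite of the join-approximable proximity relations $K^{-1}$ and $H^{-1}$. Part (2) follows once I check that $R$ is a join-approximable proximity relation from $(\L,R)$ to itself, together with the two identity laws.

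First I would record a monotonicity property of every proximity relation $G$ between $(\L,R)$ and $(\M,S)$. Instantiating axiom (4) with the two-element set $B = \{b,b'\}$ where $b \leq b'$ (so $\bigwedge B = b$) gives $aGb \implies aGb'$, i.e.\ $G$ is upward closed in its second argument; dually, instantiating axiom (3) with $A = \{a,a'\}$ where $a \leq a'$ shows $a'Gb \implies aGb$, i.e.\ $G$ is downward closed in its first argument. This is the essential tool for the nontrivial directions below.

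Next, for the composite $G \circ G'$ of proximity relations $G : (\L_1,R_1) \to (\L_2,R_2)$ and $G' : (\L_2,R_2) \to (\L_3,R_3)$, axioms (1) and (2) are immediate from associativity of relational composition together with $G' \circ R_3 = G'$ and $R_1 \circ G = G$. For axiom (3), the forward direction uses axiom (3) of $G$; for the converse, given witnesses $b_a$ with $aGb_a$ and $b_a G' c$ for each $a \in A$, I would set $b := \bigvee_a b_a$, whereupon monotonicity upgrades each $aGb_a$ to $aGb$, so $\bigvee A \, G \, b$ by axiom (3) of $G$, while $bG'c$ follows from axiom (3) of $G'$; hence $\bigvee A \, (G \circ G') \, c$. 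Axiom (4) is dual, using $b := \bigwedge_c b_c$ and downward closure in the first argument. For join-approximability, given $a \, (G \circ G') \, \bigvee B$, choose $b$ with $aGb$ and $b \, G' \, \bigvee B$; join-approximability of $G'$ yields a finite $B' \subseteq (G')^{-1}[B]$ with $b \, R_2 \, \bigvee B'$, whence $a \, G \, \bigvee B'$ by axiom (1) of $G$; join-approximability of $G$ then yields a finite $A \subseteq G^{-1}[B']$ with $a \, R_1 \, \bigvee A$, and a short chase through the two memberships shows $A \subseteq (G \circ G')^{-1}[B]$, as required.

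Finally, for part (2), verifying that $R$ is a join-approximable proximity relation from $(\L,R)$ to itself is a direct translation: proximity-relation axioms (1)--(4) become exactly proximity-lattice axioms (1)--(3) (with $R \circ R = R$ covering both (1) and (2)), and join-approximability is precisely the join-strong axiom (4), so $R^{-1}$ is a j-morphism. The two identity laws are obtained by dualizing the absorption axioms: taking converses of $H^{-1} \circ R = H^{-1}$ (axiom (1) for $H^{-1}$) gives $R^{-1} \circ H = H$, and taking converses of $R \circ K^{-1} = K^{-1}$ (axiom (2) for $K^{-1}$) gives $K \circ R^{-1} = K$. I expect the only genuinely delicate steps to be the converse directions of axioms (3) and (4) for the composite, where a \emph{single} intermediate witness must be manufactured from the family $\{b_a\}$; the monotonicity observation is exactly what makes the join (respectively meet) of these witnesses do the job.
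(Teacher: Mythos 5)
Your proposal is correct. The paper gives no proof of its own for this lemma---it simply cites Section 7 of \cite{Jung1996}---and your direct verification (reducing both parts to closure of join-approximable proximity relations under relational composition via $(H \circ K)^{-1} = K^{-1} \circ H^{-1}$, manufacturing the single intermediate witness by the join/meet of the family $\{b_a\}$ using the monotonicity extracted from axioms (3)--(4), and observing that join-approximability of $R$ is literally the join-strong axiom while the identity laws are the converses of the absorption axioms) is essentially the standard argument of the cited source, correctly adapted to this paper's reversed direction of morphisms.
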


Of course, we also have a category $\MSPL$ of meet-strong proximity lattices with m-morphisms.

We now also get two categories of doubly strong proximity lattices, namely the full subcategory $\DSPL_j$ of $\JSPL$ and the full subcategory $\DSPL_m$ of $\MSPL$.

We will use the terms {\bf j-isomorphic} and {\bf m-isomorphic} to indicate that proximity lattices are isomorphic in the sense of category theory. That is, $(\L,R)$ is j-isomorphic to $(\M,S)$ if there exist j-morphisms $\Phi : (\L,R) \to (\M,S)$ and $\Psi : (\M,S) \to (\L,R)$ such that $\Phi \circ \Psi = R^{-1}$ and $\Psi \circ \Phi = S^{-1}$. Note that the existence of a j- or m-isomorphism does {\it not} imply that the underlying lattices $\L$ and $\M$ are isomorphic.

Denote the category of lattices with lattice homomorphisms by $\Lat$. We then have a functor $\F : \Lat \to \JSPL$, as follows.
\begin{prop}\label{prop:Fisfunctor}
\begin{enumerate}
\item Let $\L$ be a lattice. Then $(\L,\leq)$ is a join-strong proximity lattice. 

\item Let $h : \L \to \M$ be a function between lattices. Define the relation $\F(h) : (\L, \leq) \to (\M,\leq)$ by $a\,\F(h)\,b$ iff $h(a) \geq b$. Then $h$ is a homomorphism if and only if $\F(h)$ is a j-morphism.

\item If $h : \L \to \M$ and $k: \M \to \N$ are lattice homomorphisms, then $\F(k \circ h) = \F(h) \circ \F(k)$.
\end{enumerate}
\end{prop}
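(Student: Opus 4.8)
The plan is to verify the three parts in turn, translating each relational condition on $\F(h)$ into the familiar language of lattice homomorphisms. For part (1) I would simply check the proximity-lattice axioms for $(\L,\leq)$ directly. Axiom 1 ($\leq\circ\leq\,=\,\leq$) is reflexivity together with transitivity; axioms 2 and 3 are exactly the universal properties of $\bigvee$ and $\bigwedge$ (the empty cases recovering $\bot\leq b$ and $a\leq\top$); and the join-strong axiom 4 is trivial because $B\subseteq\,\leq^{-1}[B]=\,\downarrow\! B$, so one may take $B'=B$ and use $a\leq\bigvee B=\bigvee B'$. Thus $(\L,\leq)$ is join-strong (and in fact increasing).

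For part (2) the main work is to unfold what it means for $\F(h)$ to be a j-morphism, i.e. for $G:=\F(h)^{-1}\subseteq M\times L$ to be a join-approximable proximity relation between $(\M,\leq)$ and $(\L,\leq)$, where $m\,G\,\ell\iff m\leq h(\ell)$. I would go through the four proximity-relation conditions one by one. Conditions 2 ($\leq_\M\circ\,G=G$) and 3 are automatic, following from transitivity of $\leq_\M$ and the universal property of $\bigvee$ in $\M$. Condition 1 ($G\circ\leq_\L=G$) is equivalent to $h$ being order-preserving: the nontrivial inclusion needs $\ell'\leq\ell\Rightarrow h(\ell')\leq h(\ell)$, and conversely testing with $m=h(\ell')$ recovers monotonicity. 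Condition 4 unwinds, via the universal property of $\bigwedge$ in $\M$, to the statement that $a\leq h(\bigwedge B)\iff a\leq\bigwedge_{b\in B}h(b)$ for all $a\in M$, which holds for all $a$ precisely when $h(\bigwedge B)=\bigwedge_{b\in B}h(b)$; i.e. $h$ preserves all finite meets (the empty case giving $h(\top_\L)=\top_\M$).

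The subtlest step, and the one I expect to be the main obstacle, is showing that join-approximability of $G$ is equivalent to $h$ preserving all finite joins, because join-approximability is an existential condition. Computing $G^{-1}[B]=\,\downarrow\!\{h(b):b\in B\}$, the forward direction is easy: if $h$ preserves joins, then for $a\,G\,\bigvee B$ (i.e. $a\leq h(\bigvee B)=\bigvee_{b\in B}h(b)$) the witness set $A=\{h(b):b\in B\}\subseteq G^{-1}[B]$ satisfies $a\leq\bigvee A$. For the converse I would feed in the extremal element $a=h(\bigvee B)$: join-approximability then yields a finite $A\subseteq\,\downarrow\!\{h(b):b\in B\}$ with $h(\bigvee B)\leq\bigvee A\leq\bigvee_{b\in B}h(b)$, which combined with monotonicity gives $h(\bigvee B)=\bigvee_{b\in B}h(b)$ (the empty case giving $h(\bot_\L)=\bot_\M$). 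Assembling these equivalences, $\F(h)$ is a j-morphism iff $h$ is order-preserving and preserves all finite meets and joins; since meet-preservation already forces monotonicity, this is exactly the condition that $h$ be a bounded lattice homomorphism, proving part (2).

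Finally, part (3) is a direct relational computation that I would carry out using the paper's composition convention. For $a\in L$ and $c\in N$ we have $a\,\F(k\circ h)\,c\iff k(h(a))\geq c$, while $a\,(\F(h)\circ\F(k))\,c\iff\exists\,b\in M:\ h(a)\geq b\text{ and }k(b)\geq c$. The implication from right to left uses monotonicity of the homomorphism $k$ (so $k(h(a))\geq k(b)\geq c$), and from left to right one simply takes $b=h(a)$, using reflexivity. Hence the two relations coincide, establishing the contravariant functoriality $\F(k\circ h)=\F(h)\circ\F(k)$.
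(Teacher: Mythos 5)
Your proposal is correct and follows essentially the same route as the paper's proof: part (1) by noting join-strongness is trivial for a reflexive relation, part (2) by reducing the proximity-morphism conditions to meet-preservation and then proving the join-approximability/join-preservation equivalence by testing at the extremal element $h(\bigvee B)$ (the paper's $c := h(\bigvee A)$), and part (3) by the same two-line relational computation with witness $b = h(a)$ and monotonicity of $k$. The only difference is that you spell out the unfolding of the proximity-relation axioms that the paper dismisses as ``easy to check,'' which is a welcome addition rather than a deviation.
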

\begin{proof}
\begin{enumerate}
\item The axiom of join-strongness becomes trivial when the relation is reflexive. The other axioms of a proximity lattice reduce to simple facts about lattice operations.

\item It is easy to check that $\F(h)$ is a proximity morphism if and only if $h$ is a meet-preserving function. We now show that, for any order-preserving function $h$, $\F(h)^{-1}$ is join-approximable if and only if $h$ preserves finite joins.

Suppose $\F(h)^{-1}$ is join-approximable. Take any finite subset $A$ of $\L$ and put $c := h(\bigvee A)$. Then $\bigvee h[A] \leq c$ since $h$ is order-preserving. We prove that $c \leq \bigvee h[A]$. Since we clearly have $\bigvee A \, \F(h) \, c$, we can pick $B \subseteq \F(h)[A]$ such that $c \leq \bigvee B$, by the join-approximability of $\F(h)^{-1}$. Then, for any $b \in B$, there is $a \in A$ such that $b \leq h(a)$. In particular, $b \leq \bigvee h[A]$. We conclude that $c \leq \bigvee B \leq \bigvee h[A]$. 

Conversely, suppose $h$ preserves finite joins. Take any finite subset $A$ of $\L$ and $c \in \M$ such that $\bigvee A \, \F(h) \, c$, i.e., $c \leq h(\bigvee A)$. Now put $B := h[A]$. Then it is clear that $B \subseteq \F(h)[A]$, and since $h$ preserves finite joins we have $c \leq h(\bigvee A) = \bigvee h[A] = \bigvee B$, so $c \leq \bigvee B$, as required.

\item For $a \in \L$ and $c \in \N$, we have $k(h(a)) \geq c$ iff there exists $b \in \M$ such that $k(b) \geq c$ and $h(a) \geq b$: the witness for the left-to-right direction is $b := h(a)$, and for the right-to-left direction we use that $k$ is order-preserving. Hence, $\F(kh) = \F(h) \circ \F(k)$.\qedhere
\end{enumerate}
\end{proof}

Of course, similar results hold for the assignment $\L \mapsto (\L,\leq)$ viewed as a functor into the category $\MSPL^\op$. In particular, we have that a function $h: \L \to \M$ between lattices is a homomorphism if and only if the relation $\F'(h)$ defined by $b \, \F'(h) \, a$ iff $h(a) \leq b$ is an m-morphism.

\subsection{Co-compact dual}\label{subsec:cocompact}
We already noted that the definition of proximity lattices is self-dual, and, moreover, that the opposite of a join-strong proximity lattice is a meet-strong proximity lattice. This order duality reflects certain properties of stably compact spaces, which seem to have been part of folklore for a while. These properties were summarized in Jung \cite{Jung2004}, from where we now briefly recall the results that we will need for our discussion. In fact, a large part of the theory of ordered compact spaces, which underlies these results, was already developed in the 1950's by Nachbin \cite{Nachbin1965}.
\begin{lem}[\cite{Jung2004}, Lemma 2.8]
In a stably compact space, any intersection of compact saturated sets is compact.
\end{lem}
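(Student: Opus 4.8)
The plan is to separate the statement into two claims---that an arbitrary intersection of compact saturated sets is saturated, and that it is compact---and to reduce the compactness claim to the well-filtered property of sober, locally compact spaces. The saturatedness is immediate: each compact saturated set is by definition an intersection of open sets, so an arbitrary intersection of such sets is again an intersection of opens, hence saturated. The work is therefore entirely in establishing compactness.

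For compactness, let $(K_i)_{i \in I}$ be the given family and write $K := \bigcap_{i \in I} K_i$. First I would replace this family by the family $\mathcal{D}$ of all finite intersections $K_{i_1} \cap \cdots \cap K_{i_n}$. Since $X$ is stably compact, $\KS(X)$ is closed under finite intersections, so every member of $\mathcal{D}$ is again compact and saturated; moreover $\mathcal{D}$ is downward directed (being closed under finite intersections) and $\bigcap \mathcal{D} = K$. The purpose of this reduction is that the well-filtered property applies to directed families, so passing to finite intersections is exactly what lets us use it.

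Now, given any open cover of $K$, let $U$ be its union, an open set with $K = \bigcap \mathcal{D} \subseteq U$. The crucial step is to invoke well-filteredness: since $X$ is sober and locally compact and $\mathcal{D}$ is a family of compact saturated sets closed under finite intersections whose intersection is contained in the open set $U$, there is some $D \in \mathcal{D}$ with $D \subseteq U$. Because $D$ is compact, finitely many members of the original cover already cover $D$, and since $K \subseteq D$ these same finitely many sets cover $K$. Hence $K$ is compact, completing the argument.

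The main obstacle is the well-filtered step, which is precisely where soberness (together with local compactness) is used; every other step is purely formal. I would either cite the standard characterization of sober locally compact spaces as exactly the well-filtered $T_0$ ones (the result recorded in the literature cited here), or prove well-filteredness directly---for instance via the Hofmann--Mislove correspondence between compact saturated sets and Scott-open filters of the frame of opens, using soberness to produce, should no $D \in \mathcal{D}$ satisfy $D \subseteq U$, a completely prime filter and hence a point of $K$ lying outside $U$, contradicting $K \subseteq U$.
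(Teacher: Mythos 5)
Your proof is correct. The paper gives no proof of this lemma at all---it is cited directly from Jung (2004, Lemma 2.8)---but your argument (saturation is immediate as an intersection of opens; for compactness, close the family under finite intersections using stable compactness and then invoke the well-filtered property of sober, locally compact spaces to extract a single member inside the union of any open cover) is exactly the standard argument behind that citation, and the well-filtered characterization you rely on is even present, verbatim, in a commented-out theorem (\cite{GHK2003}, Theorem II-1.21) in the paper's source immediately preceding the lemma.
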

Thus, for a stably compact space $X$, the collection $\tau^d := \{X \setminus K \ | \ K \in \KS(X)\}$ is a topology on $X$. We call the space with underlying set $X$ and topology $\tau^d$ the {\bf co-compact dual}\footnote{The co-compact dual first appeared as a general topological construction in De Groot \cite{DeGroot1967}. For that reason, some authors refer to it as the ``de Groot dual''.} of $X$, and denote it by $X^d$. 
\begin{thm}[\cite{Jung2004}, Theorem 2.12]
Let $X$ be a stably compact space. Then $X^d$ is stably compact, and $(X^d)^d$ is equal to $X$. In particular, the opens of $X$ are precisely the complements of compact saturated sets of $X^d$.
\end{thm}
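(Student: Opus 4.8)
The plan is to reduce all three assertions to a single identity. Writing $\leq$ for the specialization preorder of $X$ (so that $x \leq y$ iff $x \in \overline{\{y\}}$, the open sets are up-sets, and the saturated sets are precisely the up-sets of $\leq$), I would show that the compact saturated sets of $X^d$ are exactly the $\tau$-closed sets of $X$,
\[
\KS(X^d) = \{\, X \setminus U \mid U \text{ open in } X \,\}. \qquad (\star)
\]
Granting $(\star)$, the ``in particular'' clause is just a restatement, and the opens of $(X^d)^d$ are by definition the complements of the members of $\KS(X^d)$, hence exactly the opens of $X$, giving $(X^d)^d = X$. Throughout I would freely use that the saturated sets are the up-sets of $\leq$ (so closed under arbitrary unions and intersections), that $\emptyset, X \in \KS(X)$ (so $X$ is compact, $\tau^d$ a topology as already noted), that the saturation $\uparrow K$ of a compact $K$ is compact saturated (any open containing $K$ is an up-set, hence contains $\uparrow K$), and that $X$, being sober, locally compact and $T_0$, is well-filtered \cite{GHK2003,Jung2004}.

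First I would compute the specialization order of $X^d$. Since $\{x\}$ is compact, $\uparrow x$ is compact saturated, so $X \setminus \uparrow x \in \tau^d$; using such sets as separating witnesses one checks that $x \leq_{X^d} y$ iff $y \leq x$, i.e. the specialization order of $X^d$ is the reverse of that of $X$. In particular $X^d$ is $T_0$, its saturated sets are the $\leq$-down-sets, and its closed sets are exactly the members of $\KS(X)$. The inclusion $\supseteq$ in $(\star)$ then follows: a $\tau$-closed set $C$ is a down-set, hence $X^d$-saturated; and given a cover of $C$ by basic $\tau^d$-opens $X \setminus K_i$ with $K_i \in \KS(X)$, we have $\bigcap_i K_i \subseteq X \setminus C$, an open set, so by well-filteredness applied to the downward-directed family of finite intersections of the $K_i$ (which lies in $\KS(X)$, as $\KS(X)$ is closed under finite intersections) some finite intersection is already contained in $X \setminus C$, yielding a finite subcover. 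Thus $C$ is $X^d$-compact.

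The reverse inclusion $\subseteq$ in $(\star)$ is the crux. Let $Q \in \KS(X^d)$; then $Q$ is a $\leq$-down-set and I must show $X \setminus Q$ is open in $X$. Fix $x \notin Q$. Since $Q$ is a down-set, $\uparrow x \cap Q = \emptyset$, so for every $q \in Q$ we have $x \not\leq q$, i.e. there is an open $U \ni x$ with $q \notin U$; by local compactness of $X$ choose an open $V$ and a compact saturated $K$ with $x \in V \subseteq K \subseteq U$. Then $q \in X \setminus K \in \tau^d$, while $V \cap (X \setminus K) = \emptyset$ because $V \subseteq K$. As $q$ ranges over $Q$ these sets $X \setminus K$ form a $\tau^d$-open cover of the $X^d$-compact set $Q$; extracting a finite subcover $X \setminus K_1, \dots, X \setminus K_n$ and intersecting the corresponding open sets $V_1, \dots, V_n$ produces an open neighbourhood $V_1 \cap \cdots \cap V_n$ of $x$ disjoint from $Q$. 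Hence $X \setminus Q$ is open, proving $(\star)$. To make the reduction legitimate it remains to check that $X^d$ is stably compact, so that $(X^d)^d$ is defined. By $(\star)$, $\KS(X^d)$ is the family of $\tau$-closed sets, closed under finite intersections, and $X = X \setminus \emptyset \in \KS(X^d)$, so $X^d$ is compact; $T_0$-ness is already known. Local compactness of $X^d$ I would derive by translating through complements: a basic open neighbourhood $X \setminus K$ of $x$ (with $K \in \KS(X)$, $x \notin K$) yields, via an open $W \supseteq K$ omitting $x$ and local compactness of $X$ along a finite subcover of $K$, an open $U'$ and compact saturated $L$ with $K \subseteq U' \subseteq L \subseteq W$, whose complements give the required $X^d$-open/$X^d$-compact interpolant. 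Finally $X^d$ is well-filtered, since a downward-directed family $(Q_j)$ in $\KS(X^d)$ with $\bigcap_j Q_j \subseteq X \setminus K$ gives a $\tau$-open cover $(X \setminus Q_j)$ of the $X$-compact set $K$, and a finite subcover together with directedness produces a single $Q_{j_0} \subseteq X \setminus K$; being $T_0$, locally compact and well-filtered, $X^d$ is sober \cite{GHK2003}. Thus $X^d$ is stably compact and $(X^d)^d = X$ as above.

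The main obstacle is the reverse inclusion in $(\star)$ together with the soberness of $X^d$: both hinge on using local compactness of $X$ in its interpolation form $x \in V \subseteq K \subseteq U$ to convert the abstract $X^d$-compactness of a set into a genuine open-set separation inside $X$, and on the careful interplay between compactness measured in $X$ and in $X^d$ that drives the well-filteredness argument. Once the specialization order is seen to reverse and $(\star)$ is established, the self-duality $(X^d)^d = X$ and the ``in particular'' clause are purely formal.
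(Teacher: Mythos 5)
The paper gives no proof of this theorem at all: it is imported wholesale from Jung \cite{Jung2004}, and, as the surrounding text notes, the cited development rests on Nachbin's theory of compact ordered spaces --- a stably compact space is equivalently a compact pospace (patch topology plus specialization order) with $\tau$ recovered as the open up-sets, and the co-compact dual is then the \emph{same} pospace with the order reversed, which makes the involution $(X^d)^d = X$ essentially immediate. Your argument takes a genuinely different, purely topological route, and as far as I can check it is correct. The pivot is your identity $(\star)$, that $\KS(X^d)$ is exactly the family of $\tau$-closed sets: the inclusion $\supseteq$ uses well-filteredness of $X$ (legitimately available, since sober spaces are well-filtered), and the inclusion $\subseteq$ uses the interpolation form of local compactness of $X$ together with $X^d$-compactness of $Q$ to produce, around each $x \notin Q$, a finite intersection $V_1 \cap \cdots \cap V_n$ of $\tau$-opens missing $Q$; both directions go through, as do the reversal of the specialization order (hence $T_0$-ness of $X^d$), the transfer of local compactness through complements, and well-filteredness of $X^d$, which with $T_0$ and local compactness yields sobriety. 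Two small details you should make explicit: the paper's definition of local compactness produces a compact $K$, not a compact \emph{saturated} one, so replace $K$ by its saturation, which is compact and still contained in the open (hence upward closed) set $U$; and in your local-compactness argument for $X^d$, the interpolant $L$ is a finite union of compact saturated sets, which one should note is again compact saturated. As for what each approach buys: the Nachbin/pospace route of the cited source delivers the self-duality and the patch topology in one stroke, but only after setting up the correspondence with ordered spaces; your route stays entirely within the paper's own definitional framework (open sets and $\KS(X)$ only), needs no ordered-space machinery, and isolates exactly where the nontrivial input sits --- the equivalence of sobriety with well-filteredness for locally compact $T_0$ spaces, invoked once for $X$ and once for $X^d$.
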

\begin{exa}
If $(\E,S)$ is a compsat-basis presentation of a stably compact space $X$, consider the lattice $\D := \{X \setminus e \ | \ e \in \E\}$, ordered by inclusion, and define $R$ on $\D$ by $X \setminus e \, R \, X \setminus e'$ iff $e'Se$. Then $(\D,R)$ is an open-basis presentation of the stably compact space $X^d$, $\E^\op \isom \D$, and $R = S^{-1}$ modulo the lattice isomorphism.

\end{exa}

\begin{rem}[Regarding singly strong vs. doubly strong]
The relation between join-strong vs. meet-strong proximity lattices is clear from the previous example: a meet-strong representation of a stably compact space $X$ corresponds to a join-strong representation of the co-compact dual $X^d$. 

We thus observe that the {\it doubly} strong proximity lattices from Jung and S\"underhauf \cite{Jung1996} simultaneously represent {\it both} the space $X$ and its co-compact dual $X^d$. This is the reason why \cite{Jung1996} needed a rather complicated construction, involving pairs of open and compact sets, in order to obtain the representing lattice from a space (cf. Example~\ref{exa:doublerep} above). By contrast, we will simply use open-basis presentations to represent a space $X$ by a proximity lattice, which will not be doubly strong, but only join-strong. We thus separate the issue of representing $X$ from representing its co-compact dual $X^d$.

To make the same point differently: if one aims to represent the {\it bitopological space} $(X, \tau, \tau^d)$, then we do believe doubly strong proximity lattices are the right choice. Indeed, this is the approach taken in a more general setting in the preprint of Jung and Moshier \cite{Jung2006}. In the present paper, however, we take a `monotopological' perspective. One important reason to pursue this is that here we aim to understand {\it continuous} functions between stably compact spaces, whereas Jung and Moshier \cite{Jung2006} are constrained to {\it bicontinuous} functions (i.e., functions whose inverse image preserves open sets {\it and} compact saturated sets).
\end{rem}

\begin{rem}[Regarding join-strong vs. meet-strong]
In the rest of this paper, we will mainly use open-basis presentations and join-strong proximity lattices with j-morphisms, instead of compsat-basis presentations and meet-strong proximity lattices. This choice is of course completely arbitrary: it is clear that the algebraic theories of join-strong vs. meet-strong proximity lattices are essentially the same. Therefore, in most of what follows, we only give the results for join-strong proximity lattices and j-morphisms, and merely note that similar, order-dual, results hold for meet-strong proximity lattices and m-morphisms.
\end{rem}

\subsection{Round ideals and round filters}\label{sec:filtersandideals}
We have not yet explained how Jung and S\"underhauf \cite{Jung1996} recover a space from its presentation as a proximity lattice; we will discuss this in Section~\ref{sec:duality}. For now, it suffices to say that \cite{Jung1996} generalises Stone duality for distributive lattices and spectral spaces by generalising the notion of (prime) filters and ideals.

\begin{dfn}
A non-empty subset $I \subseteq L$ of a proximity lattice $(\L,R)$ is called a {\bf round ideal} (sometimes $R$-ideal), if it is
\begin{itemize}
\item {\bf $R$-downward closed}: for any $b \in I$, if $aRb$, then $a \in I$.
\item {\bf $R$-updirected}: for any $a, b \in I$, there is $c \in I$ such that $aRc$ and $bRc$.
\end{itemize}
Dually, a {\bf round filter} (or $R$-filter), is a subset $F$ of $L$ which is $R$-upward closed and $R$-downdirected.
\end{dfn}

The following alternative characterisation of round ideals and round filters is a bit less conceptual, but more useful in practice. This characterisation was originally given as the definition in \cite{Jung1996}.
\begin{lem}\label{lem:idealschar}
Let $(\L,R)$ be a proximity lattice. 

A subset $I \subseteq L$ is a round ideal if and only if $R^{-1}[I] = I$ and $I$ contains finite joins of its elements. A subset $F \subseteq L$ is a round filter if and only if $R[F] = F$ and $F$ contains finite meets of its elements. 

In particular, round ideals and round filters are always lattice ideals and lattice filters, respectively.
\end{lem}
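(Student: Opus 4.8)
The plan is to prove the statement for round ideals in full and then obtain the statement for round filters by the order-duality already recorded after the definition of proximity lattice: passing from $(\L,R)$ to $(\L^\op,R^{-1})$ turns round filters into round ideals, turns $R[F]=F$ into $R^{-1}[I]=I$, and interchanges axioms 2 and 3. So I fix a proximity lattice $(\L,R)$ and a subset $I \subseteq L$ and prove the two implications of the equivalence separately. Throughout I read $\bigvee\emptyset = \bot$, so that ``closed under finite joins'' includes $\bot \in I$; this convention is what pins down nonemptiness in the converse direction.

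First suppose $I$ is a round ideal. The property $R$-downward closed says exactly that $a \in I$ whenever $aRb$ for some $b \in I$, i.e. $R^{-1}[I] \subseteq I$. For the reverse inclusion, given $a \in I$ I apply $R$-updirectedness to the pair $(a,a)$ to obtain $c \in I$ with $aRc$, so $a \in R^{-1}[I]$; hence $R^{-1}[I] = I$. For closure under joins, since $I$ is nonempty I may pick $b \in I$, and axiom 2 applied to the empty set gives $\bot R b$, so $\bot \in I$ by downward closure. For a binary join $a \vee b$ with $a,b \in I$, $R$-updirectedness yields $c \in I$ with $aRc$ and $bRc$; axiom 2 applied to $\{a,b\}$ then gives $(a \vee b) R c$, whence $a \vee b \in R^{-1}[I] = I$. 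Arbitrary finite joins follow by induction.

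Conversely, suppose $R^{-1}[I] = I$ and $I$ is closed under finite joins (so in particular $\bot \in I$ and $I \neq \emptyset$). Downward closure is immediate: if $b \in I$ and $aRb$ then $a \in R^{-1}[I] = I$. For $R$-updirectedness, given $a,b \in I$ I have $a \vee b \in I = R^{-1}[I]$, so there is $c \in I$ with $(a \vee b) R c$; axiom 2 applied to $\{a,b\}$ splits this as $aRc$ and $bRc$, so $c$ is the required common element of $I$ above $a$ and $b$ under $R$.

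Finally, for the ``in particular'' clause it remains to check that $I$ is downward closed for the lattice order $\leq$, closure under finite joins being already established. This is the one step that genuinely uses how $R$ interacts with $\leq$, and the point worth flagging; it goes through without assuming $R$ increasing. If $x \leq y \in I$, then $x \vee y = y$, so choosing $c \in I$ with $yRc$ (available since $y \in R^{-1}[I]$) and applying axiom 2 to $\{x,y\}$ gives $xRc$, whence $x \in R^{-1}[I] = I$. The filter half is entirely dual, using axiom 3, $R[F]=F$, and $\top = \bigwedge\emptyset \in F$. No step is really hard; the only places requiring care are the empty-join convention that secures nonemptiness and this last appeal to axiom 2 to derive downward-closure for the lattice order.
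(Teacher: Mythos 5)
Your proof is correct and takes essentially the approach the paper intends: the paper's own ``proof'' merely defers to simple manipulations with the proximity-lattice axioms (pointing to Section 3 of Jung and S\"underhauf), and your argument is exactly those manipulations written out in full. Your additions are sensible bookkeeping rather than a different route: the convention $\bigvee\emptyset=\bot$ that makes the converse direction deliver nonemptiness, the use of axiom 2 (splitting $(x\vee y)\,R\,c$) to get downward closure for the lattice order in the ``in particular'' clause, and the reduction of the filter half to the ideal half via $(\L^\op,R^{-1})$.
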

\begin{proof}
The arguments are simple manipulations using the axioms for a proximity lattices and are very similar to those given in Section 3 of \cite{Jung1996}.
\end{proof}

We can now also give a useful characterisation of proximity morphisms, which shows why they are an interesting and natural class of morphisms: they `lift' to round ideals and filters, in the following sense. 
\begin{lem}\label{lem:proximitymorphismidealsfilters}
Let $(\L,R)$ and $(\M,S)$ be proximity lattices, and $T$ a relation from $L$ to $M$. The following are equivalent.
\begin{enumerate}
\item $T$ is a proximity morphism,
\item For all $a \in L$, $T[a]$ is a round ideal, and for all $b \in M$, $T^{-1}[b]$ is a round filter.
\end{enumerate}
Furthermore, it follows from these conditions that the map $T[\cdot]$ sends round ideals to round ideals, and the map $T^{-1}[\cdot]$ sends round filters to round filters.
\end{lem}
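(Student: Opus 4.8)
The plan is to prove the equivalence (1) $\iff$ (2) by unwinding each of the four proximity-relation axioms for $T^{-1}$ into a statement about the fibres $T[a]$ and $T^{-1}[b]$, and then matching these against the characterisation of round ideals and filters in Lemma~\ref{lem:idealschar}. Viewing $T^{-1}$ as a proximity relation between $(\M,S)$ and $(\L,R)$, I expect the following dictionary. The axiom $S \circ T^{-1} = T^{-1}$ rewrites, using the composition convention, to $S^{-1}[T[a]] = T[a]$ for every $a$, i.e. roundness of each $T[a]$; the join axiom rewrites to closure of $T[a]$ under finite joins (the instance with empty index set giving $\bot \in T[a]$, so $T[a]$ is nonempty). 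Dually, $T^{-1} \circ R = T^{-1}$ gives $R[T^{-1}[b]] = T^{-1}[b]$ and the meet axiom gives closure of $T^{-1}[b]$ under finite meets with $\top \in T^{-1}[b]$. Since these translations are purely formal rewritings of an existential witness, the bulk of this step is careful bookkeeping, and Lemma~\ref{lem:idealschar} then packages roundness-plus-closure into "round ideal" and "round filter".

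The one point needing care is that each axiom is a biconditional, whereas Lemma~\ref{lem:idealschar} only records roundness together with closure under finite joins/meets. The reverse implications---for instance that $\bigvee B \in T[a]$ forces $B \subseteq T[a]$, or that $\bigwedge A \in T^{-1}[b]$ forces $A \subseteq T^{-1}[b]$---are not literally part of that characterisation, but they come for free from the final clause of Lemma~\ref{lem:idealschar}: a round ideal is in particular a lattice ideal, hence $\leq$-downward closed, and $b \leq \bigvee B$; dually for filters. So in the direction (2) $\Rightarrow$ (1) I would first invoke that round ideals/filters are lattice ideals/filters, and only then read off the full biconditionals.

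For the ``Furthermore'' statement I would write $T[I] = \bigcup_{a \in I} T[a]$ for a round ideal $I$ of $(\L,R)$ and verify the two conditions of Lemma~\ref{lem:idealschar}. Roundness, $S^{-1}[T[I]] = T[I]$, is immediate, since relational image distributes over unions and each $S^{-1}[T[a]] = T[a]$. The crux---and the step I expect to be the main obstacle---is closure under finite joins: given $b_1,\dots,b_n \in T[I]$ with witnesses $a_i \in I$ satisfying $a_i T b_i$, I would use the $R$-updirectedness of $I$ together with $R \circ R = R$ to produce a single $a \in I$ with $a_i R a$ for all $i$, and then apply the axiom $T^{-1} \circ R = T^{-1}$ (equivalently, the $R$-monotonicity $a_i R a \Rightarrow T[a_i] \subseteq T[a]$) to conclude $a T b_i$ for each $i$. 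Then $b_1,\dots,b_n$ all lie in the single round ideal $T[a]$, which is closed under finite joins, whence $\bigvee_i b_i \in T[a] \subseteq T[I]$. The statement for $T^{-1}[\cdot]$ on round filters is the order-dual argument, replacing $R$-updirectedness by $S$-downdirectedness and invoking $S \circ T^{-1} = T^{-1}$.
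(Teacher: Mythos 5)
Your proposal is correct and takes exactly the paper's route: the paper's entire proof reads ``Immediate from the definitions and Lemma~\ref{lem:idealschar},'' and your write-up is precisely that unwinding, done carefully. In particular, your two points of attention --- recovering the reverse biconditionals from the fact that round ideals/filters are lattice ideals/filters, and combining $R$-updirectedness (resp.\ $S$-downdirectedness) with $T^{-1}\circ R = T^{-1}$ (resp.\ $S\circ T^{-1} = T^{-1}$) for the ``Furthermore'' clause --- are exactly the details the paper leaves implicit.
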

\begin{proof}
Immediate from the definitions and Lemma~\ref{lem:idealschar}.
\end{proof}

The round ideals of a distributive proximity lattice $(\D,R)$, ordered by inclusion, form a frame, which we denote by $R\Idl(\D)$ and the round filters, ordered by inclusion, form a dual frame (\cite{Jung1996}, Theorem 11). The round ideals and round filters are central to recovering a space from its basis presentation, as the following example indicates.

\begin{exa}
Let $X$ be a stably compact space, and $(\D,R)$ an open-basis presentation of $X$. Then the frame of round ideals of $\D$ is isomorphic to the frame of opens of $X$. The isomorphism sends a round ideal $I$ of basic open sets to the open set $U := \bigcup_{d \in I} d$ of $X$.

Dually, if $(\E,S)$ is a compsat-basis presentation of $X$ then the dual frame of round filters is isomorphic to the dual frame of compact saturated sets of $X$. The isomorphism sends a round filter $F$ to the compact saturated subset $K := \bigcap_{e \in F} e$ of $X$; also see the next subsection.

It follows, because $X$ is sober, and the open-basis presentation is join-strong, that the space of points of the frame of round ideals is isomorphic to $X$. The points of this frame correspond precisely to the prime round filters of the open-basis presentation of $X$. We come back to this point in Example~\ref{exa:primerfiltersarepoints} in Section~\ref{sec:duality}.
\end{exa}

The lattice $R\Idl(\L)$ is categorically a natural object to consider, because it gives a right adjoint to the functor $\F$ from Proposition~\ref{prop:Fisfunctor}. 

\begin{prop}\label{prop:Fhasrightadjoint}
Define the functor $\G : \JSPL \to \Lat$ by sending a join-strong proximity lattice $(\M,S)$ to the lattice $S\Idl(\M)$ of round ideals of $\M$ and a j-morphism $T : (\L,R) \to (\M,S)$ to the homomorphism $\G(T) : \G(\L,R) \to \G(\M,S)$ given by sending a round ideal $I$ to the round ideal $T[I]$. Then $\G$ is a functor which is right adjoint to $\F$.
\end{prop}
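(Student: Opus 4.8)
The plan is to first check that $\G$ is a functor and then produce a bijection $\Phi : \Hom_{\JSPL}(\F\L,(\M,S)) \isom \Hom_{\Lat}(\L,\G(\M,S))$ natural in both arguments. This $\Phi$ sends a j-morphism $T:(\L,\le)\to(\M,S)$ to the assignment $a \mapsto T[a]$, with inverse sending a homomorphism $\phi$ to the relation $\{(a,m) : m\in\phi(a)\}$. On the level of underlying data these two maps are visibly mutually inverse, so all the content is in matching the defining conditions on the two sides. It is worth noting at the outset that $\G\F\L$ is just the ordinary ideal lattice $\Idl(\L)$ --- a $\le$-round ideal is exactly a lattice ideal by Lemma~\ref{lem:idealschar} --- and that under the bijection the unit becomes the principal-ideal map $\eta_\L : a \mapsto {\downarrow}a$; so the whole statement is a proximity-relational refinement of the classical ideal-completion adjunction.

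Functoriality is mostly formal. That $\G(T)$ takes round ideals to round ideals is Lemma~\ref{lem:proximitymorphismidealsfilters}; preservation of identities follows from $R^{-1}[I]=I$ for round ideals (Lemma~\ref{lem:idealschar}), and $\G(U\circ T)=\G(U)\circ\G(T)$ is just the relational identity $U[T[I]]=(T\circ U)[I]$. For the homomorphism property I would first record that meets in $S\Idl(\M)$ are intersections: axiom~(3) for a proximity lattice shows $J_1\cap J_2$ is again round, and the meet axiom for proximity morphisms, which gives $T[a\wedge a']=T[a]\cap T[a']$, upgrades to $T[I_1\cap I_2]=T[I_1]\cap T[I_2]$; the empty-meet instances give preservation of $\top$.

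The heart of the matter, and the one place where join-strongness is really used, is preservation of binary joins. First I must identify the join in $S\Idl(\M)$, and I claim that for round ideals $J_1,J_2$,
\[ J_1\vee J_2 = \{\, m\in M : m\,S\,{\textstyle\bigvee}A \text{ for some finite } A\subseteq J_1\cup J_2 \,\}, \]
this being the smallest round ideal containing $J_1\cup J_2$; verifying it uses only $S\circ S=S$ and the join axiom, not strongness. Granting this, the inclusion $T[I_1]\vee T[I_2]\subseteq T[I_1\vee I_2]$ is immediate because the right-hand side is a round ideal containing $T[I_1]\cup T[I_2]$. The reverse inclusion is precisely a repackaging of join-approximability: an element $m\in T[I_1\vee I_2]$ arises from some $a\in I_1\vee I_2$, which satisfies $a\,R\,\bigvee C$ for a finite $C\subseteq I_1\cup I_2$, whence $(\bigvee C)\,T\,m$ by roundness of $T$; join-approximability then yields a finite $A\subseteq T[C]\subseteq T[I_1]\cup T[I_2]$ with $m\,S\,\bigvee A$, so $m\in T[I_1]\vee T[I_2]$. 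Running the same computation backwards shows that preservation of joins by $a\mapsto T[a]$ forces the relation $T$ to be join-approximable, which is exactly what makes $\Phi$ a bijection and not merely a well-defined map.

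I expect the main obstacle to be twofold: correctly pinning down the join of round ideals in the description above, and keeping the transposition of axioms straight, since a j-morphism $T$ is defined through the requirement that $T^{-1}$ be a join-approximable proximity relation, so every condition must be read in the reversed direction dictated by Remark~\ref{rem:directionmorphisms}. Once the hom-set bijection is in place, naturality in $(\M,S)$ follows from $(T\circ U)[a]=U[T[a]]$ and naturality in $\L$ from ${\downarrow}k(a') = \F(k)[a']$, both routine; naturality in both variables exhibits $\Phi$ as a natural isomorphism of hom-functors, which is the adjunction $\F\dashv\G$.
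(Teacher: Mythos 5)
Your proposal is correct and follows essentially the same route as the paper: the paper's proof sets up exactly the same hom-set bijection $T \mapsto (a \mapsto T[a])$ with inverse $f \mapsto T_f$ (where $a\,T_f\,b$ iff $b \in f(a)$), deduces meet-preservation from the proximity axioms for $T^{-1}$ and join-preservation from join-approximability, and concludes by naturality. Your write-up merely supplies details the paper leaves as ``not hard to check'' (the explicit description of joins of round ideals, functoriality of $\G$, the naturality squares), so it is a more detailed rendering of the same argument rather than a different one.
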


\begin{proof}
It is not hard to check that if $T : (\L,R) \to (\M,S)$ is a j-morphism and $I$ is a round ideal, then $T[I]$ is a round ideal. It is now easy to show that $\G$ is a functor.

To show that $\G$ is right adjoint to $\F$, we need to show that the j-morphisms $(\L,\leq_\L) \to (\M,S)$ naturally correspond to the lattice homomorphisms $\L \to S\Idl(\M)$. 

Given a j-morphism $T : (\L, \leq_\L) \to (\M,S)$, let $f_T$ be the function $\L \to S\Idl(\M)$ which sends $a \in L$ to $T[a]$. It is not hard to prove from the proximity axioms for $T^{-1}$ that $f_T$ is well-defined and preserves finite meets. To show that $f_T$ preserves finite joins, one uses the property that $T^{-1}$ is join-approximable, cf. the proof of Proposition~\ref{prop:Fisfunctor}(2).

Conversely, given a homomorphism $f: \L \to S\Idl(\M)$, we define the relation $T_f \subseteq L \times M$ by $a\,T_f\,b$ iff $b \in f(a)$. The fact that $T_f$ is a j-morphism again follows straightforwardly from the assumption that $f$ is a homomorphism into $S\Idl(\M)$.

The assignments $T \mapsto f_T$ and $f \mapsto T_f$ now constitute a natural bijection between the sets $\JSPL(\F(\L), (\M,S))$ and $\Lat(\L, \G(\M,S))$, so $\F$ is left adjoint to $\G$.
\end{proof}

Note that the category of lattices is in an adjunction with $\JSPL$, but in a {\it dual} adjunction with $\MSPL$, that is, in an adjunction with $\MSPL^\op$. This is a consequence of the direction of the morphisms, which is of course ultimately a matter of definition (cf. Remark~\ref{rem:directionmorphisms}).

\begin{rem}[On increasing proximity lattices]\label{rem:increasing}
As we mentioned right after our definition of proximity lattice, Jung and S\"underhauf \cite{Jung1996} stress that it is not necessary to assume that the relation $R$ of a join-strong distributive proximity lattice $(\L,R)$ is increasing (i.e., contained in the lattice order $\leq$). However, making this assumption does not change the category, up to equivalence: 

\begin{prop}\label{prop:increasingisom}
Every join-strong proximity lattice $(\L,R)$ is j-isomorphic to the {\it increasing} join-strong proximity lattice $(R\Idl(\L),\ll)$, where $\ll$ is the way-below relation in the complete lattice of round ideals of $\L$.
\end{prop}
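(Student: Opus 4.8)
The plan is to exhibit two mutually inverse j-morphisms between $(\L,R)$ and $(R\Idl(\L),\ll)$ built from the principal round ideals. Writing ${\downarrow}a := R^{-1}[a] = \{x \in L : xRa\}$ for the principal round ideal of $a$ (a round ideal by axiom~2 and $R\circ R = R$), I would define $\Phi : (\L,R) \to (R\Idl(\L),\ll)$ by $a\,\Phi\,J$ iff $J \ll {\downarrow}a$, and $\Psi : (R\Idl(\L),\ll) \to (\L,R)$ by $I\,\Psi\,a$ iff $a \in I$, and then show $\Phi \circ \Psi = R^{-1}$ and $\Psi \circ \Phi = {\ll}^{-1}$, so that these relations witness a j-isomorphism.

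Before anything else I must check that $(R\Idl(\L),\ll)$ is itself an increasing join-strong proximity lattice. The key is to recognise $(\L,R)$ as an abstract basis: the single axiom $R \circ R = R$ supplies both transitivity ($R\circ R \subseteq R$) and interpolation ($R \subseteq R \circ R$), and the round ideals coincide with the rounded ideals of this basis, so that $R\Idl(\L)$ is its rounded-ideal completion, a continuous lattice. I would record three facts and use them throughout: directed joins in $R\Idl(\L)$ are unions; finite meets are intersections (here axiom~3 of $(\L,R)$ is used to see that $I\cap J$ is again $R$-updirected); and the way-below relation is
\[ I \ll J \iff \exists c \in J : I \subseteq {\downarrow}c. \]
From these the proximity axioms for $\ll$ follow: axiom~1 is transitivity plus interpolation of $\ll$; axiom~2 is the standard fact that finite joins of elements way-below $J$ are way-below $J$; axiom~3 reduces, via finite-meets-are-intersections, to axiom~3 of $(\L,R)$; and join-strongness follows from continuity by first interpolating $\alpha \ll \bigvee\mathcal{B}$ and then writing each joinand as a directed join of elements way-below it. That $\ll$ is increasing is immediate from the displayed characterisation.

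Next I would check that $\Phi$ and $\Psi$ are j-morphisms. For the proximity-morphism part I would invoke Lemma~\ref{lem:proximitymorphismidealsfilters}: $\Phi[a] = \{J : J \ll {\downarrow}a\}$ is a round ideal of $R\Idl(\L)$ (a principal way-below set in a continuous lattice), and $\Phi^{-1}[J] = \{a : J \ll {\downarrow}a\}$ is a round filter of $\L$ (roundness uses interpolation of $R$, closure under finite meets uses axiom~3 together with the compatibility of $R$ with the order); dually $\Psi[I] = I$ and $\Psi^{-1}[a] = \{I : a \in I\}$ are a round ideal and a round filter respectively. For join-approximability of $\Phi$ the crucial ingredient is the identity $R^{-1}[\bigvee B] = \bigvee_{b \in B} {\downarrow}b$ in $R\Idl(\L)$, which is exactly where join-strongness of $(\L,R)$ enters; granting it, the required approximation is precisely join-strongness of $(R\Idl(\L),\ll)$. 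For $\Psi$, join-approximability follows from the explicit description $\bigvee_k I_k = \{a : \exists \text{ finite } A \subseteq \bigcup_k I_k,\ a R \bigvee A\}$ of a finite join of round ideals.

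Finally I would compute the two composites. Unfolding the definitions, $a\,(\Phi\circ\Psi)\,a'$ holds iff there is $c$ with $cRa$ and $a'Rc$, which by $R \circ R = R$ is exactly $a'Ra$, so $\Phi\circ\Psi = R^{-1}$; and $I\,(\Psi\circ\Phi)\,J$ holds iff some $a \in I$ satisfies $J \ll {\downarrow}a$, which by the way-below characterisation and $R$-roundness of $I$ is exactly $J \ll I$, so $\Psi\circ\Phi = {\ll}^{-1}$. The one genuinely delicate point, and where I expect to spend the most effort, is the continuous-lattice structure of $R\Idl(\L)$ — specifically the interpolation of $\ll$ — since $R$ is not assumed reflexive and so the principal ideals ${\downarrow}a$ need not be $\ll$-compact; this is where the abstract-basis theory, or a direct interpolation argument from the displayed characterisation, does the real work.
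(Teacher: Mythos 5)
Your proposal is correct and follows essentially the same route as the paper: the same characterisation $I \ll J \iff \exists c \in J : I \subseteq R^{-1}[c]$, the same j-morphisms $\Phi$ (defined by $a\,\Phi\,J$ iff $J \ll R^{-1}[a]$) and $\Psi$ (membership), and the same composite identities $\Phi \circ \Psi = R^{-1}$, $\Psi \circ \Phi = {\ll}^{-1}$. The paper leaves the verification that $\Phi$ and $\Psi$ are j-morphisms as ``not hard, but a bit tedious''; your write-up simply supplies those details (abstract-basis/continuity of $R\Idl(\L)$, the identity $R^{-1}[\bigvee B] = \bigvee_{b\in B} R^{-1}[b]$ encoding join-strongness), and they check out.
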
 

\begin{proof}
One may calculate that the way-below relation on $R\Idl(\L)$ says, for round ideals $I$ and $J$, that $I \ll J$ iff there exists $d \in J$ such that $I \subseteq R^{-1}[d]$. 

The j-isomorphism is given by the j-morphisms $\Phi : (\L,R) \to (R\Idl(\L),\ll)$ and $\Psi : (R\Idl(\L),\ll) \to (\L,R)$ defined by $a\,\Phi\,I$ iff $I \ll R^{-1}[a]$ and $I \, \Psi \, a$ iff $a \in I$. It is not hard, but a bit tedious, to check that $\Phi$ and $\Psi$ are indeed j-morphisms. To conclude, note that $\Phi \circ \Psi = R^{-1}$ and $\Psi \circ \Phi = \, \ll^{-1}$.
\end{proof}

Increasing proximity lattices are easier to understand than general proximity lattices. For example, we have the following fact.
\begin{prop}\label{prop:increasingorder}
Suppose $(\L,R)$ is an increasing join-strong proximity lattice. Then $R$ is reflexive if and only if $R$ is equal to the lattice order $\leq_\L$ of $\L$.
\end{prop}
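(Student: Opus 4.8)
The plan is to prove the two inclusions $R \subseteq {\leq_\L}$ and ${\leq_\L} \subseteq R$ separately, after dispatching the easy direction of the biconditional.

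For the direction ($\Leftarrow$), if $R = {\leq_\L}$ then $R$ is reflexive simply because the lattice order is reflexive ($a \leq_\L a$ for every $a$), so there is nothing to prove.

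For ($\Rightarrow$), I would assume $R$ is reflexive and split the required equality $R = {\leq_\L}$ into its two inclusions. The inclusion $R \subseteq {\leq_\L}$ is immediate from the standing hypothesis that $(\L,R)$ is increasing. The substance of the argument is the reverse inclusion ${\leq_\L} \subseteq R$, i.e.\ showing $aRb$ whenever $a \leq_\L b$. The idea is to rewrite the order relation $a \leq_\L b$ as the join identity $a \lor b = b$, so that the join-compatibility axiom for $R$ (axiom 2 in the definition of proximity lattice) becomes applicable. Concretely, I would instantiate that axiom with the finite set $\{a,b\}$ and target element $b$: since $a \leq_\L b$ gives $\bigvee\{a,b\} = a \lor b = b$, the axiom reads
\[ b \, R \, b \iff (a R b \ \text{and}\ b R b). \]
By reflexivity the left-hand side holds, hence so does the right-hand side, and in particular $aRb$, which is exactly what is needed.

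It is worth noting that this argument uses only reflexivity, the increasing hypothesis, and the join axiom; neither join-strongness nor the idempotency axiom $R \circ R = R$ is invoked, and one could equally well run the dual version via the meet axiom together with $a \land b = a$. For this reason I do not expect any genuine obstacle here: the proof is a single line of reasoning, and the only point requiring care is recognising that the inequality $a \leq_\L b$ should be encoded as a lattice identity to which one of the structural axioms applies, so that reflexivity can then close the resulting biconditional.
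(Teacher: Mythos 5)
Your proof is correct and follows essentially the same argument as the paper, just in order-dual form: the paper encodes $a \leq_\L b$ as $a \land b = a$ and applies the meet axiom (using $aRa$ to get $a\,R\,a \land b$, hence $aRb$), whereas you encode it as $a \lor b = b$ and apply the join axiom. Since the two are exact mirror images — as you yourself note — there is no substantive difference in the approach.
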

\begin{proof}
The `if' direction is clear. For `only if', note that we already have $R \, \subseteq \, \leq_\L$ since $(\L,R)$ is assumed to be increasing. For the inclusion $\leq_\L \, \subseteq \, R$, suppose $a \leq_\L b$. Then $a \land b = a$, and since $R$ is reflexive we have $aRa$, so $a\,R\,a\land b$. From the proximity axiom for $\land$, we conclude that $aRb$.
\end{proof}
We will come back to the property of reflexivity and how it can make the theory of proximity lattices collapse in Proposition~\ref{prop:charreflexive}, after we introduce the canonical extension in the next section.
\end{rem}

\section{Canonical extensions of proximity lattices}\label{sec:canext}
Canonical extensions are an alternative way to obtain information about the points of the dual space of a frame $\O$, without explicitly referring to the dual space and therefore avoiding the use of the Axiom of Choice. Using canonical extensions, we can employ the concrete, geometric kind of reasoning from traditional topology in a point-free context. More precisely, the canonical extension $\C$ is a complete lattice which abstractly represents the power set (up-set) lattice of the set of points of an (ordered) topological space. 

Canonical extensions were first introduced for Boolean algebras, whose duals are Stone spaces, in \cite{Jonsson1952}, and then generalized to distributive lattices, whose duals are spectral spaces, in \cite{Gehrke1994}. We show here that this construction can be generalized to distributive proximity lattices, whose duals are stably compact spaces. In fact, we are able to present the material in this section without the assumption that the proximity lattices involved are distributive, for reasons analogous to the observations made in \cite{Gehrke2001} for lattices. Later, in Section~\ref{subsec:connection}, we will show that in the case of a {\it distributive} proximity lattices, where a relatively simple dual space is available, the canonical extension is exactly the lattice of up-sets of its dual space. 

Non-distributive lattices also have dual spaces, as described by Urquhart \cite{Urquhart1978}, but these are considerably more complicated than the dual spaces of distributive lattices. Nonetheless, the result that the canonical extension of a lattice is exactly the complete lattice naturally associated with the dual space still goes through (cf. \cite{Gehrke2001}, Remark 2.10). This raises the natural question whether the duality for distributive proximity lattices could be extended to a duality for proximity lattices. We will leave this as a question for further research, also see the Conclusion.

\subsection{Definition}
We first briefly recall the definition of a canonical extension of a lattice $\L$. 

We call a homomorphism $h : \L \to \C$, where $\C$ is a complete lattice, an {\bf extension} of $\L$.

Given an extension $h : \L \to \C$, we call an element $u \in \C$ a {\bf filter element} (or, also, {\bf closed}), if it can be written as $\bigwedge h[F]$ for some filter $F$ of $\L$, and we call it an {\bf ideal element} (or, also, {\bf open}), if it can be written as $\bigvee h[I]$ for some ideal $I$ of $\L$.

\begin{dfn}\label{def:canextlat}
Let $\L$ be a lattice. We call an extension $h : \L \to \C$ a {\bf canonical extension} of $\L$ if
\begin{enumerate}
\item (dense) For all $u, v \in \C$, if $u \nleq v$, then there exist a filter element $x$ and an ideal element $y$ such that $x \leq u$, $v \leq y$, and $x \nleq y$ in $\C$.
\item (compact) For any subsets $S, T \subseteq L$ such that $\bigwedge h[S] \leq \bigvee h[T]$ in $\C$, there are finite sets $S' \subseteq S$ and $T' \subseteq T$ such that $\bigwedge S' \leq \bigvee T'$ in $\L$.
\end{enumerate}
\end{dfn}
Note that it follows from condition (2) that $h$ is injective. This will not be the case in our more general setting of proximity lattices.

For a proximity lattice version of these definitions, we parametrize the conditions in $R$, as follows.

\begin{dfn}\label{def:canextplat}
Let $(\L,R)$ be a proximity lattice and $h : \L \to \C$ an extension.

We call $u \in \C$ a {\bf round ideal element} (or {\bf round-open}) if there is a round ideal $I$ of $L$ such that $u = \bigvee h[I]$. Dually, $u$ is a {\bf round filter element} (or {\bf round-closed}) if there is a round filter $F$ of $L$ such that $u = \bigwedge h[F]$. We denote the set of round ideal elements of the extension $h$ by $\I_R^h(\C)$, and the set of round filter elements by $\F_R^h(\C)$ (or, when the map $h$ is fixed, we just write $\I_R(\C)$ and $\F_R(\C)$).

We say a function $h : \L \to \C$ is a {\bf $\pi$-canonical extension} of the proximity lattice $(\L,R)$ if
\begin{enumerate}
\item ($R$-dense) For all $u, v \in \C$, if $u \nleq v$, then there exist a round filter element $x$ and a round ideal element $y$ such that $x \leq u$, $v \leq y$, and $x \nleq y$ in $\C$.
\item ($R$-compact) For any subsets $S, T \subseteq L$ such that $\bigwedge h[R[S]] \leq \bigvee h[R^{-1}[T]]$ in $\C$, there are finite sets $S' \subseteq S$ and $T' \subseteq T$ such that $\bigwedge S' R \bigvee T'$ in $\L$.
\item ($R$-join-preserving) For all $a \in \L$, $h(a) = \bigvee \{ h(b) \ | \ bRa \}$.
\end{enumerate}
Dually, $k : \L \to \C$ is a {\bf $\sigma$-canonical extension} of the proximity lattice $(\L,R)$ if it satisfies items 1 and 2 above, and 3':
\begin{enumerate}
\item[3'.] ($R$-meet-preserving) For all $a \in \L$, $k(a) = \bigwedge \{k(b) \ | \ aRb\}$.
\end{enumerate}
\end{dfn}
Note that if an extension $h$ is $R$-join-preserving or $R$-meet-preserving, it follows in both cases that $h$ is {\bf $R$-increasing}, i.e., for all $a, b \in \L$, if $aRb$, then $h(a) \leq h(b)$.

Before showing existence and uniqueness of the canonical extensions in the presence of strongness axioms, we now give some useful alternative characterisations of $R$-denseness and $R$-compactness. 
The reader may recognize these as the proximity-lattice versions of usual lattice-theoretical facts, and the proofs are straight-forward generalisations of these proofs.

\begin{prop}\label{prop:Rcompactchar}
The following are equivalent for any $R$-increasing extension $h: \L \to \C$.
\begin{enumerate}
\item The extension $h$ is $R$-compact,
\item For every round filter $F$ and round ideal $I$ of $\L$ such that $\bigwedge h[F] \leq \bigvee h[I]$ in $\C$, we have $F \cap I \neq \emptyset$.
\end{enumerate}
\end{prop}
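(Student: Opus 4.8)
The plan is to prove the two implications separately, using Lemma~\ref{lem:idealschar} throughout to pass between the characterization of round filters as sets $F$ with $R[F]=F$ that are closed under finite meets (dually for round ideals) and their definition via $R$-upward closure. The direction $(1)\Rightarrow(2)$ is the short one: given a round filter $F$ and a round ideal $I$ with $\bigwedge h[F]\leq\bigvee h[I]$, I would instantiate $R$-compactness at $S:=F$ and $T:=I$. Since $R[F]=F$ and $R^{-1}[I]=I$, the hypothesis $\bigwedge h[R[S]]\leq\bigvee h[R^{-1}[T]]$ of $R$-compactness is literally $\bigwedge h[F]\leq\bigvee h[I]$, so compactness produces finite $S'\subseteq F$ and $T'\subseteq I$ with $\bigwedge S'\,R\,\bigvee T'$. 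As round filters are closed under finite meets and round ideals under finite joins, $\bigwedge S'\in F$ and $\bigvee T'\in I$; and since $F$ is $R$-upward closed, $\bigwedge S'\,R\,\bigvee T'$ forces $\bigvee T'\in F$, so $\bigvee T'\in F\cap I$.

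For $(2)\Rightarrow(1)$ the idea is to manufacture a round filter and a round ideal out of the arbitrary sets $S$ and $T$, by setting
\[ F:=\{c\in L : \bigwedge S_0\,R\,c\ \text{for some finite } S_0\subseteq S\},\qquad I:=\{d\in L : d\,R\,\bigvee T_0\ \text{for some finite } T_0\subseteq T\}. \]
The crux of the argument, and the step I expect to cost the most effort, is checking that $F$ is genuinely a round filter and $I$ a round ideal. The key auxiliary fact, itself immediate from the proximity axioms for $\vee$ and $\wedge$, is that $R$ is order-compatible: $a'\leq a\,R\,b$ implies $a'\,R\,b$, and $a\,R\,b\leq b'$ implies $a\,R\,b'$. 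Closure of $F$ under finite meets then follows by using order-compatibility to replace $\bigwedge S_0$ and $\bigwedge S_1$ by the smaller $\bigwedge(S_0\cup S_1)$ and combining $\bigwedge(S_0\cup S_1)\,R\,c$ with $\bigwedge(S_0\cup S_1)\,R\,c'$ into $\bigwedge(S_0\cup S_1)\,R\,(c\wedge c')$ via the axiom for $\wedge$; the identity $R[F]=F$ is exactly where $R\circ R=R$ enters; and $F$ is non-empty since $\top\in F$ (a vacuous case of the axiom for $\wedge$ gives $\bigwedge S_0\,R\,\top$). The verifications for $I$ are order-dual.

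Once $F$ and $I$ are in place the rest is bookkeeping. Singleton witnesses show $R[S]\subseteq F$ and $R^{-1}[T]\subseteq I$, whence $\bigwedge h[F]\leq\bigwedge h[R[S]]$ and $\bigvee h[R^{-1}[T]]\leq\bigvee h[I]$; chaining these with the standing hypothesis $\bigwedge h[R[S]]\leq\bigvee h[R^{-1}[T]]$ gives $\bigwedge h[F]\leq\bigvee h[I]$. Condition (2) then supplies some $c\in F\cap I$, and unpacking the two defining conditions at this $c$ yields finite $S'\subseteq S$ and $T'\subseteq T$ with $\bigwedge S'\,R\,c$ and $c\,R\,\bigvee T'$; a single use of $R\circ R=R$ delivers $\bigwedge S'\,R\,\bigvee T'$, which is precisely the conclusion of $R$-compactness. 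Throughout, the real leverage comes from $R\circ R=R$ and the order-compatibility of $R$, and no strongness or distributivity hypothesis is needed.
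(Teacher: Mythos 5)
Your proposal is correct and follows essentially the same route as the paper's proof: the same instantiation of $R$-compactness at $S:=F$, $T:=I$ for $(1)\Rightarrow(2)$, and for $(2)\Rightarrow(1)$ exactly the same auxiliary sets $F$ and $I$ built from finite meets of $S$ and finite joins of $T$, followed by the same chaining $\bigwedge S'\,R\,c\,R\,\bigvee T'$ and transitivity. The only difference is that you spell out the verification that $F$ and $I$ are round (via order-compatibility of $R$ and $R\circ R=R$), which the paper dismisses with ``it is not hard to see.''
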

\begin{proof}
Again, the direction (1) $\Rightarrow$ (2) is the easier one: the definition of $R$-compactness gives finite subsets $S'$ of $F$ and $T'$ of $I$ such that $\bigwedge S' \, R \, \bigvee T'$. We then have $\bigvee T' \in I$ because $I$ is an ideal. Also, $\bigwedge S' \in F$ since $F$ is a filter, and then, since $R[F] \subseteq F$, we also have $\bigvee T' \in F$. Hence $\bigvee T' \in F \cap I$.

For the direction (2) $\Rightarrow$ (1), let $S$ and $T$ be subsets of $\L$ such that $\bigwedge h[R[S]] \leq \bigvee h[R^{-1}[T]]$. Consider the sets $F := \{a \in L \ | \ \exists S' \subseteq_\omega S : \bigwedge S' \, R \, a \}$ and $I := \{a \in L \ | \ \exists T' \subseteq_\omega T : a \, R \, \bigvee T'\}$. It is not hard to see that $F$ is a round filter and $I$ is a round ideal. Note also that $R[S] \subseteq F$, and hence $\bigwedge h[F] \leq \bigwedge h[R[S]]$ in $\C$. Similarly, $\bigvee h[R^{-1}[T]] \leq \bigvee h[I]$, so that $\bigwedge h[F] \leq \bigvee h[I]$. By assumption, we can now pick $a \in F \cap I$. From the definitions of $F$ and $I$, we can now pick $S' \subseteq_\omega S$ and $T' \subseteq_\omega T$ such that $\bigwedge S' \, R \, a \, R \, \bigvee T'$, so that $\bigwedge S' \, R \, \bigvee T'$.\qedhere
\end{proof}

\begin{prop}\label{prop:Rdensechar}
The following are equivalent for any extension $h : \L \to \C$.
\begin{enumerate}
\item The extension $h$ is $R$-dense,
\item For any $u \in \C$, $u = \bigvee \{x \ | \ u \geq x \in \F_R^h(\C)\}$ and $u = \bigwedge \{y \ | \ u \leq y \in \I_R^h(\C)\}$.
\end{enumerate}
\end{prop}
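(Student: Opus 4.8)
The plan is to prove both implications directly from the definitions, exploiting the fact that for every $u \in \C$ one automatically has $\bigvee\{x \mid u \geq x \in \F_R^h(\C)\} \leq u \leq \bigwedge\{y \mid u \leq y \in \I_R^h(\C)\}$, since every round filter element in the first set lies below $u$ and every round ideal element in the second set lies above $u$. Condition (2) simply asserts that both of these always-available inequalities are in fact equalities.

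For (1) $\Rightarrow$ (2), I would fix $u$ and argue by contradiction, writing $u_* := \bigvee\{x \mid u \geq x \in \F_R^h(\C)\}$. If $u_* \neq u$, then since $u_* \leq u$ we get $u \nleq u_*$, so $R$-density applied to the pair $(u, u_*)$ yields a round filter element $x \leq u$ and a round ideal element $y$ with $u_* \leq y$ and $x \nleq y$. But $x$ is a round filter element below $u$, so $x \leq u_* \leq y$, contradicting $x \nleq y$; hence $u_* = u$. The second equality $u = \bigwedge\{y \mid u \leq y \in \I_R^h(\C)\}$ follows by the order-dual argument: writing $u^*$ for this meet, one has $u \leq u^*$, and if $u^* \neq u$ then $u^* \nleq u$, so $R$-density applied to the pair $(u^*, u)$ produces a round filter element $x \leq u^*$ and a round ideal element $y$ with $u \leq y$ and $x \nleq y$; but then $u^* \leq y$ by definition of $u^*$, whence $x \leq u^* \leq y$, again a contradiction.

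For (2) $\Rightarrow$ (1), suppose $u \nleq v$. The argument is a two-stage separation. First, using $u = \bigvee\{x \mid u \geq x \in \F_R^h(\C)\}$: if every round filter element $x \leq u$ satisfied $x \leq v$, their join $u$ would also be $\leq v$, contradicting $u \nleq v$; so there is a round filter element $x \leq u$ with $x \nleq v$. Second, using $v = \bigwedge\{y \mid v \leq y \in \I_R^h(\C)\}$: since $x \nleq v$, this same $x$ cannot lie below every round ideal element $y \geq v$, so there is such a $y$ with $x \nleq y$. This pair $x \leq u$, $v \leq y$, $x \nleq y$ witnesses $R$-density.

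There is no serious obstacle here; the content is entirely a matter of tracking the directions of the inequalities and the asymmetric roles of round filter elements (which approximate from below, via joins) and round ideal elements (which approximate from above, via meets). The only point requiring a little care is the second stage of (2) $\Rightarrow$ (1), where one passes from a single round filter element $x$ that is not below $v$ to an actual round ideal separator $y$; this is precisely where the meet-representation of $v$ is invoked, and it is what makes the two equalities in (2) \emph{jointly}, rather than separately, equivalent to $R$-density.
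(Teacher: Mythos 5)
Your proof is correct and is essentially the same argument the paper has in mind: the paper dismisses this proposition as ``a simple rewriting of the definition of $R$-dense,'' and your two implications (separating via the join/meet representations, and conversely contradicting them using a hypothetical failure of density) are exactly that rewriting, carefully spelled out.
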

\begin{proof}
This is a simple rewriting of the definition of $R$-dense.
\end{proof}

\subsection{Existence and uniqueness}
In this section, we will present the canonical extension of a join-strong proximity lattice as a lattice of Galois-closed sets, and show that it is unique up to isomorphism. For this, we first recall some elementary facts about polarities and Galois connections that we will need. We refer the reader to \cite{Gehrke2006} and \cite{Gehrke2010} for details.

\renewcommand{\G}{\mathcal{G}}
\renewcommand{\P}{\mathbb{P}}
A {\bf polarity} is a triple $(X,Y,Z)$ where $X$ and $Y$ are sets and $Z \subseteq X \times Y$. Any polarity gives rise to a pair of functions $(l_Z, r_Z)$ between the posets $\Po(X)$ and $\Po(Y)$, where $l_Z : \Po(X) \to \Po(Y)$ sends $u \subseteq X$ to $\{y \ | \ \forall x \in u : xZy\}$ and $r_Z : \Po(Y) \to \Po(X)$ sends $v \subseteq Y$ to $\{x \ | \ \forall y \in v : xZy\}$. This pair of functions forms a Galois connection, i.e, an adjoint pair from $\Po(X)^\op$ to $\Po(Y)$, since $l_Z(u) \supseteq v$ iff $u \subseteq r_Z(v)$. The composite $c_Z := r_Z \circ l_Z$ is therefore a closure operator on $\Po(X)$, and we denote by $\C := \G(X,Y,Z)$ the complete lattice of closed sets, i.e., $u \subseteq X$ such that $c_Z(u) = u$. We have maps $f : X \to \C$ and $g: Y \to \C$ which are given by $x \mapsto c_Z(\{x\})$ and $y \mapsto r_Z(\{y\})$, respectively.

\begin{thm}\label{thm:polarities}
Let $(X,Y,Z)$ be a polarity. 

\begin{enumerate}
\item The complete lattice $\C := \G(X,Y,Z)$ has the following properties.
\begin{enumerate}
\item For any $u \in \C$, $u = \bigvee \{f(x) \ | \ x \in X, \; f(x) \leq u\}$,
\item For any $u \in \C$, $u = \bigwedge \{g(y) \ | \ y \in Y, \; u \leq g(y)\}$,
\item For any $x \in X$, $y \in Y$, we have $f(x) \leq g(y)$ iff $xZy$.

Moreover, it follows from (a)--(c) that
\item For $x_1, x_2 \in X$, $f(x_1) \leq f(x_2)$ iff $\forall y \in Y (x_2 \, Z \, y \rightarrow x_1 \, Z \, y)$,
\item For $y_1, y_2 \in Y$, $g(y_1) \leq g(y_2)$ iff $\forall x \in X ( x \, Z \, y_1 \rightarrow x \, Z \, y_2)$,
\item For $y \in Y$, $x \in X$, $g(y) \leq f(x)$ iff $\forall x' \in X, y' \in Y ( x' \, Z \, y \land x \, Z \, y' \rightarrow x' \, Z \, y')$.
\end{enumerate}

\item If $\C'$ is a complete lattice and $f' : X \to \C'$, $g' : Y \to \C'$ are functions such that properties (a)--(c) from (1) also hold for $\C'$, $f'$ and $g'$, then there is a unique complete lattice isomorphism $\phi : \C' \to \C$ such that $\phi \circ f' = f$ and $\phi \circ g' = g$.

\item Let $\Q = (X \sqcup Y, \preccurlyeq)$ be the pre-order defined by items (c)--(f) of (1). Then the Dedekind-MacNeille completion $\C''$ of $\Q$, together with the natural inclusion maps of $f'' : X \to \C''$ and $g'' : Y \to \C''$, satisfies (a)--(c) of item (1), and hence, in particular, it is uniquely isomorphic to $\C$.
\end{enumerate}
\end{thm}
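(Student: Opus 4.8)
The plan is to prove the three parts in order, since each builds on the previous one. For part~(1), properties~(a), (b) and (c) are the core facts about Galois-closed sets, and I would verify them directly from the definitions of $l_Z$, $r_Z$, $c_Z$, $f$ and $g$. For~(c), the computation $f(x) \leq g(y) \iff c_Z(\{x\}) \subseteq r_Z(\{y\}) \iff x \in r_Z(\{y\}) \iff xZy$ is immediate once one notes that $c_Z(\{x\})$ is the smallest closed set containing $x$, and that $r_Z(\{y\})$ is already closed. For~(a), every closed $u$ equals $c_Z$ of itself, hence the union of the singleton closures $c_Z(\{x\}) = f(x)$ for $x \in u$; since $f(x) \leq u$ exactly when $x \in u$, this gives the displayed join. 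Property~(b) is the formally dual statement, obtained by unfolding $u = r_Z(l_Z(u)) = \bigcap_{y \in l_Z(u)} r_Z(\{y\})$ and observing $u \leq g(y) \iff y \in l_Z(u)$. The derived properties~(d)--(f) then follow purely order-theoretically: each is obtained by combining the representations in~(a) and~(b) with~(c), so that, for instance, $f(x_1) \leq f(x_2)$ iff $f(x_1) \leq g(y)$ for every $y$ with $f(x_2) \leq g(y)$, which by~(c) is exactly the stated condition on $Z$.

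For part~(2), I would use the representations to pin down the isomorphism. Given $f'$, $g'$ on $\C'$ satisfying~(a)--(c), the only possible candidate is $\phi(u') := \bigvee\{f(x) \mid f'(x) \leq u'\}$, with inverse $\psi(u) := \bigvee\{f'(x) \mid f(x) \leq u\}$. The key point is that because both lattices satisfy~(a)--(c), and since~(d)--(f) are forced by~(a)--(c), the order relations among the generators $f(x), g(y)$ agree with those among $f'(x), g'(y)$; hence $\phi$ and $\psi$ are mutually inverse, order-preserving, and complete-lattice homomorphisms, and $\phi \circ f' = f$, $\phi \circ g' = g$ by construction. Uniqueness follows because any isomorphism commuting with $f'$ and $g'$ is determined on the join-dense set of $f$-images by~(a).

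For part~(3), the plan is to show that the Dedekind--MacNeille completion $\C''$ of $\Q = (X \sqcup Y, \preccurlyeq)$, with the canonical maps, satisfies~(a)--(c), and then invoke part~(2). Recall that in the completion of a poset every element is both a join of elements below it from the image and a meet of elements above it; restricting to the images of $X$ and $Y$ and using that $\preccurlyeq$ was defined precisely so that~(c)--(f) hold, one checks that~(a) and~(b) survive the completion and that~(c) is immediate from the definition of $\preccurlyeq$. I expect the main obstacle to be part~(3): one must verify that the join-density of $f''[X]$ and meet-density of $g''[Y]$ genuinely hold in $\C''$ rather than just density of $X \sqcup Y$ as a whole, which requires checking that every cut is already cut out by the $X$-elements below and the $Y$-elements above it — this is where the bookkeeping of the mixed relations~(d)--(f) encoded in $\preccurlyeq$ does the real work. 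Once~(a)--(c) are secured, the unique isomorphism with $\C$ is handed to us by part~(2).
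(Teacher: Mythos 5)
Your plan is correct in all three parts, but there is nothing in the paper to compare it against: the paper's entire ``proof'' of this theorem is a citation to Section 2 of \cite{Gehrke2006}, so what you have written is a reconstruction of the standard argument from the literature rather than an alternative to an in-paper argument. Your verifications of (a)--(c) and the purely order-theoretic derivations of (d)--(f) are exactly the standard ones and are sound. Two places where your plan is compressed and the details matter. First, in part (2), the claim that ``the order relations among the generators agree, hence $\phi$ and $\psi$ are mutually inverse'' skips the real content: agreement of the order on generators is not by itself sufficient; one needs the density properties (a)--(b) in \emph{both} lattices. The clean way to finish with your candidate $\phi(u') = \bigvee\{f(x) \mid f'(x) \leq u'\}$ is to prove the key equivalence $f(x) \leq \phi(u')$ iff $f'(x) \leq u'$: first show $\phi(u') \leq g(y)$ iff $u' \leq g'(y)$ (using (a) and (c) in both lattices), then apply (b) in both lattices; mutual inverseness and $\phi \circ f' = f$, $\phi \circ g' = g$ then follow from (a) and (d). Second, in part (3) you correctly isolate the genuine issue --- join-density of the image of $X$ alone, not merely of $X \sqcup Y$ --- and this does hold: if $q \in X \sqcup Y$ is a $\preccurlyeq$-upper bound of $\{x \in X \mid x\,Z\,y\}$, then $y \preccurlyeq q$ follows by unwinding (c) and (e) when $q \in Y$, and (d) and (f) when $q \in X$; dually for meet-density of the $Y$-part. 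With those computations filled in, your outline compiles into a complete and correct proof of the statement the paper delegates to the literature.
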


\begin{proof}
See, for example, Section 2 of \cite{Gehrke2006}.
\end{proof}

In order to construct the $\pi$-canonical extension of a join-strong proximity lattice, we now associate a polarity $(X,Y,Z)$ to a proximity lattice $(\L,R)$, as follows. 

Let $X$ be the set of round filters of $\L$ and $Y$ the set of round ideals of $\L$. Define the relation $Z$ from $X$ to $Y$ by $F \, Z \, I$ iff $F \cap I \neq \emptyset$. Let $\C := \G(X,Y,Z)$ be the associated complete lattice, and let $h : \L \to \C$ be the function given by $h(a) := g(R^{-1}[a])$. 

We will now show in a few steps that $h : \L \to \C$ is indeed a $\pi$-canonical extension of $(\L,R)$.

Note first of all that $h(a) = \{F : F \cap R^{-1}[a] \neq \emptyset\} = \{F : a \in F\}$, and hence in particular that $h$ is $R$-increasing. Also, $h : \L \to \C$ really is an extension:

\begin{lem}\label{lem:hishom}
If $(\L,R)$ is a join-strong proximity lattice, then $h : \L \to \C$ defined above is a homomorphism.
\end{lem}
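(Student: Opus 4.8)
My plan is to work throughout with the explicit description $h(a)=\{F\in X : a\in F\}$ already recorded above, and with the fact that in the lattice $\C$ of Galois-closed subsets of $X$ the order is inclusion, arbitrary meets are computed as intersections, the top element is $X$, and the bottom element is $c_Z(\emptyset)$. Preservation of finite meets (including the empty meet, giving $h(\top)=X$) is then immediate from Lemma~\ref{lem:idealschar}: a round filter is in particular a lattice filter, so for finite $B\subseteq L$ one has $\bigwedge B\in F$ iff $b\in F$ for every $b\in B$. Hence $h(\bigwedge B)=\bigcap_{b\in B}h(b)=\bigwedge_\C h[B]$. So the only real content is preservation of finite joins (including the empty join, i.e.\ $\bot$-preservation).

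Fix a finite $A\subseteq L$; I must show $h(\bigvee A)=\bigvee_\C h[A]=c_Z\bigl(\bigcup_{a\in A}h(a)\bigr)$. One inclusion is formal: since $a\leq\bigvee A$ and round filters are upward closed (Lemma~\ref{lem:idealschar}), each $h(a)$ is contained in the closed set $h(\bigvee A)$, and therefore so is the closure $c_Z(\bigcup_{a\in A}h(a))$. For the reverse inclusion I first compute the polar $l_Z$. The key observation is
\[ l_Z(h(a)) \subseteq \{I \in Y : R^{-1}[a] \subseteq I\} . \]
To prove it, suppose $I\in l_Z(h(a))$, i.e.\ every round filter containing $a$ meets $I$, and take $x$ with $xRa$. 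Then $R[x]$ is a round filter containing $a$ (by Lemma~\ref{lem:idealschar}, using $R\circ R=R$ and the axiom for meets), so $R[x]\cap I\neq\emptyset$; choosing $c\in R[x]\cap I$ gives $xRc$ with $c\in I$, and $R$-downward closure of the round ideal $I$ yields $x\in I$. Since $l_Z$ sends unions to intersections, every $I\in l_Z(\bigcup_{a\in A}h(a))$ contains $R^{-1}[A]=\bigcup_{a\in A}R^{-1}[a]$.

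It now suffices to show that if $\bigvee A\in F$, then $F$ meets every round ideal $I\supseteq R^{-1}[A]$; unwinding the definition of $c_Z=r_Z\circ l_Z$, this gives exactly $F\in c_Z(\bigcup_{a\in A}h(a))$. This step is the crux of the proof and the place where join-strongness is indispensable. Since $F$ is round, there is $e\in F$ with $e\,R\,\bigvee A$; the join-strongness axiom (4) then produces a finite $B'\subseteq R^{-1}[A]$ with $e\,R\,\bigvee B'$. As $B'\subseteq I$ and $I$ is a lattice ideal, $\bigvee B'\in I$; and since $e\in F$ with $F=R[F]$, also $\bigvee B'\in F$, so $\bigvee B'\in F\cap I$. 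Combining the two inclusions gives $h(\bigvee A)=\bigvee_\C h[A]$. The empty case $A=\emptyset$ is handled uniformly: $l_Z(\emptyset)=Y$, the join-strong step degenerates to $\bigvee\emptyset=\bot$, and since every round ideal and the relevant filters contain $\bot$ one obtains $h(\bot)=c_Z(\emptyset)$, the bottom of $\C$. The main obstacle is precisely the displayed reduction to round ideals above $R^{-1}[A]$ together with axiom (4); without join-strongness the principal round ideal $R^{-1}[\bigvee A]$ need not be generated by $R^{-1}[A]$, and join-preservation can fail.
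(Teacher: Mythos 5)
Your proof is correct and takes essentially the same route as the paper's: both arguments reduce join-preservation to producing an element of $F \cap I$ for an arbitrary round ideal $I$ in the polar of $\bigcup_{a \in A} h(a)$, using that sets of the form $R[x]$ are round filters (to force elements of $R^{-1}[A]$ into $I$) and then combining roundness of $F$ (interpolation $e \, R \, \bigvee A$) with axiom (4) to obtain the witness $\bigvee B' \in F \cap I$. The differences are purely organizational: the paper packages axiom (4) as a reformulation about round filters and treats binary joins, $\top$, and $\bot$ separately, whereas you compute the polar $l_Z(h(a))$ explicitly and handle all finite joins, including the empty one, uniformly.
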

\begin{proof}
Since $a \wedge b \in F$ iff $a \in F$ and $b \in F$, it is clear that $h$ preserves binary meets. Also, $h(\top_\L) = \top_\C$ because any round filter contains $\top_\L$, and $h(\bot_\L) = c_Z(\emptyset) = \bot_\C$.

To show that $h$ preserves binary joins, we will need the following equivalent formulation of join-strongness: for any round filter $F$, if $a \vee b \in F$, then there are $a'$, $b'$ such that $a'Ra$, $b'Rb$, and $a' \vee b' \in F$. To see that this condition is equivalent to join-strongness, notice that it is sufficient because any set $R[x]$ is a round filter, and it is also not hard to see that it is necessary.

Since $h$ preserves meets, it preserves order, so we only need to show that $h(a \lor b) \leq h(a) \lor h(b)$. 

Take any $F \in h(a \lor b)$, i.e., $a \lor b \in F$. We need to show that $F \in c_Z(h(a) \cup h(b)) = h(a) \vee h(b)$. Take an arbitrary $I \in l_Z(h(a) \cup h(b))$. Pick $a'$ and $b'$ as in the above reformulation of join-strongness. Then $a \in R[a']$, so $R[a'] \cap I \neq \emptyset$, and therefore $a' \in I$. Similarly, $b' \in I$. We conclude that $a' \lor b' \in I$, so $F \cap I \neq \emptyset$, as we needed to show.
\end{proof}

We now would like to identify the round filter and round ideal elements in the extension $h : \L \to \C$, and the following lemma will help us to do so. We believe this lemma could be viewed as a consequence of a more general fact about the construction of a complete lattice out of a polarity, where the sets $X$ and $Y$ in the polarity themselves have additional lattice structure, as is the case here. For our purposes, we just prove it directly.
\begin{lem}
\label{lem:hpreservesfiltersandideals}
For any round filter $F$, $f(F) = \bigwedge h[F]$, and for any round ideal $I$, $g(I) = \bigvee h[I]$. Hence, $f[X] \subseteq \F_R^h(\C)$ and $g[Y] \subseteq \I_R^h(\C)$.
\end{lem}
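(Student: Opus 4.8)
The plan is to prove the two identities separately, since by definition of $\F_R^h(\C)$ and $\I_R^h(\C)$ the second sentence follows immediately once we establish the first. I would first unwind the definitions: recall that $g(I) = r_Z(\{I\}) = \{F \in X : F\,Z\,I\} = \{F : F \cap I \neq \emptyset\}$, that $f(F) = c_Z(\{F\})$, and that from Lemma~\ref{lem:hishom} we know $h(a) = \{F : a \in F\}$. The join and meet in $\C$ are computed as $\bigvee S = c_Z(\bigcup S)$ and $\bigwedge S = \bigcap S$ (intersection of closed sets is closed, so meets are just intersections). These observations turn both claims into concrete set-theoretic statements about subsets of $X$.

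For the ideal identity $g(I) = \bigvee h[I]$, I would show the two inclusions. Since every $a \in I$ has $h(a) = \{F : a \in F\} \subseteq \{F : F \cap I \neq \emptyset\} = g(I)$, and $g(I)$ is closed, we get $\bigvee h[I] = c_Z\big(\bigcup_{a \in I} h(a)\big) \subseteq g(I)$. For the reverse inclusion I would use Theorem~\ref{thm:polarities}(1)(b): since $g(I)$ is an element of $\C$, it equals the meet of all $g(J)$ above it, so it suffices to show that any round ideal $J$ with $\bigvee h[I] \leq g(J)$ also satisfies $g(I) \leq g(J)$. Using part (e) of Theorem~\ref{thm:polarities}, $g(I) \leq g(J)$ amounts to the implication $F\,Z\,I \Rightarrow F\,Z\,J$ for all round filters $F$; and the hypothesis $\bigvee h[I] \leq g(J)$ means exactly $h(a) \subseteq g(J)$ for every $a \in I$, i.e., every principal-type filter meeting $I$ meets $J$. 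The key point is that if $F \cap I \neq \emptyset$, picking $a \in F \cap I$ gives $F \in h(a) \subseteq g(J)$, hence $F\,Z\,J$; this is precisely what is needed.

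The filter identity $f(F) = \bigwedge h[F]$ is handled dually, but here it is cleaner to argue directly with closed sets. The meet $\bigwedge h[F] = \bigcap_{a \in F} h(a) = \{G \in X : F \subseteq G\}$, the set of round filters containing $F$. I would show this set equals $f(F) = c_Z(\{F\}) = r_Z(l_Z(\{F\}))$. Unwinding, $l_Z(\{F\}) = \{I \in Y : F \cap I \neq \emptyset\}$, and then $r_Z(l_Z(\{F\})) = \{G \in X : \forall I\ (F \cap I \neq \emptyset \Rightarrow G \cap I \neq \emptyset)\}$. So the claim reduces to the equivalence, for round filters $F, G$, that $F \subseteq G$ if and only if every round ideal meeting $F$ also meets $G$. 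The forward direction is immediate.

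The main obstacle is this backward direction, which is where the lattice structure on $X$ and $Y$ genuinely enters. The natural strategy is contrapositive: if $F \not\subseteq G$, produce a round ideal $I$ meeting $F$ but not $G$. Given $a \in F \setminus G$, the natural candidate is the round ideal $R^{-1}[a] = \{c : c\,R\,a\}$, which is round by Lemma~\ref{lem:idealschar} (it equals $I$ with $R^{-1}[I] = I$ once we check $a$ generates it appropriately). Here I expect to need $R$-updirectedness of $G$ together with the proximity axioms to argue that if $R^{-1}[a]$ met $G$, say $c \in G$ with $c\,R\,a$, then $R$-upward closure of $G$ (Lemma~\ref{lem:idealschar}, $R[G] = G$) would force $a \in G$, a contradiction. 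Verifying that $R^{-1}[a]$ genuinely meets $F$ — for which one needs $a \in R[F] = F$ to supply some $c \in F$ with $c\,R\,a$, i.e.\ the roundness $R[F]=F$ of $F$ — is the delicate bookkeeping step, and I would carry it out carefully using exactly the characterisations of round filters and ideals from Lemma~\ref{lem:idealschar}.
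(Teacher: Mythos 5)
Your proof is correct, but it is organized differently from the paper's. The paper proves the \emph{filter} identity first: the inequality $f(F) \leq \bigwedge h[F]$ comes from $F \, Z \, R^{-1}[a]$ together with Theorem~\ref{thm:polarities}(1)(c), and the converse inequality uses join-density of $f[X]$ (Theorem~\ref{thm:polarities}(1)(a)): writing $u := \bigwedge h[F]$ as a join of elements $f(F')$, one checks $F \subseteq F'$ for each joinand and concludes $f(F') \leq f(F)$ by antitonicity of the Galois maps; the ideal identity is then dismissed as order-dual. Your treatment of the \emph{ideal} identity --- meet-density of $g[Y]$ (Theorem~\ref{thm:polarities}(1)(b)) plus the criterion (1)(e) for $g(I) \leq g(J)$ --- is exactly the order-dual of that argument, so there you coincide with the paper. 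Where you genuinely diverge is the filter identity: instead of invoking density, you compute both sides explicitly as subsets of $X$ (using that meets in a closure system are intersections and that $h(a) = \{G : a \in G\}$), reducing the claim to the separation statement that $F \subseteq G$ if and only if every round ideal meeting $F$ meets $G$, whose nontrivial direction you settle with the witness ideal $R^{-1}[a]$ and roundness $R[F] = F$, $R[G] = G$. This is more elementary and makes visible exactly where roundness enters, at the cost of needing the backward separation direction, which the paper's density argument never requires (the paper only uses the easy forward direction, via the same witness ideals). Two small points of hygiene: your appeal to ``$R$-updirectedness of $G$'' is superfluous --- only $R$-upward closure $R[G] \subseteq G$ is used to derive the contradiction --- and the verification that $R^{-1}[a]$ is a round ideal (via $R \circ R = R$ and the finite-join axiom, or Lemma~\ref{lem:idealschar}) should be stated cleanly, though it is implicit in the paper as well, since $g$ is only defined on round ideals and the paper applies it to $R^{-1}[a]$ throughout.
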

\begin{proof}
Take a round filter $F$. If $a \in F$, then $F \, Z \, R^{-1}[a]$, so $f(F) \leq g(R^{-1}[a]) = h(a)$. So $f(F) \leq \bigwedge h[F]$. For the other inequality, write $u := \bigwedge h[F]$. Then $u = \bigvee \{f(F') \ | \ f(F') \leq u\}$. So, to show that $u \leq f(F)$, take an arbitrary $F'$ with $f(F') \leq u$. Then $f(F') \leq h(a)$ for all $a \in F$, so $F \subseteq F'$. Therefore, any round ideal $I$ which intersects $F$ intersects $F'$, in other words, $f(F') \leq f(F)$.

The proof of the second statement is dual.
\end{proof}

By definition, any round filter element of $\C$ is the meet of the $h$-image of some round filter. Combining this with the above Lemma, we see that the image of $X$ under $f$ is {\it equal} to the set of round-closed elements, and the image of $Y$ is equal to the set of round-open elements.

\begin{prop}\label{prop:existencesigmaextension}
Let $(\L,R)$ be a join-strong proximity lattice. Then $h : \L \to \C$ is a $\pi$-canonical extension of $\L$.
\end{prop}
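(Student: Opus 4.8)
The plan is to observe that $h$ is already known to be an extension, i.e.\ a homomorphism into the complete lattice $\C$, by Lemma~\ref{lem:hishom}. Hence only the three defining conditions of Definition~\ref{def:canextplat} remain to be checked, and my strategy is to discharge each of them by feeding the identifications from Lemma~\ref{lem:hpreservesfiltersandideals} into the abstract polarity facts of Theorem~\ref{thm:polarities}. Two background observations make everything fit together: from the discussion immediately following Lemma~\ref{lem:hpreservesfiltersandideals} we have $f[X] = \F_R^h(\C)$ and $g[Y] = \I_R^h(\C)$, and we already noted that $h$ is $R$-increasing.

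For ($R$-join-preserving), I would first check that $R^{-1}[a] = \{b : bRa\}$ is a round ideal: by Lemma~\ref{lem:idealschar} this amounts to $R^{-1}[R^{-1}[a]] = R^{-1}[a]$, which is exactly axiom (1) $R \circ R = R$, together with closure under finite joins, which is axiom (2) (including the empty join $\bot\, R\, a$). Then, since $h(a) = g(R^{-1}[a])$ by definition and $g(I) = \bigvee h[I]$ for any round ideal $I$ by Lemma~\ref{lem:hpreservesfiltersandideals}, we obtain $h(a) = \bigvee h[R^{-1}[a]] = \bigvee \{h(b) : bRa\}$ at once. For ($R$-dense), I would invoke Proposition~\ref{prop:Rdensechar} and reduce the condition to showing that every $u \in \C$ is the join of the round filter elements below it and the meet of the round ideal elements above it. But Theorem~\ref{thm:polarities}(1)(a),(b) say precisely that $u = \bigvee \{f(F) : f(F) \leq u\}$ and $u = \bigwedge \{g(I) : u \leq g(I)\}$, and under the identifications $f[X] = \F_R^h(\C)$, $g[Y] = \I_R^h(\C)$ this is exactly the statement of Proposition~\ref{prop:Rdensechar}(2).

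For ($R$-compact), since $h$ is $R$-increasing I may use the equivalent characterisation in Proposition~\ref{prop:Rcompactchar}, so it suffices to show that $\bigwedge h[F] \leq \bigvee h[I]$ forces $F \cap I \neq \emptyset$, for any round filter $F$ and round ideal $I$. By Lemma~\ref{lem:hpreservesfiltersandideals} we have $\bigwedge h[F] = f(F)$ and $\bigvee h[I] = g(I)$, and by Theorem~\ref{thm:polarities}(1)(c) the inequality $f(F) \leq g(I)$ holds iff $F \, Z \, I$, which by the definition of $Z$ means exactly $F \cap I \neq \emptyset$.

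Since each of the three conditions reduces directly to a lemma proved earlier in the section, I do not expect a substantial obstacle here; the proposition is essentially a bookkeeping assembly of the preceding results. The only points that require genuine (if routine) care are verifying that $R^{-1}[a]$ is indeed a round ideal, so that Lemma~\ref{lem:hpreservesfiltersandideals} applies to it, and confirming that the $R$-increasing hypothesis needed to apply Proposition~\ref{prop:Rcompactchar} is in force, which it is by the remark made just before Lemma~\ref{lem:hishom}.
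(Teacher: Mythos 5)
Your proposal is correct and follows essentially the same route as the paper: homomorphism from Lemma~\ref{lem:hishom}, then each of $R$-denseness, $R$-compactness and $R$-join-preservation discharged via Theorem~\ref{thm:polarities} together with Lemma~\ref{lem:hpreservesfiltersandideals} and Propositions~\ref{prop:Rcompactchar}/\ref{prop:Rdensechar}. The only (harmless) divergence is in the $R$-join-preservation step, where you deduce $h(a) = \bigvee h[R^{-1}[a]]$ from the identity $h(a) = g(R^{-1}[a])$ and Lemma~\ref{lem:hpreservesfiltersandideals} --- after rightly checking that $R^{-1}[a]$ is a round ideal --- whereas the paper gives a short direct argument chasing round filters; both are valid.
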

\begin{proof}
We showed that $h$ is a homomorphism in Lemma~\ref{lem:hishom}. We check that $h$ has the remaining properties.
\begin{enumerate}
\item $R$-dense.

By Lemma~\ref{lem:hpreservesfiltersandideals}, it suffices to show that the image of $X$ join-generates $\C$ and the image of $Y$ meet-generates $\C$.  Items (a) and (b) of Theorem~\ref{thm:polarities}(i) say precisely this.

\item $R$-compact.

Item (c) of Theorem~\ref{thm:polarities}(i) yields that for any polarity $(X,Y,Z)$, if $x \in X$ and $y \in Y$, then $x \leq y$ in $\G(X,Y,Z)$ if and only if $xZy$. In particular, in our case, if $F$ is a round filter and $I$ is a round ideal, then $\bigwedge F \leq \bigvee I$ in $\G(X,Y,Z)$ implies that $F \cap I \neq \emptyset$, which, by Proposition~\ref{prop:Rcompactchar}, is equivalent to $R$-compactness.

\item $R$-join-preserving.

Take any $a \in \L$. We need to show that $h(a) = \bigvee\{h(b) \ | \ bRa\}$.

If $bRa$, then any $F$ containing $b$ contains $a$, so $h(b) \leq h(a)$, so the join is below $h(a)$.

Conversely, if $F \in h(a)$, that is, $a \in F$, then there is some $b \in F$ such that $bRa$. We conclude that $F \in h(b)$, which is below the join.\qedhere
\end{enumerate}

\end{proof}

We now also prove uniqueness of the $\pi$-extension. 
\begin{prop}\label{prop:uniquenesspiextension}
If $(\L,R)$ is a join-strong proximity lattice and $h' : \L \to \C'$ is a $\pi$-canonical extension of $\L$, then there exists a complete lattice isomorphism $\phi : \C' \to \C$ such that $\phi \circ h' = h$.
\end{prop}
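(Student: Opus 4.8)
The plan is to deduce this from the uniqueness clause, Theorem~\ref{thm:polarities}(2), applied to the very polarity $(X,Y,Z)$ used to build $\C$. Concretely, I would equip the given complete lattice $\C'$ with two maps out of $X$ and $Y$: define $f' : X \to \C'$ by $f'(F) := \bigwedge h'[F]$ for a round filter $F$, and $g' : Y \to \C'$ by $g'(I) := \bigvee h'[I]$ for a round ideal $I$. By the very definition of round filter and round ideal elements, the image $f'[X]$ is exactly $\F_R^{h'}(\C')$ and $g'[Y]$ is exactly $\I_R^{h'}(\C')$. It then suffices to verify that $\C'$, $f'$, $g'$ satisfy properties (a)--(c) of Theorem~\ref{thm:polarities}(1), for then the theorem yields a unique complete lattice isomorphism $\phi : \C' \to \C$ with $\phi \circ f' = f$ and $\phi \circ g' = g$.

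For property (c), I need $f'(F) \leq g'(I)$ iff $F \, Z \, I$, i.e. iff $F \cap I \neq \emptyset$. The easy direction is immediate: if $a \in F \cap I$ then $f'(F) \leq h'(a) \leq g'(I)$. For the converse, $f'(F) \leq g'(I)$ unwinds to $\bigwedge h'[F] \leq \bigvee h'[I]$, and since $h'$ is $R$-increasing (being $R$-join-preserving), Proposition~\ref{prop:Rcompactchar} converts the $R$-compactness of $h'$ into exactly the statement that this inequality forces $F \cap I \neq \emptyset$. Properties (a) and (b) are handled by $R$-denseness: Proposition~\ref{prop:Rdensechar} rewrites $R$-denseness of $h'$ as the two statements that every $u \in \C'$ is the join of the round filter elements below it and the meet of the round ideal elements above it. Since these elements are precisely $f'[X]$ and $g'[Y]$, this is verbatim (a) and (b).

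Having obtained $\phi$, the final step is to check $\phi \circ h' = h$. Here I would use that $R^{-1}[a] = \{b : bRa\}$ is a round ideal for each $a$ (it is $R$-downward closed by $R \circ R = R$ and closed under finite joins by the join axiom), so that $g'(R^{-1}[a]) = \bigvee \{h'(b) : bRa\}$, which equals $h'(a)$ precisely because $h'$ is $R$-join-preserving. Since $h(a) = g(R^{-1}[a])$ by construction, applying $\phi \circ g' = g$ gives $\phi(h'(a)) = \phi(g'(R^{-1}[a])) = g(R^{-1}[a]) = h(a)$, as required.

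The genuinely substantive inputs are the characterisations already proved: the whole argument is an assembly job in which $R$-compactness (via Proposition~\ref{prop:Rcompactchar}) gives (c), $R$-denseness (via Proposition~\ref{prop:Rdensechar}) gives (a)--(b), and $R$-join-preservation is what reconciles $h'$ with $g'$ in the closing identity. The only point demanding care is confirming that the round filter and round ideal \emph{elements} of $\C'$ coincide with the images $f'[X]$ and $g'[Y]$, but this holds by definition, so I do not expect a real obstacle beyond this bookkeeping.
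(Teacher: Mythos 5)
Your proposal is correct and follows essentially the same route as the paper's own proof: defining $f'(F) := \bigwedge h'[F]$ and $g'(I) := \bigvee h'[I]$, verifying conditions (a)--(c) of Theorem~\ref{thm:polarities}(1) via $R$-denseness and $R$-compactness, invoking the uniqueness clause of that theorem, and then using $R$-join-preservation to conclude $\phi \circ h' = h$. The extra details you supply (the appeal to Propositions~\ref{prop:Rcompactchar} and \ref{prop:Rdensechar}, and the check that $R^{-1}[a]$ is a round ideal) are exactly the bookkeeping the paper leaves implicit.
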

\begin{proof}
The homomorphism $h'$ induces a function $f' : X \to \C'$ defined by $f'(F) := \bigwedge_{a \in F} h'(a)$ and $g' : Y \to \C'$ defined by $g'(I) := \bigvee_{a \in I} h'(a)$. 

It follows from $R$-denseness that $f'[X]$ join-generates $\C'$ and $g'[Y]$ meet-generates $\C'$. It follows from $R$-compactness of $h'$ that $f'(F) \leq g'(I) \implies F \cap I \neq \emptyset$, and the other implication holds by the definition of $f'$ and $g'$. 

Therefore, by item (ii) of Theorem~\ref{thm:polarities}, there is a unique isomorphism $\phi : \C' \to \C$ such that $\phi \circ f' = f$ and $\phi \circ g' = g$. Since $h'$ is $R$-join-preserving, we have $h'(a) = \bigvee \{ h'(b) \ | \ bRa\} = g'(R^{-1}[a])$, so we deduce from $\phi \circ g' = g$ that $\phi \circ h' = h$.
\end{proof}

The same existence and uniqueness results hold for meet-strong proximity lattices and their $\sigma$-extensions (see item (2) of the following remark for a sketch of the proof). We denote the canonical extensions of a proximity lattice $(\L,R)$, if they exist, by $h : (\L,R) \to (\L,R)^\pi$ and $k : (\L,R) \to (\L,R)^\sigma$.

\begin{rem}\label{rem:canonicalextension}
\begin{enumerate}
\item
Our construction generalizes the algebraic construction of the canonical extension of a lattice $\L$, in the following sense. In the proximity lattice $(\L,\leq_\L)$, where the proximity relation is equal lattice order, the round filters and round ideals are simply the lattice filters and lattice ideals. Our construction then exactly gives the usual construction of the canonical extension of a lattice.
\item
We constructed the $\pi$-extension of a join-strong proximity lattice $(\L,R)$. In order to prove that a meet-strong proximity lattice $(\M,S)$ has a $\sigma$-extension, consider the join-strong proximity lattice $(\M^\op, S^{-1})$. Let $h : \M^\op \to \C$ be a $\pi$-extension of $(\M^\op, S^{-1})$. Then $k : \M \to \C^\op$, defined by $k(m) := h(m)$, is a $\sigma$-extension of $(\M,S)$.

Alternatively, more explicitly, one may consider the polarity $(S\Idl(\M),S\Filt(\M),Z)$, where the relation $Z$ is defined as before. Let the map $k: \M \to \G(S\Idl(\M),S\Filt(\M),Z)^\op$ be given by $a \mapsto g(S[a])$. Then, to be able to show that $k$ is a homomorphism, one needs to assume that $(\M,S)$ has the meet-strong property: the situation is order-dual to that in the proof of Lemma~\ref{lem:hishom}. The rest of the proof that $k$ is a $\sigma$-extension is analogous to the proof of Proposition~\ref{prop:existencesigmaextension}. 

\item Gehrke and Vosmaer \cite{Gehrke2009} express the canonical extension of a lattice as a {\it dcpo presentation} (also see \cite{Jung2008}). The same can be done for our $\pi$- and $\sigma$-canonical extensions of proximity lattices, via a straightforward generalisation of the methods used in \cite{Gehrke2009}. In that work, the big advantage in presenting the canonical extension via dcpo presentation was that it shed new light on the preservation of inequalities: known results about dcpo presentations and dcpo algebras were applied to obtain powerful results about the preservation of inequalities in the canonical extension. We expect that similar methods would apply to our setting, if one were to study the canonicity of inequations in proximity lattices. We mainly leave this as a topic for further research, although we will discuss the extensions of proximity lattice morphisms in Section~\ref{sec:extmorphisms}.
\end{enumerate}
\end{rem}

If a proximity lattice $(\L,R)$ is doubly strong, then the two canonical extensions $h: (\L,R) \to (\L,R)^\pi$ and $k : (\L,R) \to (\L,R)^\sigma$ exist. Note that the complete lattices $\G(X,Y,Z)$ and $\G(Y,X,Z^{-1})^\op$ which were used to define the $\pi$- and $\sigma$-extension, respectively, are isomorphic. However, the extension maps $h : \L \to \G(X,Y,Z)$ and $k : \L \to \G(Y,X,Z^{-1})^\op$ are not always the same. We have an easy characterization of when they do coincide.

\begin{prop}\label{prop:charreflexive}
If $(\L,R)$ is a doubly strong proximity lattice, then the following are equivalent: 
\begin{enumerate}
\item The $\sigma$-extension $k : (\L,R) \to (\L,R)^\sigma$ is also a $\pi$-extension of $(\L,R)$,
\item There exists an isomorphism $\phi : (\L,R)^\pi \to (\L,R)^\sigma$ such that $\phi \circ h = k$, 
\item $R$ is reflexive.
\end{enumerate}
\end{prop}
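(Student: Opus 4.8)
The plan is to reduce everything to the single condition that $k$ is $R$-join-preserving, and then to read reflexivity off an explicit computation in the polarity. First I would observe that conditions (1) and (2) are two faces of the uniqueness of the $\pi$-extension. For ``(2)$\Rightarrow$(1)'', note that a complete lattice isomorphism preserves $R$-denseness, $R$-compactness and $R$-join-preservation, since each of these is phrased purely in terms of the complete-lattice structure together with the image of the extension map; hence if $\phi\circ h=k$ with $\phi$ an isomorphism, then $k$ inherits from $h$ the property of being a $\pi$-extension. For ``(1)$\Rightarrow$(2)'', I would simply invoke uniqueness of $\pi$-extensions (Proposition~\ref{prop:uniquenesspiextension}): if $k$ is a $\pi$-extension of $(\L,R)$, there is an isomorphism $\psi:(\L,R)^\sigma\to(\L,R)^\pi$ with $\psi\circ k=h$, and then $\phi:=\psi^{-1}$ witnesses (2).

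The heart of the matter is the equivalence of (1) and (3). Since $k$ is a $\sigma$-extension it is already $R$-dense and $R$-compact, so (1) holds if and only if $k$ is moreover $R$-join-preserving. I would therefore compute $R$-join-preservation of $k$ explicitly in the polarity $\G(Y,X,Z^{-1})$ used to build the $\sigma$-extension, writing $\D:=\G(Y,X,Z^{-1})$ so that $(\L,R)^\sigma=\D^\op$, where $X$ is the set of round filters and $Y$ the set of round ideals of $\L$. Unwinding the definition $k(a)=g(R[a])$ together with the fact that $R[a]\cap I\neq\emptyset$ iff $a\in R^{-1}[I]=I$ for a round ideal $I$, one finds that, viewed as a subset of $Y$, $k(a)=\{I : a\in I\}$. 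Because joins in $\D^\op$ are computed as intersections of Galois-closed subsets of $Y$, the join $\bigvee\{k(b) : bRa\}$ equals $\{I : R^{-1}[a]\subseteq I\}$. Thus $k$ is $R$-join-preserving precisely when, for every $a$ and every round ideal $I$, the inclusion $R^{-1}[a]\subseteq I$ implies $a\in I$.

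Finally I would extract reflexivity from this last condition. The inclusion $\{I:a\in I\}\subseteq\{I:R^{-1}[a]\subseteq I\}$ always holds (it is the $R$-increasingness of $k$), so only the reverse implication is at stake. If $R$ is reflexive then $a\in R^{-1}[a]\subseteq I$ gives $a\in I$ immediately, yielding (3)$\Rightarrow$(1). For the converse, the decisive move is to note that $R^{-1}[a]$ is \emph{itself} a round ideal (using $R\circ R=R$, the join axiom, and the fact that $\bot\, R\, a$ always holds, so $R^{-1}[a]$ is nonempty and closed under the relevant closure operations), and then to substitute $I:=R^{-1}[a]$ into the condition: $R^{-1}[a]\subseteq R^{-1}[a]$ forces $a\in R^{-1}[a]$, i.e.\ $aRa$. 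As $a$ was arbitrary, $R$ is reflexive, giving (1)$\Rightarrow$(3). I expect the main obstacle to be purely bookkeeping: keeping straight that the $\sigma$-extension lives in the opposite lattice, so that its joins become intersections on the round-ideal side, and checking that $R^{-1}[a]$ is a legitimate round ideal so that it may be plugged into the universally quantified condition.
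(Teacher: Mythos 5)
Your proof is correct and follows essentially the same route as the paper's: the equivalence of (1) and (2) via uniqueness of the $\pi$-extension, and the extraction of reflexivity by substituting a principal round object into the concrete polarity representation. The only (immaterial) difference is that you compute on the $\sigma$-side, plugging the round ideal $R^{-1}[a]$ into $\G(Y,X,Z^{-1})$, whereas the paper transports to the $\pi$-representation $\G(X,Y,Z)$ and plugs in the round filter $R[a]$; the two computations are order-dual mirror images, and your verification that $R^{-1}[a]$ is a round ideal is the dual of the paper's (implicit) use of the fact that $R[a]$ is a round filter.
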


\begin{proof}
The direction (1) to (2) follows directly from the uniqueness of the $\pi$-extension (Proposition~\ref{prop:uniquenesspiextension}).

For (2) $\implies$ (3), note that because $(\L,R)^\sigma$ is now also a $\pi$-extension, we get, in the concrete representation of $(\L,R)^\sigma$ as $\G(X,Y,Z)$, that for any $a \in \L$, that $h(a) = g(R^{-1}[a]) = \bigwedge \{g(R^{-1}[b]) : aRb\}$. Observe that the round filter $R[a]$ is in the meet, so that $R[a] \in g(R^{-1}[a])$, which implies that $a \in R[a]$.

For (3) $\implies$ (1), note that the requirements of $R$-join-preserving and $R$-meet-preserving both become equivalent to $R$-increasingness in the case where $R$ is reflexive.
\end{proof}

Recall from Proposition~\ref{prop:increasingorder} that for {\it increasing} proximity lattices, the relation being reflexive is equivalent to it being equal to the lattice order. In this case, the $\sigma$- and $\pi$-canonical extension collapse to the usual canonical extension of a lattice.

We will come back to the relation between the two canonical extensions of a doubly strong proximity lattice in Proposition~\ref{prop:spectralcase}, after discussing the dualities for proximity lattices. In the distributive case, we will see that $R$ being reflexive is equivalent to $(\L,R)$ being j-isomorphic to a proximity lattice of the form $(\M,\leq_\M)$, where $\leq_\M$ is the lattice order on $\M$.

\subsection{Extending maps}\label{sec:extmorphisms}
As we mentioned in the Introduction, the power of canonical extensions for logic comes from their ability to deal with additional operations on lattices (morphisms) in a uniform way (cf. \cite{Gehrke2000}, \cite{Gehrke2001}). Thus, in this section, we want to extend proximity morphisms to the canonical extensions of the proximity lattices.

We fix the following setting for the rest of this section. Let $T : (\L,R) \to (\M,S)$ be a proximity morphism between join-strong proximity lattices and let $h_\L : (\L,R) \to (\L,R)^\pi$ and $h_\M : (\M,S) \to (\M,S)^\pi$ be the $\pi$-canonical extensions of $(\L,R)$ and $(\M,S)$. Additional assumptions on $T$, where needed, will be mentioned in the statements of the results.

We now define the {\bf $\pi$-extension} of $T$, a map from $(\L,R)^\pi$ to $(\M,S)^\pi$ which {\it extends} $T$, in a sense to be made precise below.

We first argue where $T^\pi$ should send round ideal elements of $(\L,R)^\pi$. Recall from Lemma~\ref{lem:proximitymorphismidealsfilters} that $T[I]$ is a round ideal, for any round ideal $I$. Now, for a round ideal element $y \in \I_R((\L,R)^\pi)$, we have that $I := h_\L^{-1}[\downarrow\!\!y]$ is the round ideal which is represented by $y$. We thus want $T^\pi$ to map $y$ to the ideal element in $(\M,S)^\pi$ which represents the round ideal $T[I]$. Briefly, we will define $T^\pi(y) := \bigvee h_\M[T[h_\L^{-1}[\downarrow\!\!y]]]$. Since the round ideal elements meet-generate the lattice $(\L,R)^\pi$, we now simply extend the assignment by taking meets. 

The formal definition is as follows.

\begin{dfn}
Let $T^\pi : \I_R((\L,R)^\pi) \to \I_R((\M,S)^\pi)$ be defined, for $y$ a round ideal element, by
\[T^\pi(y) := \bigvee \{h_\M(b) \ | \ b \in \M \text{ s.t. } \exists a \in \L : aTb \text{ and } h_\L(a) \leq y\} .\]

Now let $T^\pi : (\L,R)^\pi \to (\M,S)^\pi$ be the function defined by
\[ T^\pi(u) := \bigwedge \{T^\pi(y) : u \leq y \in \I_R((\L,R)^\pi)\}.\]

Dually, we could define the $\sigma$-extension of an $m$-morphism $U$ between meet-strong proximity lattices.
\end{dfn}

It is immediate from the definition that $T^\pi$ is order-preserving. We now show in what sense $T^\pi$ extends the proximity morphism $T$. 

\begin{lem}\label{lem:isextension}
For any $a \in L$, we have
\[ T^\pi(h_\L(a)) = \bigvee h_\M[T[a]].\]
\end{lem}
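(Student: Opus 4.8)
The plan is to evaluate $T^\pi$ at $h_\L(a)$ by first reducing the two-stage definition to its first clause, and then to establish the asserted identity by a double inequality, whose non-trivial direction rests on the fact that $T^{-1}[b]$ is a round filter.

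First I would observe that $h_\L(a)$ is itself a round ideal element. Indeed, $R$-join-preservation of $h_\L$ gives $h_\L(a) = \bigvee\{h_\L(b) \mid bRa\} = \bigvee h_\L[R^{-1}[a]]$, and $R^{-1}[a]$ is a round ideal: it is $R^{-1}$-closed because $R\circ R = R$, and closed under finite joins by the proximity axiom for $\vee$. Since the first clause of the definition of $T^\pi$ is plainly order-preserving on round ideal elements, the meet defining $T^\pi$ on a general element is attained at $y = h_\L(a)$ when the argument is $h_\L(a)$; hence $T^\pi(h_\L(a)) = \bigvee\{h_\M(b) \mid b\in M,\ \exists a'\in L:\ a'Tb\text{ and }h_\L(a')\le h_\L(a)\}$.

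The inequality $\bigvee h_\M[T[a]] \le T^\pi(h_\L(a))$ is immediate: every $h_\M(b)$ with $aTb$ occurs among the joinands on the right, with witness $a' := a$. The reverse inequality is the crux and the main obstacle. Here I would fix a joinand $h_\M(b)$ together with a witness $a'$, so that $a'Tb$ and $h_\L(a')\le h_\L(a)$, and show that $aTb$; this suffices, since then $h_\M(b)\le\bigvee h_\M[T[a]]$. I would use the concrete description $h_\L(c) = \{F : c\in F\}$, under which $h_\L(a')\le h_\L(a)$ says exactly that every round filter containing $a'$ also contains $a$. By Lemma~\ref{lem:proximitymorphismidealsfilters} (with Lemma~\ref{lem:idealschar}), $T^{-1}[b]$ is a round filter, so $R[T^{-1}[b]] = T^{-1}[b]$. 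Since $a'\in T^{-1}[b]$, the inclusion $T^{-1}[b]\subseteq R[T^{-1}[b]]$ yields some $a''\in T^{-1}[b]$ with $a''Ra'$. Then $a'$ lies in the principal round filter $R[a'']$, so $h_\L(a')\le h_\L(a)$ forces $a\in R[a'']$, i.e. $a''Ra$. Finally, the inclusion $R[T^{-1}[b]]\subseteq T^{-1}[b]$ applied to $a''\in T^{-1}[b]$ with $a''Ra$ gives $a\in T^{-1}[b]$, that is, $aTb$.

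Combining the two inequalities yields the equality. The one point to handle carefully is the direction of the morphism axioms: it is the roundness of $T^{-1}[b]$, equivalently $R^{-1}\circ T = T$, rather than any condition of the form $R\circ T = T$, that makes the transfer from $a'Tb$ to $aTb$ go through, and this is precisely what Lemma~\ref{lem:proximitymorphismidealsfilters} supplies.
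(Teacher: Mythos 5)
Your proof is correct, and it takes a more direct route than the paper's. Both arguments share the same crux, namely that from $a'Tb$ and $h_\L(a')\le h_\L(a)$ one can deduce $aTb$ using roundness of the filter $T^{-1}[b]$ (equivalently, $R^{-1}\circ T=T$), but they package it quite differently. The paper proves the hard inequality $T^\pi(h_\L(a))\le\bigvee h_\M[T[a]]$ via $S$-denseness: it takes an arbitrary round filter element $x=\bigwedge h_\M[F]$ below the left-hand side, uses $S$-compactness to produce some $b\in F\cap T[h_\L^{-1}[\downarrow\! h_\L(a)]]$, and then shows $b\in T[a]$ by applying $R$-compactness (from $\bigwedge h_\L[T^{-1}[b]]\le h_\L(a)=\bigvee h_\L[R^{-1}[a]]$ it gets $T^{-1}[b]\cap R^{-1}[a]\neq\emptyset$) together with roundness of $T^{-1}[b]$. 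You instead prove the elementwise identity $T[h_\L^{-1}[\downarrow\! h_\L(a)]]=T[a]$, so that the two joins coincide trivially; this dispenses with denseness and compactness on the $(\M,S)^\pi$ side altogether and establishes a slightly stronger fact. The trade-off is that your crux invokes the concrete polarity representation $h_\L(c)=\{F : c\in F\}$, so as written your argument concerns the extension constructed in Section 2.2 and transfers to an arbitrary $\pi$-canonical extension only via the uniqueness result (Proposition~\ref{prop:uniquenesspiextension}), whereas the paper's proof runs purely on the axioms. You can make your proof representation-independent with one line: if $a'\in F$ for a round filter $F$, then $\bigwedge h_\L[F]\le h_\L(a')\le h_\L(a)=\bigvee h_\L[R^{-1}[a]]$, so $R$-compactness (Proposition~\ref{prop:Rcompactchar}) gives $F\cap R^{-1}[a]\neq\emptyset$ and hence $a\in R[F]=F$ --- which is exactly your statement that every round filter containing $a'$ contains $a$. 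Finally, your detour through $a''$ and the principal round filter $R[a'']$ is unnecessary: $T^{-1}[b]$ is itself a round filter containing $a'$, so the membership transfer applies to it directly and gives $aTb$ in one step.
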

\begin{proof}
Note that $h_\L(a)$ is a round ideal element, so $T^\pi(h_\L(a)) = \bigvee h_\M[T[h_\L^{-1}[\downarrow\!\!h_\L(a)]]]$. Hence, it is clear that if $aTb$ then $h_\M(b) \leq T^\pi(h_\L(a))$, which shows that, in the required equality, the right hand side is below the left hand side.

For the converse inequality, we use denseness. Let $x = \bigwedge h_\M[F]$ be an arbitrary round filter element which is below $T^\pi(h_\L(a))$. Since $T[h_\L^{-1}[\downarrow\!\!h_\L(a)]]$ is a round ideal, $S$-compactness yields some $a' \in L$ and $b \in M$ such that $h_\L(a') \leq h_\L(a)$ and $a'Tb$. Then $\bigwedge T^{-1}[b] \leq h_\L(a') \leq h_\L(a) = \bigvee h_\L[R^{-1}[a]]$, and $T^{-1}[b]$ is a round filter, so $R$-compactness shows that $T^{-1}[b] \cap R^{-1}[a] \neq \emptyset$. It follows that $b \in T[a]$, so $x \leq \bigvee h_\M[T[a]]$, as required.
\end{proof}

Let us now discuss the meet-preservation of $T^\pi$.

\begin{lem}
$T^\pi$ preserves all meets of collections of round ideal elements.
\end{lem}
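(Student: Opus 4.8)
The plan is to prove both inequalities from the two--stage definition of $T^\pi$, with essentially all the work going into the reverse inequality. Fix a family $(y_i)_{i\in I}$ of round ideal elements of $(\L,R)^\pi$ and write $u:=\bigwedge_i y_i$. By Lemma~\ref{lem:hpreservesfiltersandideals} the round ideal elements are exactly the elements $g(I)$ for round ideals $I$, and I normalise by taking $I_i:=h_\L^{-1}[\downarrow\!\! y_i]$, so that $g(I_i)=y_i$ (a short order-theoretic check using $g(I)=\bigvee h_\L[I]$) and, unwinding the definition, $T^\pi(y_i)=\bigvee h_\M[T[I_i]]=g_\M(T[I_i])$, again a round ideal element since $T[I_i]$ is a round ideal by Lemma~\ref{lem:proximitymorphismidealsfilters}. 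Because $u\leq y_i$ for all $i$, each $T^\pi(y_i)$ occurs among the terms of the meet defining $T^\pi(u)$, which gives $T^\pi(u)\leq\bigwedge_i T^\pi(y_i)$ at once; the content of the lemma is the opposite inequality $\bigwedge_i T^\pi(y_i)\leq T^\pi(u)$.

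To establish it I unfold $T^\pi(u)=\bigwedge\{T^\pi(y): u\leq y\in\I_R\}$ and fix an arbitrary round ideal element $y=g(J)$ above $u$, with $J:=h_\L^{-1}[\downarrow\!\! y]$; it suffices to show $\bigwedge_i T^\pi(y_i)\leq T^\pi(y)=g_\M(T[J])$. By $R$-denseness of $h_\M$ (Proposition~\ref{prop:Rdensechar}) the round filter elements join--generate the codomain, so I may take an arbitrary round filter element $x=f_\M(G)$, with $G$ a round filter of $\M$, satisfying $x\leq T^\pi(y_i)$ for every $i$, and reduce to proving $x\leq T^\pi(y)$. Using Theorem~\ref{thm:polarities}(1)(c) for the polarity associated to $(\M,S)$, the hypothesis $f_\M(G)\leq g_\M(T[I_i])$ says precisely $G\cap T[I_i]\neq\emptyset$ for all $i$, and the goal $f_\M(G)\leq g_\M(T[J])$ says precisely $G\cap T[J]\neq\emptyset$.

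The key move, which I expect to be the crux, is to merge the separate witnesses for the different $i$ into a single round filter of $\L$. For each $i$ choose $b_i\in G$ and $a_i\in I_i$ with $a_i\,T\,b_i$, and set $F:=T^{-1}[G]$; by Lemma~\ref{lem:proximitymorphismidealsfilters} the map $T^{-1}[\cdot]$ sends round filters to round filters, so $F$ is a round filter of $\L$, and $a_i\in T^{-1}[b_i]\subseteq F$ shows $a_i\in F\cap I_i$, i.e. $F$ meets every $I_i$. It remains to read off from $u\leq y$ that any round filter meeting all the $I_i$ meets $J$: since $g(I)=r_Z(\{I\})$ and $r_Z$ turns unions into intersections, the construction of $\C$ gives $u=\bigcap_i g(I_i)=\{F':\forall i,\ F'\cap I_i\neq\emptyset\}$ and $y=\{F':F'\cap J\neq\emptyset\}$, so $u\leq y$ is exactly this implication between round filters. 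Applying it to $F$ yields $a\in F\cap J$; then $a\in T^{-1}[G]$ furnishes $b\in G$ with $a\,T\,b$, while $a\in J$ gives $b\in T[J]$, whence $b\in G\cap T[J]$ and therefore $x\leq T^\pi(y)$. The only routine point to verify carefully is the normalisation $I_i=h_\L^{-1}[\downarrow\!\! y_i]$, which must both recover $y_i=g(I_i)$ and make the round-ideal-element formula for $T^\pi(y_i)$ coincide with the global defining meet; the latter follows from monotonicity of $T^\pi$ on round ideal elements.
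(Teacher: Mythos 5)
Your proof takes a genuinely different route from the paper's, and its core idea is sound: instead of the paper's model-independent argument (which, for each $y \in U$, extracts witnesses $a_y, b_y$ by $S$-compactness, assembles them into an auxiliary round filter of finite meets, and then descends via a round filter element $v \leq u$ using $R$-compactness a second time), you work inside the concrete polarity model $\G(X,Y,Z)$, translate every inequality into a statement of the form ``this round filter meets that round ideal'' via Theorem~\ref{thm:polarities}(1)(c), use that meets in $\G(X,Y,Z)$ are intersections, and replace the paper's entire witness-assembly by the single pullback $F := T^{-1}[G]$, which is a round filter by Lemma~\ref{lem:proximitymorphismidealsfilters}. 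This is shorter and more transparent; the price is that it argues in the concrete model rather than from the abstract axioms, which is legitimate by uniqueness (Proposition~\ref{prop:uniquenesspiextension}).

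However, the step you dismissed as ``the only routine point'' is false as stated: for a round ideal element $y$, the set $h_\L^{-1}[\downarrow\!\!y]$ need \emph{not} be a round ideal. Take $(\D,R)$ to be the open-basis presentation of $[0,1]$ (usual topology, basis the finite unions of rational subintervals) and $y = h((0,1))$: then $(0,1) \in h_\L^{-1}[\downarrow\!\!y]$, but the only $e$ with $(0,1)\,R\,e$ is $e = [0,1]$, and $h([0,1]) \nleq y$ (the singleton $\{[0,1]\}$ is a round filter containing $[0,1]$ but not $(0,1)$); so $h_\L^{-1}[\downarrow\!\!y]$ violates $I \subseteq R^{-1}[I]$. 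Consequently $g(I_i)$ is not defined, Lemma~\ref{lem:proximitymorphismidealsfilters} does not apply to show $T[I_i]$ is a round ideal, and Theorem~\ref{thm:polarities}(1)(c) cannot be invoked for the pairs $(G, T[I_i])$ and $(G, T[J])$ --- the three places where your argument leans on the normalisation. (You are in good company: the paper's motivating sentence before the definition of $T^\pi$ makes the same claim, and its own proof needs the same care when it applies $S$-compactness to $T[h_\L^{-1}(\downarrow\!\!y)]$.)

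The repair is uniform and rescues your proof verbatim. Replace $I_i$ and $J$ by the round ideals $I_i' := R^{-1}[h_\L^{-1}[\downarrow\!\!y_i]]$ and $J' := R^{-1}[h_\L^{-1}[\downarrow\!\!y]]$: roundness follows from $R \circ R = R$ together with Lemma~\ref{lem:idealschar}, and $\bigvee h_\L[I_i'] = y_i$ still holds, so $y_i = g(I_i')$ by Lemma~\ref{lem:hpreservesfiltersandideals}, and likewise $y = g(J')$. Moreover $T[I_i] = T[I_i']$ and $T[J] = T[J']$, because the proximity-morphism axiom for $T$ gives $T[a] = T[R^{-1}[a]]$ for every $a \in L$; hence $T^\pi(y_i) = g_\M(T[I_i'])$ and $T^\pi(y) = g_\M(T[J'])$, which are now genuine round ideal elements. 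Running your argument with $I_i'$ and $J'$ throughout --- the hypothesis becomes $G \cap T[I_i'] \neq \emptyset$, the pullback $F = T^{-1}[G]$ meets each $I_i'$, and $u \leq y$ says every round filter meeting all $I_i'$ meets $J'$ --- every step then goes through unchanged, and the proof is correct.
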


\begin{proof}
Let $U$ be a collection of round ideal elements of $(\L,R)^\pi$, and write $u := \bigwedge U$. We show that $T^\pi(u) = \bigwedge T^\pi[U]$. 
The inequality $T^\pi(u) \leq \bigwedge T^\pi[U]$ holds because $T^\pi$ is order-preserving.
For the converse inequality, we use denseness. Let $F$ be an arbitrary round filter such that $x_0 := \bigwedge h_\M[F] \leq \bigwedge T^\pi[U]$. We show that $x_0 \leq T^\pi(u)$.

Fix $y \in U$. We get that $x_0 = \bigwedge h_\M[F] \leq T^\pi(y) = \bigvee h_\M[T[h_\L^{-1}(\downarrow\!\!y)]]$.  By $S$-compactness (Proposition~\ref{prop:Rcompactchar}), we can pick some $b_y \in F \cap T[h_\L^{-1}(\downarrow\!\!y)]$. We then also pick $a'_y \in h_\L^{-1}(\downarrow\!\!y)$ such that $a'_yTb_y$, and, since $R \circ T^{-1} = T^{-1}$, we can pick $a_y \in \L$ such that $a_yRa'_y$ and $a_yTb_y$. We perform these steps for every $y \in U$, and thus get subsets $\{a_y : y \in U\}$ of $\L$ and $\{b_y : y \in U\}$ of $F$. 

Note that $G := \{a \in \L : \exists y_1, \dots, y_n \in U : \bigwedge_{y=1}^n a_{y_i}\,R\,a\}$ is a round filter of $\L$. Let $v := \bigwedge h_\L[G]$.  Now, since for every $y \in U$ we have $a'_y \in G$, we get $v \leq h_\L(a'_y) \leq y$, so that $v \leq \bigwedge U = u$, and hence $T^\pi(v) \leq T^\pi(u)$.

We finish the argument by showing that $x_0 \leq T^\pi(v)$. By definition of $T^\pi$, we may show that for an arbitrary round ideal $I$ such that $y_0 := \bigvee h_\L[I] \geq v$, we have $x_0 \leq T^\pi(y_0)$. By $R$-compactness, if $v \leq \bigvee h_\L[I]$, then there is $a_0 \in G \cap I$. By definition of $G$, there are $y_1, \dots, y_n \in U$ such that $\bigwedge_{i=1}^n a_{y_i} \, R a_0$. Since $a_{y_i} T b_{y_i}$ for every $i$, we have that $\bigwedge_{y=1}^n a_{y_i} T \bigwedge_{i=1}^n b_{y_i}$. We now put $b_0 := \bigwedge_{i=1}^n b_{y_i} \in F$, and get that $a_0Tb_0$.

Since $a_0 \in I$, we have $h_\L(a_0) \leq y_0$, and so, since $a_0 T b_0$, we have $h_\M(b_0) \leq T^\pi(y_0)$, by the definition of $T^\pi(y_0)$. Since $x_0 = \bigwedge h_\M[F]$, and $b_0 \in F$, we get $x_0 \leq h_\M(b_0)$, so we conclude $x_0 \leq T^\pi(y_0)$. 
\end{proof}

Using this Lemma, it is now fairly easy to show:

\begin{prop}
$T^\pi$ preserves all meets.
\end{prop}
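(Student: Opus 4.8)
The plan is to bootstrap from the preceding lemma. Since that lemma already establishes that $T^\pi$ preserves meets of families of round ideal elements, and since $R$-denseness (Proposition~\ref{prop:Rdensechar}) tells us that \emph{every} element of $(\L,R)^\pi$ is the meet of the round ideal elements lying above it, an arbitrary meet in $(\L,R)^\pi$ should be expressible as a meet of round ideal elements, at which point the lemma applies verbatim. So no genuinely new compactness or density argument is needed; the task is purely to arrange the reduction and then compare two iterated meets.

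Concretely, let $V \subseteq (\L,R)^\pi$ be an arbitrary family and set $v := \bigwedge V$. I would introduce the family of round ideal elements that sit above some member of $V$, namely
\[ Y_V := \{ y \in \I_R((\L,R)^\pi) : \exists w \in V \text{ with } w \leq y \} . \]
The first step is to verify that $v = \bigwedge Y_V$. The inequality $v \leq \bigwedge Y_V$ is immediate, since each $y \in Y_V$ dominates some $w \geq v$ and hence $v \leq y$. For the reverse, Proposition~\ref{prop:Rdensechar} gives, for each $w \in V$, the representation $w = \bigwedge\{ y \in \I_R((\L,R)^\pi) : w \leq y\}$; as this indexing set is contained in $Y_V$, we get $\bigwedge Y_V \leq w$ for every $w \in V$, whence $\bigwedge Y_V \leq \bigwedge V = v$. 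With $v = \bigwedge Y_V$ and $Y_V$ a family of round ideal elements, the preceding lemma applies directly and yields $T^\pi(v) = \bigwedge T^\pi[Y_V]$. On the other side, the definition of $T^\pi$ on arbitrary elements gives $T^\pi(w) = \bigwedge\{ T^\pi(y) : w \leq y \in \I_R((\L,R)^\pi)\}$ for each $w \in V$; taking the meet over all $w \in V$ produces an iterated meet whose inner index sets union to $Y_V$, so by idempotency and associativity of meets this collapses to $\bigwedge T^\pi[V] = \bigwedge T^\pi[Y_V]$. Comparing the two expressions gives $T^\pi(v) = \bigwedge T^\pi[V]$, which is the claim.

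I do not expect a real obstacle in this step: all the substantive work, where the compactness properties and the proximity-morphism axioms enter, was already carried out in the preceding lemma for round ideal elements. The only point requiring mild care is the last manipulation, i.e.\ checking that the double meet $\bigwedge_{w \in V}\bigwedge_{w \leq y \in \I_R((\L,R)^\pi)} T^\pi(y)$ coincides with $\bigwedge_{y \in Y_V} T^\pi(y)$; this is exactly the observation that the round ideal elements appearing across the inner meets are precisely the elements of $Y_V$, together with idempotency of the lattice meet. Everything else is order-preservation of $T^\pi$ and the denseness representation already available.
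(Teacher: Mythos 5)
Your proof is correct and follows essentially the same route as the paper: the paper also introduces the set $U' := \{y \in \I_R((\L,R)^\pi) \mid \exists u \in U : u \leq y\}$ of round ideal elements above the given family, uses $R$-denseness to relate $\bigwedge U'$ to $\bigwedge U$, and then invokes the preceding lemma on meets of round ideal elements. The only cosmetic difference is that you establish the two equalities $\bigwedge V = \bigwedge Y_V$ and $\bigwedge T^\pi[V] = \bigwedge T^\pi[Y_V]$ (the latter by collapsing the iterated meet coming from the definition of $T^\pi$), whereas the paper chains three inequalities and lets order-preservation of $T^\pi$ supply the trivial direction.
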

\begin{proof}
Let $U$ be an arbitrary collection of elements from $(\L,R)^\pi$.

Put $U' := \{y \in \I_R((\L,R)^\pi) \ | \ \exists u \in U : y \geq u\}$. Note that $\bigwedge U' \leq \bigwedge U$, by $R$-denseness. We further have that $\bigwedge T^\pi[U]$ is a lower bound for the set $T^\pi[U']$, so $\bigwedge T^\pi[U] \leq \bigwedge T^\pi[U']$. Finally, by item (2), since $U'$ is a set of round ideal elements, we have that $T^\pi\left(\bigwedge U'\right) \leq \bigwedge T^\pi[U']$. Putting these inequalities together, we get
\[ \bigwedge T^\pi[U] \leq \bigwedge T^\pi[U'] \leq T^\pi\left(\bigwedge U'\right) \leq T^\pi\left(\bigwedge U\right).\qedhere\]
\end{proof}

Regarding joins, the situation is more delicate. However, we can show the following by methods which should look familiar by now.

\begin{lem}
$T^\pi$ preserves joins of up-directed collections of round ideal elements.
\end{lem}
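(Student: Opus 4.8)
The plan is to establish the nontrivial inequality $T^\pi\bigl(\bigvee U\bigr) \leq \bigvee T^\pi[U]$ for an up-directed collection $U$ of round ideal elements of $(\L,R)^\pi$; the reverse inequality is immediate since $T^\pi$ is order-preserving. I would write each $y \in U$ via the round ideal $I_y := h_\L^{-1}[\downarrow\!\!y]$ it determines, so that $T^\pi(y) = \bigvee h_\M[T[I_y]]$. Since $y \leq y'$ forces $I_y \subseteq I_{y'}$, up-directedness of $U$ makes the family $\{I_y : y \in U\}$ directed under inclusion.

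First I would check that $\bigvee U$ is again a round ideal element. Put $J := \bigcup_{y \in U} I_y$. By Lemma~\ref{lem:idealschar}, $J$ is a round ideal: $R^{-1}[J] = \bigcup_y R^{-1}[I_y] = J$, and closure under finite joins follows from directedness, since any two elements of $J$ lie in a common $I_y$. By Lemma~\ref{lem:hpreservesfiltersandideals} we then get $\bigvee U = \bigvee_y \bigvee h_\L[I_y] = \bigvee h_\L[J] = g(J)$, so $\bigvee U$ is a round ideal element. Exactly as in Lemma~\ref{lem:isextension}, the round-ideal-element clause of the definition therefore applies:
\[ T^\pi\Bigl(\bigvee U\Bigr) = \bigvee\{h_\M(b) \ | \ b \in \M \text{ s.t. } \exists a \in \L : aTb \text{ and } h_\L(a) \leq \bigvee U\}. \]

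The heart of the argument is to show that each generator $h_\M(b)$ on the right lies below $\bigvee T^\pi[U]$. Fix $a,b$ with $aTb$ and $h_\L(a) \leq \bigvee U = \bigvee h_\L[J]$. Since $a \in T^{-1}[b]$ and $T^{-1}[b]$ is a round filter by Lemma~\ref{lem:proximitymorphismidealsfilters}, we have $\bigwedge h_\L[T^{-1}[b]] \leq h_\L(a) \leq \bigvee h_\L[J]$. Applying $R$-compactness in the form of Proposition~\ref{prop:Rcompactchar} to the round filter $T^{-1}[b]$ and the round ideal $J$ yields an element $a_0 \in T^{-1}[b] \cap J$. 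By definition of $J$, $a_0 \in I_y$ for some $y \in U$, i.e.\ $h_\L(a_0) \leq y$, while $a_0 T b$; hence $b \in T[I_y]$ and $h_\M(b) \leq T^\pi(y) \leq \bigvee T^\pi[U]$. Taking the join over all admissible $b$ gives $T^\pi(\bigvee U) \leq \bigvee T^\pi[U]$, which together with monotonicity completes the proof.

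I expect the main obstacle to be keeping straight the two places where the hypotheses are genuinely used: up-directedness is needed precisely to guarantee that the union $J$ is a round ideal, so that $\bigvee U$ is a round ideal element and Proposition~\ref{prop:Rcompactchar} is applicable, while the compactness step is what lets one pull a witness $a_0$ back from the global join into a single member $I_y$ of the directed family. In contrast to the preceding meet-preservation lemma, denseness is not needed here: once the directed union is identified as a round ideal element, the argument is essentially a compactness pigeonhole.
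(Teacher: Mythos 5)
Your overall strategy---reduce to a single generator $h_\M(b)$, use the round filter $T^{-1}[b]$, and let compactness pull a witness back into one member of the directed family---is exactly the right idea, and it is the same pigeonhole the paper's proof runs. But one step is genuinely false: you assert $R^{-1}[I_y] = I_y$ for $I_y := h_\L^{-1}[\downarrow\!\!y]$, and hence that $J = \bigcup_{y \in U} I_y$ is a round ideal. Only the inclusion $R^{-1}[I_y] \subseteq I_y$ holds (because $h_\L$ is $R$-increasing); the reverse inclusion is the roundness condition, and it fails in general. Concretely, take the open-basis presentation $(\D,R)$ of $X = [0,1]$ from Example~\ref{exa:openbasis}, where $\D$ is the lattice of all opens and $dRe$ iff $\overline{d} \subseteq e$, and let $y = g(I)$ for the round ideal $I = \{d : \overline{d} \subseteq [0,1)\}$. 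Then $a = [0,1)$ satisfies $h(a) \leq y$ (any round filter containing $a$ equals its $R$-image, so it contains some $b$ with $\overline{b} \subseteq a$, and such a $b$ lies in $I$), yet the only $c$ with $aRc$ is $c = [0,1]$, and $h([0,1]) \nleq y$ because $\{[0,1]\}$ is a round filter disjoint from $I$. So $I_y$ is a lattice ideal but not a round ideal; taking $U = \{y\}$ shows your $J$ need not be round either, and the appeal to Proposition~\ref{prop:Rcompactchar}, which requires a genuine round ideal, is unjustified. (The paper's motivating sentence before the definition of $T^\pi$ makes the same imprecise claim, but its actual proofs never rely on it.)

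The gap is local and the repair is precisely the move the paper makes: insert $R^{-1}[\cdot]$. Each $R^{-1}[I_y]$ \emph{is} a round ideal (join-closure follows from the proximity-lattice axioms, roundness from $R \circ R = R$), and $\bigvee h_\L[R^{-1}[I_y]] = y$ exactly because $y$ is a round ideal element: any round ideal $I$ with $g(I) = y$ sits inside $R^{-1}[I_y]$, since for $a \in I$ roundness of $I$ gives $c \in I$ with $aRc$ and $h_\L(c) \leq y$. By up-directedness of $U$ the family $\{R^{-1}[I_y]\}_{y \in U}$ is directed, so $R^{-1}[J] = \bigcup_y R^{-1}[I_y]$ is a round ideal with $\bigvee h_\L[R^{-1}[J]] = \bigvee U$; this also correctly proves your claim that $\bigvee U$ is a round ideal element (it is $g(R^{-1}[J])$, not $g(J)$). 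Now Proposition~\ref{prop:Rcompactchar} applies to $T^{-1}[b]$ and $R^{-1}[J]$, giving $a_0 \in T^{-1}[b] \cap R^{-1}[I_y]$ for some $y \in U$, and since $R^{-1}[I_y] \subseteq I_y$ you conclude $b \in T[I_y]$, hence $h_\M(b) \leq T^\pi(y) \leq \bigvee T^\pi[U]$ as intended. (The paper handles the same difficulty slightly differently: it applies raw $R$-compactness after replacing $a$ by $a'$ with $a'Ra$ and $a'Tb$, using $R^{-1} \circ T = T$; your route through Proposition~\ref{prop:Rcompactchar} is equivalent once the ideal is made round.)
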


\begin{proof}
Let $U$ be an up-directed collection in $\I_R((\L,R)^\pi)$. We need to show that $T^\pi(\bigvee U) \leq \bigvee T^\pi[U]$, as the other direction follows directly from the fact that $T^\pi$ is order-preserving.

By definition, $T^\pi(\bigvee U) = \bigvee h_\M[T[h_\L^{-1}[\downarrow\!\!\bigvee U]]]$. So we take an arbitrary $b \in T[h_\L^{-1}[\downarrow\!\!\bigvee U]]$ and show that $h_\M(b) \leq \bigvee T^\pi[U]$. Pick an $a \in h_\L^{-1}[\downarrow\!\!\bigvee U]$ such that $aTb$, and then, since $R^{-1} \circ T = T$, also pick $a'$ such that $a'Tb$ and $a'Ra$.

Now note that $\bigvee U \leq \bigvee h_\L[R^{-1}[\bigcup_{u\in U} h_\L^{-1}(\downarrow\!\!u)]]$, because $U$ consists of round ideal elements. Hence,
\[\bigwedge h_\L[R[a']] \leq h_\L(a) \leq \bigvee U \leq \bigvee h_\L[R^{-1}[\bigcup_{u\in U} h_\L^{-1}(\downarrow\!\!u)]].\]
By $R$-compactness, we can pick $t_1, \dots, t_n \in \bigcup_{u \in U} h_L^{-1}(\downarrow\!\!u)$ such that $a' R \bigvee_{i=1}^n t_i$. For each $i$, pick $u_i \in U$ such that $h_\L(t_i) \leq u_i$. Define $t_0 := \bigvee_{i=1}^n t_i$, and, since $U$ is up-directed, pick $u_0 \in U$ such that $u_0 \geq \bigvee_{i=1}^n u_i$. We then have $h_\L(t_0) \leq \bigvee_{i=1}^n u_i \leq u_0$. Now $a' R t_0$ and $a' T b$, so $t_0 T b$. We conclude that $b \in T[h_\L^{-1}(\downarrow\!\!u_0)]$. Now, by definition of $T^\pi(u_0)$, we get that $h_\M(b) \leq T^\pi(u_0)$, which is below $\bigvee T^\pi[U]$, as required.\qedhere
\end{proof}

If $T$ is a j-morphism, we can also show the following.
\begin{lem}
If $T : (\L,R) \to (\M,S)$ is a j-morphism, then $T^\pi$ preserves finite joins of round ideal elements.
\end{lem}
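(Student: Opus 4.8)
The plan is to show that $T^\pi(y_1 \vee y_2) \leq T^\pi(y_1) \vee T^\pi(y_2)$ for round ideal elements $y_1, y_2$, since the reverse inequality is immediate from the fact that $T^\pi$ is order-preserving. Writing $I_j := h_\L^{-1}[\downarrow\!\!y_j]$ for the round ideals represented by $y_1$ and $y_2$, the key observation is that the join $y_1 \vee y_2$ is represented by the round ideal generated by $I_1 \cup I_2$, so that $T^\pi(y_1 \vee y_2) = \bigvee h_\M[T[I]]$ where $I$ is this joined ideal. An arbitrary element $b \in T[I]$ satisfies $aTb$ for some $a \in I$, and by the structure of $I$ we may take $a \leq a_1 \vee a_2$ with $a_1 \in I_1$, $a_2 \in I_2$. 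The task is to show $h_\M(b) \leq T^\pi(y_1) \vee T^\pi(y_2)$.

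First I would use the compositional axioms to replace $a$ by a better-approximated element: since $R^{-1} \circ T = T$, pick $a'$ with $a' T b$ and $a' R (a_1 \vee a_2)$. Here is where join-approximability of $T$ enters, which is exactly the extra hypothesis of the lemma. Since $a' T b$ and $a' R (a_1 \vee a_2)$, we have $a' T (a_1 \vee a_2)$ after adjusting via the axiom $R \circ T = T$; more carefully, I would set up the situation so that $a_1 \vee a_2$ appears on the right of a $T$-relation with $b$, i.e., find a relation of the form $(\text{something}) \, T \, (a_1 \vee a_2)$ that lets me apply the join-approximability axiom (item 4 for $H^{-1}$) to split across the join $a_1 \vee a_2$. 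Join-approximability then yields finite sets splitting the witness so that there are $b_1, b_2 \in M$ with $b S (b_1 \vee b_2)$, where each $b_j$ is $T$-related to something in $I_j$. Concretely, the axiom should produce $b_1 \in T[I_1]$ and $b_2 \in T[I_2]$ with $b \, S \, (b_1 \vee b_2)$ (possibly after taking finite joins within each ideal, using that $I_j$ is closed under finite joins).

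From $b_1 \in T[I_1]$ we get $h_\M(b_1) \leq T^\pi(y_1)$ by the definition of $T^\pi$ on round ideal elements, and similarly $h_\M(b_2) \leq T^\pi(y_2)$. Since $h_\M$ is a homomorphism (in particular join-preserving), $h_\M(b_1 \vee b_2) = h_\M(b_1) \vee h_\M(b_2) \leq T^\pi(y_1) \vee T^\pi(y_2)$. Finally, from $b \, S \, (b_1 \vee b_2)$ and the fact that $h_\M$ is $S$-increasing, we obtain $h_\M(b) \leq h_\M(b_1 \vee b_2) \leq T^\pi(y_1) \vee T^\pi(y_2)$, which is what we needed. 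Taking the join over all $b \in T[I]$ gives the desired inequality.

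The main obstacle I anticipate is bookkeeping the correct application of join-approximability: the axiom is stated for $H^{-1}$ and splits a single join on the right, so I must arrange the composition axioms ($R \circ T = T$ and $R^{-1}\circ T = T$) to put $a_1 \vee a_2$ into exactly the position where the axiom applies, and then correctly identify that the resulting finite witness set lies in $G^{-1}$-images meeting both $I_1$ and $I_2$ separately. The closure of each $I_j$ under finite joins is what lets me collapse the finite witness sets into single elements $b_1, b_2$; keeping track of which finite joins land in which ideal is the delicate point, but it is routine once the roles are fixed.
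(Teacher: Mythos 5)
Your endgame is essentially the paper's and is correct: once you have $(a_1 \lor a_2)\,T\,b$ with $a_1,a_2$ in round ideals representing $y_1,y_2$, join-approximability of $T^{-1}$ gives a finite $A \subseteq T[a_1] \cup T[a_2]$ with $b \, S \, \bigvee A$; splitting $A$ and using that each $T[a_i]$ is a round ideal (hence closed under finite joins) produces $b_i \in T[a_i]$ with $b\,S\,(b_1 \lor b_2)$, and $S$-increasingness of $h_\M$ finishes the estimate. (Two small repairs are needed there: your intermediate assertion ``$a'\,T\,(a_1\lor a_2)$'' is ill-typed, since $T \subseteq L \times M$ while $a_1 \lor a_2 \in L$; the relation you actually need, and implicitly use, is $(a_1\lor a_2)\,T\,b$, obtained from $a'\,R\,(a_1\lor a_2)$ and $a'\,T\,b$ via $R^{-1}\circ T = T$.)

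The genuine gap is your opening ``key observation''. You set $I_j := h_\L^{-1}[{\downarrow}y_j]$, call these round ideals, and declare $T^\pi(y_1 \lor y_2) = \bigvee h_\M[T[I]]$ for $I$ the round ideal generated by $I_1 \cup I_2$. First, $h_\L^{-1}[{\downarrow}y]$ need not be a round ideal at all (the paper's own motivating remark notwithstanding): for the open-basis presentation of $[0,1]$ with $d = (0,\tfrac12)$, the element $y = h(d)$ is round-open, yet $h^{-1}[{\downarrow}y] = \{e : e \subseteq d\}$, which fails $R^{-1}[J]=J$ because $d$ is $R$-below no member of it. Second, and decisively, $T^\pi(y_1\lor y_2)$ is \emph{defined} as a join over $T[h_\L^{-1}[{\downarrow}(y_1\lor y_2)]]$, and this set is in general strictly larger than any such $I$; your claimed identity (which is true) is precisely the nontrivial content of the lemma, and proving it is where compactness must enter. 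Your proposal never invokes $R$-compactness or denseness, so the bridge from an arbitrary $a$ with $h_\L(a) \leq y_1 \lor y_2$, $a\,T\,b$, down into the ideal $I$ is simply missing. The paper supplies this bridge with denseness, $S$-compactness and the round filter $T^{-1}[F]$, followed by $R$-compactness against $I_0$. Your argument can in fact be completed more directly: pick $a'$ with $a'\,R\,a$ and $a'\,T\,b$ (using $R^{-1}\circ T = T$); then $\bigwedge h_\L[R[a']] \leq h_\L(a) \leq \bigvee h_\L[I]$ with $R[a']$ a round filter, so $R$-compactness (Proposition~\ref{prop:Rcompactchar}) yields an element of $R[a'] \cap I$, whence $a'\,R\,(a_1\lor a_2)$ for suitable $a_i \in I_i$ and your join-approximability step applies. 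With that one insertion your proof is correct, and even somewhat shorter than the paper's; without it, the proof does not go through.
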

\begin{proof}
Suppose $Y = \{y_1, \dots, y_n\}$ is a finite subset of $\I_R((\L,R)^\pi)$. We need to show that $T^\pi(\bigvee Y) \leq \bigvee T^\pi[Y]$. 

For each $1 \leq k \leq n$, let $I_k$ be a round ideal such that $y_k = \bigvee h_\L[I_k]$. 

Take an arbitrary $b \in T[h_\L^{-1}(\downarrow\!\!\bigvee Y)]$, and pick $a \in \L$ such that $h_\L(a) \leq \bigvee Y$ and $aTb$. We need to show that $h_\M(b) \leq \bigvee T^\pi[Y]$. 

By denseness, it suffices to show that for an arbitrary round-closed element $x \leq h_\M(b)$, we have $x \leq \bigvee T^\pi[Y]$. Since $x$ is round-closed there is a round filter $F$ such that $x = \bigwedge h_\M[F]$, and then by $S$-compactness and $S$-join-preservingness there is some $c \in F \cap S^{-1}[b]$.

Note that $T^{-1}[F]$ is a round filter because $T$ is a proximity morphism, and that $a \in T^{-1}[F]$ since $aTbS^{-1}c$, so $aTc$. Thus, we get
\[ \bigwedge h_\L[T^{-1}[F]] \leq h_\L(a) \leq \bigvee Y \leq \bigvee h_\L\left[I_0\right],\]
where $I_0$ is the smallest round ideal containing $\bigcup_{k=1}^n I_k$, i.e., $I_0 := R^{-1}[\{\bigvee B \ | \ B \subseteq \bigcup_{k=1}^n I_k\}]$. By $R$-compactness, we can pick some $d \in T^{-1}[F] \cap I_0$. Pick $e \in F$ such that $dTe$, and pick $B \subseteq \bigcup_{k=1}^n I_k$ such that $d\,R\, \bigvee B$. We then get that $\bigvee B \, T \, e$, so, since $T^{-1}$ is join-approximable, there is a finite subset $A$ of $T[B]$ such that $e\, S \, \bigvee A$.

Now $\bigvee A \in F$, so $x \leq h_\M(\bigvee A)$. Moreover, $h_\M(\bigvee A) = \bigvee h_\M(A) \leq \bigvee T^\pi[Y]$, because $A \subseteq T[B] \subseteq \bigcup_{k=1}^n T[I_k]$. We conclude that $x \leq \bigvee T^\pi[Y]$, as required.
\end{proof}

At this point, we have seen that $T^\pi$ always preserves arbitrary meets and up-directed joins of round ideal elements, and also finite joins of round ideal elements in case $T$ is a j-morphism. It follows that, if $T$ is a j-morphism, then $T^\pi$ preserves arbitrary joins of round ideal elements. 

It is natural to ask if we can prove by similar methods that $T^\pi$ preserves {\it all} joins, as is possible for canonical extensions of lattices using a `restricted distributivity' law (cf. \cite{Gehrke2001}, Lemma 3.2). Although we are able to prove an analogue of that law for proximity lattices, we do not see at this point how to use it to generalize the proof from \cite{Gehrke2001} that $T^\pi$ preserves all joins. Instead, we will prove the result that $T^\pi$ preserves all joins using the duality, that we will connect with the canonical extension in the next section.

\section{Duality for stably compact spaces}\label{sec:duality}
Our description of the canonical extensions of proximity lattices can be viewed as an algebraic, point-free way of describing the duality between stably compact spaces and certain proximity lattices that was established in \cite{Jung1996}. Note in particular that we have not used the Axiom of Choice in the previous section. In the current section, we will discuss the picture of dualities that arises when we do assume the Axiom of Choice. 

We summarize the categories and equivalences that play a role in the following diagram.

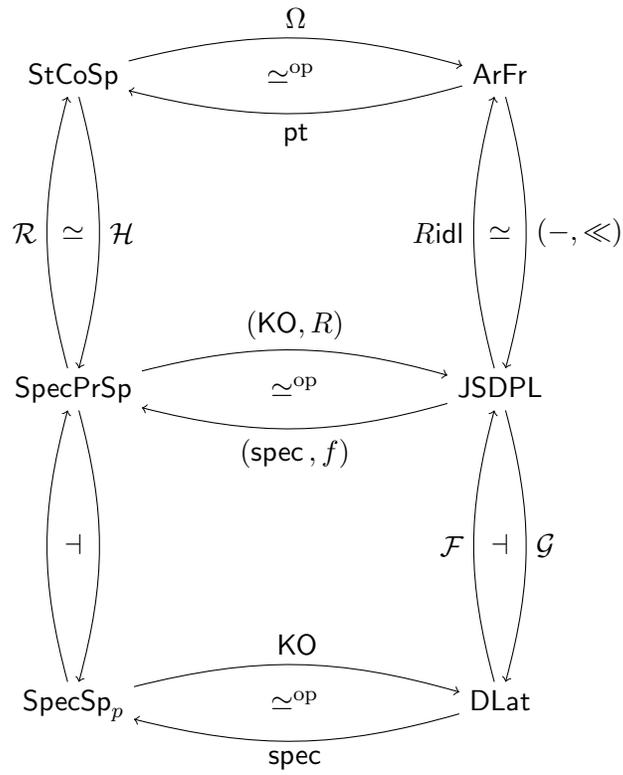
\begin{figure}[htp]\label{fig:dualitydiagram}
\centering
\begin{tikzpicture}[description/.style={fill=white,inner sep=2pt}, bend angle=15, node distance=2cm]
\matrix (m) [matrix of math nodes, row sep=4em, column sep=4em, text height=1.5ex, text depth=0.25ex]
{\cat{StCoSp} & \simeq^\op  & \cat{ArFr} \\
	\simeq		&	&	\simeq		  \\
 \cat{SpecPrSp} &  \simeq^\op & \cat{JSDPL}\\
	\lad		&	&		\lad	   \\ 
\cat{SpecSp}_p & \simeq^\op  & \cat{DLat} \\ };

\draw[->] (m-1-1) to [bend left] node[auto] {$ \Omega $} (m-1-3);
\draw[->] (m-1-3) to [bend left] node[auto] {$ \pt $} (m-1-1);
\draw[->] (m-1-1) to [bend left] node[auto] {$ \H $} (m-3-1);
\draw[->] (m-3-1) to [bend left] node[auto] {$ \sR $} (m-1-1);
\draw[->] (m-1-3) to [bend left] node[auto] {$ (-,\ll) $} (m-3-3);
\draw[->] (m-3-3) to [bend left] node[auto] {$ R\Idl $} (m-1-3);
\draw[->] (m-3-1) to [bend left] node[auto] {$ (\KO,R) $} (m-3-3);
\draw[->] (m-3-3) to [bend left] node[auto] {$ (\spec,f) $} (m-3-1);
\draw[->] (m-5-1) to [bend left] node[auto] {$ \KO $} (m-5-3);
\draw[->] (m-5-3) to [bend left] node[auto] {$ \spec $} (m-5-1);
\draw[->] (m-5-1) to [bend left] node[auto] {} (m-3-1);
\draw[->] (m-3-1) to [bend left] node[auto] {} (m-5-1);
\draw[->] (m-5-3) to [bend left] node[auto] {$ \F $} (m-3-3);
\draw[->] (m-3-3) to [bend left] node[auto] {$ \G $} (m-5-3);
\end{tikzpicture}
\caption{Dualities for stably compact and spectral spaces}
\end{figure}

The horizontal dualities in the first and last row of the diagram are well-known: $(\Omega, \pt)$ is the appropriate restriction of the well-known `geometric' duality between sober spaces and spatial frames to a duality between stably compact spaces and arithmetic frames (\cite{Abramsky1994}, Theorem 7.2.19), and the object part of $(\KO, \spec\!)$ is Stone's correspondence between spectral spaces and distributive lattices (\cite{Stone1937} and \cite{Johnstone1982}, Corollary II.3.4). The morphisms of $\DLat$ are homomorphisms, and the morphisms of $\cat{SpecSp}_p$ are the {\it perfect} or {\it bicontinuous} maps: functions for which the inverse image of a compact-open set is compact-open.

The duality in the middle row, which will connect geometric duality and Stone duality, is made explicit for the first time in this paper. We will discuss it in Section~\ref{subsec:SpPrSp}, where we show that the category of join-strong distributive proximity lattices is dually equivalent to the category $\cat{SpecPrSp}$, whose objects will be {\it spectral proximity spaces}: spectral spaces enriched with a retraction.

The vertical equivalence in the upper right corner, between arithmetic frames and join-strong distributive proximity lattices, is only a slight modification of the main achievement of \cite{Jung1996}, as it yields a finitary description (i.e., a lattice with a relation) of an infinitary algebraic representation (i.e., a frame with certain properties) of a stably compact space.

The adjunction in the lower right corner is (the restriction to distributive lattices of) the pair of functors $(\F,\G)$ that we described in Propositions~\ref{prop:Fisfunctor} and \ref{prop:Fhasrightadjoint} above: recall that $\F$ sends a lattice $\L$ to the join-strong proximity lattice $(\L,\leq)$, and $\G$ sends a proximity lattice $(\L,R)$ to the lattice of its round ideals.

Because the adjunction and equivalence in the left side of the diagram can be obtained by going through the appropriate dualities, we do not need to pay too much attention to them, but we will describe the functor $\H$ in Proposition~\ref{prop:equivalencespecprspstcosp} below, as it turns out to have a reasonably intuitive and concrete incarnation on the space side.

Before we describe the duality with spectral proximity spaces in detail, we will first isolate the essential consequence of the Axiom of Choice for proximity lattices in Section~\ref{subsec:PRFT}, which ensures that the dual space has `enough points'.
\subsection{The Prime Round Filter Theorem}\label{subsec:PRFT}
Note that up until now, the Axiom of Choice (AC) has not been used. In particular, our construction of the canonical extension of a proximity lattice did not rely on AC. However, once we want to discuss duality per se, AC must enter the picture. As is the case for Stone's duality for spectral spaces, the role of AC is to enable us to `make' the points of a space, starting from an abstract proximity lattice.

The points of the dual space of a distributive proximity lattice will be the round filters of the lattice which are {\it prime}, in the following sense.

\begin{dfn}
a round filter $F$ is called {\bf prime} if, for any finite set $A \subseteq L$, $\bigvee A \in F$ implies $A \cap F \neq \emptyset$. 
\end{dfn}

The following theorem is the relevant consequence of the Axiom of Choice in our setting.

\begin{thm}[Prime Round Filter Theorem]
Let $(\D,R)$ be a join-strong distributive proximity lattice. Let $G$ be a round filter and $J$ a round ideal such that $G \cap J = \emptyset$. Then there exists a prime round filter $F_0$ such that $F_0 \cap J = \emptyset$ and $G \subseteq F_0$.
\end{thm}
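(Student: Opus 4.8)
The plan is a Zorn's Lemma argument in the spirit of the classical prime filter theorem for distributive lattices, adapted to the round setting. First I would consider the poset $\mathcal{P}$ of all round filters $F$ with $G \subseteq F$ and $F \cap J = \emptyset$, ordered by inclusion; it is nonempty since $G \in \mathcal{P}$. To apply Zorn's Lemma I must check that the union of a chain $(F_i)$ in $\mathcal{P}$ is again a round filter: using the characterisation from Lemma~\ref{lem:idealschar}, $R[\bigcup_i F_i] = \bigcup_i R[F_i] = \bigcup_i F_i$, and any two elements of the union lie in a common $F_i$, whose meet stays inside $F_i$. The union is disjoint from $J$ and contains $G$, so $\mathcal{P}$ has a maximal element $F_0$, and it remains to show $F_0$ is prime.

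By an easy induction on $|A|$ it suffices to treat binary joins, i.e. to show that $x \vee y \in F_0$ forces $x \in F_0$ or $y \in F_0$ (the degenerate case uses $\bot \in J$, so $\bot \notin F_0$). Suppose for contradiction $x \vee y \in F_0$ but $x, y \notin F_0$. The key preparatory move, and the place where join-strongness is essential, is to replace $x,y$ by interpolants. Since $F_0$ is round there is $g \in F_0$ with $g \mathrel{R} (x \vee y)$; join-strongness (axiom 4) gives a finite $B' \subseteq R^{-1}[\{x,y\}]$ with $g \mathrel{R} \bigvee B'$. Splitting $B'$ according to whether an element is $R$-below $x$ or $y$ and setting $x' := \bigvee\{b \in B' : bRx\}$ and $y' := \bigvee\{b \in B' : bRy\}$, axiom 2 yields $x' \mathrel{R} x$, $y' \mathrel{R} y$ and $x' \vee y' = \bigvee B' \in F_0$; moreover $x', y' \notin F_0$, since $R[F_0] = F_0$ would otherwise pull $x$, respectively $y$, into $F_0$.

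Next I would invoke maximality through the round filter generated by adjoining $x'$. Using $R \circ R = R$ and the order-compatibility of $R$ that follows from axioms 2 and 3 (namely $m' \le m \mathrel{R} c$ implies $m' \mathrel{R} c$, and $m \mathrel{R} c \le c'$ implies $m \mathrel{R} c'$), one checks that $\langle F_0, x'\rangle := R[M]$, where $M$ is the meet-closure of $F_0 \cup \{x'\}$, is a round filter; explicitly it is $F_0 \cup \{c : \exists f \in F_0,\ (f \wedge x') \mathrel{R} c\}$. Because $x' \mathrel{R} x$ we have $x \in \langle F_0, x'\rangle$, so this round filter properly extends $F_0$; by maximality it cannot lie in $\mathcal{P}$, hence it meets $J$, and the witness cannot come from the $F_0$-part (as $F_0 \cap J = \emptyset$). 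Thus there are $f_x \in F_0$ and $j_x \in J$ with $(f_x \wedge x') \mathrel{R} j_x$, and symmetrically $f_y \in F_0$, $j_y \in J$ with $(f_y \wedge y') \mathrel{R} j_y$.

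Finally I would combine these with distributivity. Put $f := f_x \wedge f_y \in F_0$. Order-compatibility gives $(f \wedge x') \mathrel{R} (j_x \vee j_y)$ and $(f \wedge y') \mathrel{R} (j_x \vee j_y)$, so by axiom 2
\[ f \wedge (x' \vee y') = (f \wedge x') \vee (f \wedge y') \mathrel{R} j_x \vee j_y . \]
The left-hand side lies in $F_0$ (as $f, x' \vee y' \in F_0$ and $F_0$ is meet-closed), so roundness forces $j_x \vee j_y \in F_0$; but $j_x \vee j_y \in J$ since round ideals are closed under finite joins (Lemma~\ref{lem:idealschar}), contradicting $F_0 \cap J = \emptyset$. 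The main obstacle throughout is the possible non-reflexivity of $R$: one cannot enlarge $F_0$ by $x$ itself, and the interpolation step supplied by join-strongness is exactly what repairs this, while distributivity is what merges the two witnesses into a single element of $F_0 \cap J$.
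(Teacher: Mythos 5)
Your proposal is correct and follows essentially the same route as the paper's proof: a Zorn's Lemma argument on round filters containing $G$ and disjoint from $J$, join-strong interpolation of the witnesses $x', y'$, adjoining each interpolant to $F_0$ to produce proper round-filter extensions that must meet $J$, and a distributivity argument yielding an element of $F_0 \cap J$. The only (immaterial) difference is in the last step: the paper pulls the two witnesses back into $J$ and expands $(b_x \land d') \lor (b_y \land e')$ into a meet of four joins lying in $F_0$, whereas you merge them as $f \land (x' \lor y') \mathrel{R} (j_x \lor j_y)$ and push forward along $R$ into $F_0$ --- an equivalent, slightly tidier computation.
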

\begin{proof}
\renewcommand{\C}{\mathcal{C}}
Let $\C := \{F : F \text{ a round filter}, F \cap J = \emptyset, G \subseteq F\}$. Note that the union of a chain of round filters is a round filter. So, by Zorn's Lemma, we can take a maximal $F_0 \in \C$. It remains to show that $F_0$ is prime.

Suppose, to obtain a contradiction, that $d \lor e \in F_0$, but $d \not\in F_0$ and $e \not\in F_0$. Since $d \lor e \in F_0$, there is $a \in F_0$ such that $a\, R\, d \lor e$. Since $(\D,R)$ is join-strong, there are $d', e'$ such that $a\, R \, d' \lor e'$, $d' R d$ and $e' R e$.

Consider the set
\[ F_0 + d' := R[\{b \land d' \ : \ b \in F_0\}] \]

Note that $F_0 + d'$ is a round filter which contains $F_0$. Moreover, $F_0 \subsetneq F_0 + d'$, since $d \in F_0 + d'$, but $d \not\in F_0$ by assumption. Similarly, we have a round filter $F_0 + e'$ which properly contains $F_0$.

Since $F_0$ is a maximal element of $\C$, we conclude that both $(F_0 + d') \cap J$ and $(F_0 + e') \cap J$ are non-empty. Take elements $x$ and $y$, respectively, in the intersections. Take $b_x, b_y \in F_0$ such that $b_x \land d' \, R \, x$, and $b_y \land e' \, R \, y$. Then $b_x \land d'$ and $b_y \land e'$ are both in $J$, so $z := (b_x \land d') \lor (b_y \land e') \in J$.

On the other hand, using distributivity, we have
\vspace{-2mm}
\begin{align*}
z &= (b_x \land d') \lor (b_y \land e') \\
&= (b_x \lor b_y) \land (b_x \lor e') \land (d' \lor b_y) \land (d' \lor e').
\end{align*}

Note that the first three joins are in $F_0$ because any round filter is a lattice filter. Regarding the last join, since $a R d' \lor e'$ by construction and $a \in F_0$ we have $d' \lor e' \in F_0$. Since $F_0$ is closed under meets, we now conclude that $z \in F_0$.

But then $z \in F_0 \cap J$, giving the desired contradiction.
\renewcommand{\C}{\mathbb{C}}
\end{proof}
The prime round filters of a join-strong distributive proximity lattice will be the points of its dual space. Of course, we also have a prime round ideal theorem for meet-strong distributive proximity lattices.

\begin{dfn}
Let $(\D,R)$ be a join-strong distributive proximity lattice. The {\bf $R$-spectrum} of $(\D,R)$ is the space $X_{(\D,R)}$, whose points are prime round filters of $(\D,R)$ and whose topology is generated by the sets of the form $U_d := \{F : d \in F\}$, for $d \in \D$.
\end{dfn}

The $R$-spectrum can also be obtained via the `geometric' duality.

\begin{lem}[\cite{Jung1996}, Corollary 12]
The $R$-spectrum of a join-strong distributive proximity lattice is homeomorphic to the space of points of the arithmetic frame of round ideals.
\end{lem}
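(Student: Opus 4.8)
The plan is to prove the homeomorphism by hand, exhibiting an explicit bijection between the two underlying point sets and then matching a basis for each topology. Recall that the points of $\pt(R\Idl(\D))$ are the completely prime filters $P$ of the frame $R\Idl(\D)$ of round ideals (\cite{Jung1996}, Theorem 11), with opens $\widehat{I} := \{P : I \in P\}$ for $I \in R\Idl(\D)$, while the points of the $R$-spectrum are the prime round filters, with basic opens $U_d = \{F : d \in F\}$. For $d \in \D$ write $R^{-1}[d] = \{b : bRd\}$ for the principal round ideal (a round ideal by $R \circ R = R$ and axiom 2). The two maps I would use are
\[ \Phi(F) := \{I \in R\Idl(\D) : F \cap I \neq \emptyset\}, \qquad \Psi(P) := \{d \in \D : R^{-1}[d] \in P\}. \]

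First I would check that $\Phi(F)$ is a completely prime filter. Upward closure is immediate; closure under the frame meet follows because binary meets in $R\Idl(\D)$ are just intersections (a short argument from Lemma~\ref{lem:idealschar}, using that round ideals are lattice ideals) together with $F$ being a lattice filter. For complete primeness I would use the explicit description of frame joins, $\bigvee_k I_k = R^{-1}[\{\bigvee B : B \subseteq \bigcup_k I_k \text{ finite}\}]$ (as in the proof of the Prime Round Filter Theorem): if $a \in F \cap \bigvee_k I_k$ then $a \, R \, \bigvee B$ for some finite $B$ in the union, so $\bigvee B \in F$ by $R$-upward closure, and primeness of $F$ yields $b \in B \cap F \subseteq I_k \cap F$ for some $k$. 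Dually I would check that $\Psi(P)$ is a prime round filter. Via Lemma~\ref{lem:idealschar} it suffices to show $R[\Psi(P)] = \Psi(P)$ and closure under binary meets; these follow from $R \circ R = R$, from axiom 3 (which gives $R^{-1}[d] \cap R^{-1}[e] = R^{-1}[d \wedge e]$ and hence $\Psi(P)$ closed under meets), and from the identity $R^{-1}[d] = \bigvee\{R^{-1}[d'] : d'Rd\}$ combined with complete primeness of $P$ (which gives $R[\Psi(P)] = \Psi(P)$). That $\Phi$ and $\Psi$ are mutually inverse then reduces to two roundness equivalences: $d \in F \iff R^{-1}[d] \cap F \neq \emptyset$ (upward closure one way, roundness $F = R[F]$ the other), and $I \in P \iff I \cap \Psi(P) \neq \emptyset$, the latter using $I = \bigvee_{a \in I} R^{-1}[a]$ with complete primeness in one direction and $R^{-1}[a] \subseteq I$ with upward closure in the other.

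For the topology, under $\Phi$ the basic open $U_d$ corresponds precisely to $\widehat{R^{-1}[d]}$, by the equivalence $d \in F \iff R^{-1}[d] \in \Phi(F)$; and every frame-open $\widehat{I}$ equals $\bigcup_{d \in I} \widehat{R^{-1}[d]}$, again because $I = \bigvee_{d \in I} R^{-1}[d]$ and $P$ is completely prime. Hence $\Phi$ carries the basis $\{U_d\}_{d \in \D}$ onto a basis of $\pt(R\Idl(\D))$, so it is a homeomorphism. The step I expect to be the crux—and the only place where join-strongness is genuinely needed—is primeness of $\Psi(P)$: to pass from $\bigvee A \in \Psi(P)$, i.e. $R^{-1}[\bigvee A] \in P$, to some $a \in A \cap \Psi(P)$, one needs the inclusion $R^{-1}[\bigvee A] \subseteq \bigvee_{a \in A} R^{-1}[a]$, which is exactly axiom 4; upward closure and complete primeness of $P$ then finish. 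Everything else is routine bookkeeping with the round-ideal and round-filter axioms.
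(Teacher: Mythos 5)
Your proposal is correct, but it cannot be compared line-by-line with an argument in the paper, because the paper gives none: the statement is imported wholesale as Corollary 12 of \cite{Jung1996}. What you have written is therefore different in kind --- a self-contained verification using only the paper's internal apparatus --- and it checks out. Your maps $\Phi(F) = \{I : F \cap I \neq \emptyset\}$ and $\Psi(P) = \{d : R^{-1}[d] \in P\}$ are mutually inverse for the reasons you give; your formula for joins in $R\Idl(\D)$ is the same one the paper itself uses in Section~\ref{sec:extmorphisms} (the smallest round ideal containing a union), and its correctness rests on the order-compatibility $\leq \circ R \circ \leq \; \subseteq \; R$, which follows from axioms 2 and 3; and your identification of the crux is exactly right: join-strongness is needed only to see that $\Psi(P)$ is prime, via the inclusion $R^{-1}[\bigvee A] \subseteq \bigvee_{a \in A} R^{-1}[a]$, which is axiom 4 in disguise. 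This matches the point the paper makes informally (cf.\ the example in Subsection~\ref{sec:filtersandideals} and Example~\ref{exa:primerfiltersarepoints}) that the points of the round-ideal frame are the prime round filters \emph{because} the presentation is join-strong. Two pieces of bookkeeping are worth writing down rather than waving at: first, that binary intersections of round ideals are again round ideals needs the interpolation property $R = R \circ R$ together with axiom 3, not just Lemma~\ref{lem:idealschar}; second, the nullary cases of primeness, namely $R^{-1}[\bot] \notin \Phi(F)$ (from primeness of $F$ with $A = \emptyset$) and $\bot \notin \Psi(P)$ (from properness of $P$, itself a consequence of complete primeness applied to the empty join). As for what each route buys: the paper's citation keeps things short and leans on the arithmetic-frame machinery of \cite{Jung1996} (their Theorem 11 feeding into Corollary 12); your hands-on bijection is elementary, works pointwise, and makes visible that distributivity of $\D$ enters only to guarantee that $R\Idl(\D)$ is a frame --- the bijection and the basis-matching $\Phi[U_d] = \widehat{R^{-1}[d]}$ never invoke the frame distributive law.
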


\begin{exa}\label{exa:primerfiltersarepoints}
If $X$ is a stably compact space, and $(\D,R)$ is an open-basis presentation of $X$, then a small topological argument will show that prime round filters of $\D$ correspond to completely prime filters of opens. It follows in particular that the $R$-spectrum of an open-basis presentation of $X$ is isomorphic to $X$.
\end{exa}

\subsection{Spectral proximity spaces}\label{subsec:SpPrSp}
In this subsection, we will present the duality between join-strong proximity lattices and stably compact spaces as an application of the categorical construction called `splitting by idempotents'.

We will obtain a clear understanding of the reason {\it why} join-strong distributive proximity lattices with j-morphisms are natural representations of stably compact spaces, which we believe has been present, but hidden, in the existing literature on the subject. We summarize these reasons in three steps:
\begin{enumerate}
\item Stably compact spaces are the Karoubi envelope of spectral spaces,
\item Spectral spaces are equivalent to distributive lattices with j-morphisms,
\item Join-strong distributive proximity lattices are the Karoubi envelope of distributive lattices with j-morphisms.
\end{enumerate}

We first recall the following result of Johnstone \cite{Johnstone1982}, which relates stably compact spaces to spectral spaces. Recall that a {\bf continuous retraction on a space $X$} is a continuous function $f: X \to X$ such that $f(f(x)) = f(x)$ for all $x \in X$, or, more concisely, it is an idempotent in the category $\Top$.

\begin{thm}[\cite{Johnstone1982}, Theorem VII.4.6]\label{thm:stablycompactretractofspectral}
Let $Y$ be a topological space. Then $Y$ is stably compact if and only if there exists a spectral space $X$ and a continuous retraction $f$ on $X$ such that $Y = f[X]$.
\end{thm}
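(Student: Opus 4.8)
The plan is to prove both directions of the equivalence, connecting stably compact spaces to spectral spaces via continuous retractions. I would structure the argument around the \emph{co-compact dual} and the \emph{patch topology}, which are the natural tools here and which the paper has already introduced in Subsection~\ref{subsec:cocompact}.

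For the easier direction, suppose $X$ is spectral and $f : X \to X$ is a continuous retraction with $Y = f[X]$. First I would observe that $Y$, as a retract of $X$, inherits good separation and compactness properties. The key steps are: (i) $Y$ is $T_0$ and sober, since these properties are preserved under the formation of retracts in $\Top$ (a retract of a sober space is sober); (ii) $Y$ is locally compact, because $X$ is spectral hence locally compact, and local compactness transfers to the retract via the continuous idempotent $f$; and (iii) the compact-saturated sets of $Y$ are closed under finite intersection. For (iii) I would use that $f$ is continuous and idempotent to show that compact-saturated sets of $Y$ are precisely the images under $f$ of suitable compact-saturated sets of $X$, and then lean on the fact that $\KS(X)$ is closed under finite intersection (true since $X$ is spectral, hence stably compact). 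Assembling (i)--(iii) gives that $Y$ is stably compact.

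For the harder, and more interesting, direction, suppose $Y$ is stably compact; I must manufacture a spectral space $X$ and a retraction $f$ with $f[X] = Y$. The natural candidate for $X$ is the \emph{patch space} of $Y$: the set $Y$ equipped with the topology generated by the opens of $Y$ together with the complements of compact-saturated sets (equivalently, the join of the topologies of $Y$ and its co-compact dual $Y^d$). The plan is to show that this patch space is \emph{spectral}, which is a standard fact about stably compact spaces: the patch space is compact Hausdorff, zero-dimensional in the relevant ordered sense, and its compact-open sets form a basis closed under finite intersections. Here I would invoke the Nachbin theory of ordered compact spaces alluded to in the excerpt, realizing the patch space together with the specialization order of $Y$ as a Priestley/ordered-Stone space, whose underlying topological space is spectral. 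Then I would define $f : X \to Y$ to be the identity on underlying sets, viewed as a map from the finer patch topology to the coarser topology of $Y$; this is continuous because every open of $Y$ is open in the patch topology. The map $f$ is trivially idempotent once regarded as an endomap, and $f[X] = Y$ as sets.

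The main obstacle is the harder direction, specifically verifying that the patch space is genuinely \emph{spectral} and setting up the retraction so that it lands back in the original topology of $Y$ rather than the patch topology. The subtlety is that a retraction must be an endomorphism $f : X \to X$ in $\Top$, so I cannot literally use a map into $Y$; instead I would need to post-compose with a continuous section-like map realizing $Y$ as a subspace of $X$, or, more cleanly, identify $Y$ with the image of an idempotent continuous endomap of the patch space. The cleanest route is to exhibit $Y$ as the image of a continuous idempotent on $X$ whose action collapses the patch topology back to the original topology; checking that this collapse is continuous as a self-map of $X$ (not merely as a map $X \to Y$) is where the care is required, and I expect the bookkeeping around which sets are open in which topology to be the genuinely delicate point. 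Since the statement is attributed to Johnstone \cite{Johnstone1982}, I would ultimately be content to reduce to the established patch-space machinery rather than reproving spectrality of the patch from scratch.
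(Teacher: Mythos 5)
Your first direction is essentially fine and can be completed along the lines you sketch: for $K \in \KS(Y)$ one checks $K = f[\sat_X(K)]$ (continuity makes $f$ monotone for specialization, and $f$ fixes the points of $Y$), and then $K_1 \cap K_2 = f[\sat_X(K_1) \cap \sat_X(K_2)]$ exhibits the intersection as a continuous image of a set that is compact by stable compactness of $X$; sobriety and local compactness pass to retracts as you say. The converse direction, however, has a genuine gap: it rests on a ``standard fact'' that is false. The patch space of a stably compact space is in general \emph{not} spectral. Take $Y = [0,1]$ with the Scott topology, whose proper nonempty opens are the intervals $(a,1]$; this space is stably compact, its compact saturated sets are the intervals $[a,1]$, and so its patch topology is the Euclidean topology on $[0,1]$. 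Euclidean $[0,1]$ is compact Hausdorff but connected, hence its only compact open subsets are $\emptyset$ and $[0,1]$, which are nowhere near a basis: the patch is not spectral. What Nachbin's theory actually gives is that the patch with the specialization order is a compact \emph{ordered} space; it is order-totally-disconnected (i.e., Priestley, with spectral underlying space) precisely when $Y$ is already spectral, so your construction only applies when there is nothing to prove. There is also a structural obstruction behind the difficulty you half-notice at the end: if $f : X \to X$ is idempotent and acts as the identity on underlying points, then $f[X]$, \emph{with the subspace topology inherited from $X$}, is just $X$ again, never the coarser space $Y$. So no re-topologization of the same point set can work; the spectral space $X$ in the theorem must genuinely have more points than $Y$.

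That is exactly how the real proof goes (the paper itself does not prove this theorem---it cites Johnstone---but its own machinery in Section~\ref{subsec:SpPrSp} contains the construction). Take an open-basis presentation $(\D,R)$ of $Y$ as in Example~\ref{exa:openbasis}, and let $X := \spec(\D,\leq_\D)$ be the Stone dual of the underlying distributive lattice $\D$, whose points are \emph{all} prime lattice filters of $\D$ --- in general strictly more than the points of $Y$. Define $f(F) := R[F]$. Join-strongness of $(\D,R)$ guarantees that $R[F]$ is again a prime filter (this is the same point as in the paper's correspondence between j-morphisms and continuous maps); the axiom $R \circ R = R$ makes $f$ idempotent; continuity is a one-line check on basic opens; and the image of $f$ is the set of prime \emph{round} filters, i.e., the $R$-spectrum of $(\D,R)$, which is homeomorphic to $Y$ by Example~\ref{exa:primerfiltersarepoints}. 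Equivalently, on the frame side (Johnstone's own route), $\Omega(Y)$ is a frame retract of the coherent frame of ideals of $\Omega(Y)$, via $I \mapsto \bigvee I$ and $a \mapsto \{b \ |\ b \ll a\}$, the latter being a frame homomorphism exactly because $\ll$ is stable under finite meets; applying the functor $\pt$ then yields the desired spectral space and continuous retraction.
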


Because of this theorem, one way to understand stably compact spaces is as spectral spaces, `enriched' with a continuous retraction. Although this is presumably how the idea of representing stably compact spaces by enriched distributive lattices came about, we feel this aspect of the theory has been understated in previous work. Our goal in this section is to retrace these steps and make the connection explicit.

Theorem~\ref{thm:stablycompactretractofspectral} motivates the following definitions.
\begin{dfn}
A {\bf spectral proximity space} is a pair $(X,f)$, where $X$ is a spectral space and $f$ is a continuous retraction.

A {\bf continuous proximity function} from a spectral proximity space $(X,f)$ to a spectral proximity space $(X',f')$ is a continuous function $g : X \to X'$ such that $f'g = g = gf$:

\begin{figure}[htp]
\centering
\begin{tikzpicture}[description/.style={fill=white,inner sep=2pt}, bend angle=15, node distance=2cm]
\matrix (m) [matrix of math nodes, row sep=2em, column sep=2em, text height=1.5ex, text depth=0.25ex]
{X & & X' \\
   & &   \\
 X & & X' \\ };
 \draw[->] (m-1-1) to node[auto] {$ g $} (m-1-3);
\draw[->] (m-1-1) to node[auto] {$ f $} (m-3-1);
\draw[->] (m-3-1) to node[auto] {$ g $} (m-3-3);
\draw[->] (m-1-1) to node[auto] {$ g $} (m-3-3);
\draw[->] (m-1-3) to node[auto] {$ f' $} (m-3-3);
\end{tikzpicture}
\end{figure}
We denote the category of spectral proximity spaces with continuous proximity functions by $\cat{SpecPrSp}$. 
\end{dfn}
\begin{rem}
The above definition is a particular case of a categorical construction.\footnote{Thanks to prof. Bart Jacobs for pointing us in this direction, following a presentation at the Radboud Universiteit Nijmegen on June 8, 2010.} The oldest published source for this construction seems to be Exercise B on p. 61 of \cite{Freyd1964}.

An {\bf idempotent} in a category $\C$ is an endomorphism $f$ such that $f^2 = f$. We say an idempotent $f : X \to X$ {\bf splits} if there are morphisms $r: X \to Y$ and $s: Y \to X$ such that $sr = f$ and $rs = 1_Y$. 

Given a category $\C$, we define the category $\C^s$, whose objects are pairs $(X,f)$, where $X$ is an object of $\C$ and $f$ is an idempotent on $X$, and whose morphisms $(X,f) \to (X',f')$ are $\C$-morphisms $g: X \to X'$ such that $f'g = g = gf$. The category $\C^s$ goes by many names: it can be called the {\bf Karoubi envelope}, {\bf Cauchy completion} or {\bf splitting by idempotents} of $\C$. It should be clear that the category $\cat{SpecPrSp}$ that we defined above is exactly the category $\cat{SpecSp}^s$.

The name `splitting by idempotents' for $\C^s$ is the most self-explanatory one: all idempotents in the category $\C^s$ split. Moreover, the natural functor from $\C$ to $\C^s$, which sends an object $X$ to the object $(X, 1_X)$ and is the identity on morphisms, is the universal arrow from $\C$ into a category in which all idempotents split. More explicitly, if $\C \to \B$ is a functor and all idempotents split in $\B$, then there is an (up to natural isomorphism) unique factorisation of this functor through the functor $\C \to \C^s$. It follows in particular that if $\C$ and $\D$ are equivalent categories, then so are $\C^s$ and $\D^s$. Notice that we also have that the category $(\C^s)^\op$ is isomorphic to $(\C^\op)^s$.
\end{rem}

\begin{prop}\label{prop:equivalencespecprspstcosp}
The categories $\cat{SpecPrSp}$ and $\cat{StCoSp}$ are equivalent.
\end{prop}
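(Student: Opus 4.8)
The plan is to realise the equivalence through the \emph{range functor} $\sR : \cat{SpecPrSp} \to \cat{StCoSp}$, which sends a spectral proximity space $(X,f)$ to its image $f[X]$ with the subspace topology, and to exhibit $\H$ as the quasi-inverse. The conceptual reason the statement holds is that $\cat{SpecPrSp}$ is by construction the Karoubi envelope $\cat{SpecSp}^s$, and $\cat{StCoSp}$ is precisely the idempotent completion of $\cat{SpecSp}$ inside topology: the relevant idempotents already split there, with stably compact splitting objects. Concretely, rather than invoke the abstract universal property I would prove the equivalence by hand, exhibiting $\sR$ and checking that it is essentially surjective, full, and faithful. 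The single substantive input will be Theorem~\ref{thm:stablycompactretractofspectral} of Johnstone; everything else is the generic verification that splitting an idempotent is functorial.

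First I would make $\sR$ precise. On objects, $f[X]$ is stably compact by the `if' direction of Theorem~\ref{thm:stablycompactretractofspectral}, so $\sR$ indeed lands in $\cat{StCoSp}$. A morphism $g : (X,f) \to (X',f')$ satisfies $g = f'g = gf$; from $g = f'g$ its image lies in $f'[X']$, and from $g = gf$ it is determined by its values on $f[X]$, so $g$ restricts and corestricts to a continuous map $\sR(g) : f[X] \to f'[X']$. Factoring each retraction as $f = i\,r$ with $r : X \to f[X]$ the corestriction and $i : f[X] \hookrightarrow X$ the inclusion (so that $r\,i = \id$ and $i\,r = f$), functoriality is a short computation. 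Essential surjectivity of $\sR$ is then exactly the `only if' direction of Theorem~\ref{thm:stablycompactretractofspectral}: any stably compact $Y$ equals $f[X]$ for some spectral $X$ and continuous retraction $f$, whence $\sR(X,f) = Y$.

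It remains to check that $g \mapsto \sR(g)$ is a bijection from $\cat{SpecPrSp}((X,f),(X',f'))$ onto $\cat{StCoSp}(f[X],f'[X'])$. Faithfulness is immediate: since $g = gf = g\,i\,r$, the value $g(x)$ is recovered as (the inclusion into $X'$ of) $\sR(g)(r(x))$, so $\sR(g)$ determines $g$. For fullness, given a continuous $h : f[X] \to f'[X']$ I would set $g := i'\,h\,r : X \to X'$; this is a composite of continuous maps, hence a $\cat{SpecSp}$-morphism, and the identities $r\,i = \id$ and $r'\,i' = \id$ give $gf = i'h(ri)r = i'hr = g$ and $f'g = i'(r'i')hr = i'hr = g$, so $g$ is a morphism of $\cat{SpecPrSp}$; moreover $\sR(g) = h$, again using $r\,i = \id$ and that $i'$ is an inclusion. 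This yields the bijection, so $\sR$ is an equivalence, with quasi-inverse $\H$ sending a stably compact space to a spectral hull-with-retraction furnished by Theorem~\ref{thm:stablycompactretractofspectral}.

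The main point requiring care is not a calculation but a choice of conventions: the morphisms of the base category $\cat{SpecSp}$ must be \emph{all} continuous maps (not merely the perfect maps used for the bottom-row duality $\cat{SpecSp}_p \simeq \cat{DLat}$). This is what makes the composite $i'\,h\,r$ in the fullness step a legitimate morphism; were the morphisms restricted to perfect maps, there would be no reason for $i'\,h\,r$ to be perfect and the surjectivity argument would break. I expect this to be the one place a reader could go wrong, so I would flag it explicitly. I would also remark in passing that a canonical, choice-free incarnation of $\H$ can be read off from the remainder of the diagram (through $\cat{SpecSp}_p \simeq \cat{DLat}$ and the Karoubi envelope on the algebraic side), but for the present statement the range functor together with Theorem~\ref{thm:stablycompactretractofspectral} is enough.
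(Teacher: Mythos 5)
Your proposal is correct and is essentially the paper's argument run in the opposite direction: the paper constructs the quasi-inverse $\H : \cat{StCoSp} \to \cat{SpecPrSp}$ (using Theorem~\ref{thm:stablycompactretractofspectral} to choose a spectral hull $(X,f)$ with $f[X]=Y$ for each stably compact $Y$, and setting $\H(h) := h \circ f$ on morphisms) and checks that $\H$ is full, faithful and essentially surjective, whereas you construct the canonical range functor $\sR : (X,f) \mapsto f[X]$ and check the same three properties, with Johnstone's theorem again as the single substantive input. The verifications are mirror images of one another (the same routine computations with $f = i\,r$, $r\,i = \id$), so the two proofs are interchangeable; your flagged point about $\cat{SpecSp}$ carrying all continuous maps rather than perfect ones is correct and consistent with the paper's distinction between $\cat{SpecSp}$ and $\cat{SpecSp}_p$.
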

\begin{proof}
Given a stably compact space $Y$, by Theorem~\ref{thm:stablycompactretractofspectral} we can find a spectral space $X$ and a continuous retraction $f: X \to X$ such that $f[X] = Y$. Let $\H(Y)$ be the spectral proximity space $(X,f)$. If $h : Y \to Y'$ is a continuous function between stably compact spaces, write $\H(Y) = (X,f)$ and $\H(Y') = (X',f')$, and let $\H(h) : (X,f) \to (X',f')$ be the function defined by $\H(h) := h \circ f$. It is easy to see that $\H$ is a well-defined, full and faithful functor. It is moreover essentially surjective, using the other direction of the characterisation of Theorem~\ref{thm:stablycompactretractofspectral}: if $(X,f)$ is a spectral proximity space, then $f[X]$ is a stably compact space.
\end{proof}

By Stone duality, a distributive lattice $\D$ corresponds to the spectral space of its prime lattice filters, $X_\D$. To see how Proposition~\ref{prop:equivalencespecprspstcosp} translates into an enrichment of distributive lattices, we now recall the dual description of continuous functions between spectral spaces, which already appeared in \cite{Abramsky1994}.

\begin{prop}
Let $\D$ and $\E$ be distributive lattices and $X_\D$ and $X_\E$ the associated spectral spaces. There is a one-to-one correspondence between continuous functions $X_\D \to X_\E$ and j-morphisms $(\E, \leq_\E) \to (\D,\leq_\D)$.
\end{prop}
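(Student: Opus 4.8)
The plan is to construct the bijection explicitly in both directions and then verify the two round-trips, using only standard Stone-duality facts about the $U_d$ together with the Prime Round Filter Theorem. Throughout I write points of $X_\D$ as prime filters $P$ of $\D$; I recall that the basic opens $U_d = \{P : d \in P\}$ are exactly the compact-open sets, that $U_d \subseteq U_{d'}$ iff $d \leq_\D d'$, that every open is a union of basic opens, and that for finite $A$, $B$ one has $U_{\bigvee A} = \bigcup_{a \in A} U_a$ (using primeness) and $U_{\bigwedge B} = \bigcap_{b \in B} U_b$; similarly for $\E$.

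\emph{From maps to j-morphisms.} Given a continuous $g : X_\D \to X_\E$, I would define $G_g \subseteq D \times E$ by $d\,G_g\,e$ iff $U_d \subseteq g^{-1}(U_e)$, equivalently iff $e \in g(P)$ whenever $d \in P$, and show that $H_g := G_g^{-1}$ is a j-morphism $(\E,\leq_\E) \to (\D,\leq_\D)$. Proximity axioms (1)--(4) for $G_g$ reduce to monotonicity of $U_{(-)}$ and of $g^{-1}$ together with the displayed formulas for $U_{\bigvee A}$ and $U_{\bigwedge B}$; for instance axiom (3) is the equivalence $\bigcup_{a\in A} U_a \subseteq g^{-1}(U_b) \iff \forall a\in A\, U_a \subseteq g^{-1}(U_b)$. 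Join-approximability is the one step where topology is essential: if $a\,G_g\,\bigvee B$ then $U_a \subseteq g^{-1}(U_{\bigvee B}) = \bigcup_{b\in B} g^{-1}(U_b) = \bigcup\{U_d : d \in G_g^{-1}[B]\}$, and since $U_a$ is compact a finite subfamily covers it, yielding finite $A \subseteq G_g^{-1}[B]$ with $U_a \subseteq U_{\bigvee A}$, i.e. $a \leq_\D \bigvee A$.

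\emph{From j-morphisms to maps.} Given a j-morphism $H$, put $G := H^{-1} \subseteq D \times E$ and define $g_H(P) := G[P] = \{e : \exists d \in P,\ d\,G\,e\}$. I would check $g_H(P)$ is a prime filter of $\E$: it is a filter by the proximity axioms (upward closure from axiom (1), $\top_\E \in g_H(P)$ from axiom (4) with $B = \emptyset$, and closure under binary meet by passing to $d_1 \wedge d_2$ via axiom (2) and then axiom (4)); it is proper and prime using join-approximability, for if $\bigvee A \in g_H(P)$ with $A$ finite, then some $d \in P$ has $d\,G\,\bigvee A$, so there is finite $A' \subseteq G^{-1}[A]$ with $d \leq_\D \bigvee A'$, whence $\bigvee A' \in P$ and, $P$ being prime, some $a' \in A' \cap P$ witnesses an $e \in A \cap g_H(P)$ (the case $A = \emptyset$ would force $\bot_\D \in P$, impossible, giving properness). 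Continuity is immediate, since $g_H^{-1}(U_e) = \bigcup\{U_d : d\,G\,e\}$ is open.

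\emph{The round-trips, and the main obstacle.} Starting from $g$, the identity $g_{H_g} = g$ is pure point-set topology: $e \in g(P)$ iff $P$ lies in the open set $g^{-1}(U_e)$ iff some basic $U_d \subseteq g^{-1}(U_e)$ contains $P$, i.e. iff some $d \in P$ has $d\,G_g\,e$. Starting from $H$, the inclusion $G \subseteq G_{g_H}$ is immediate, and the reverse inclusion is the crux and the only step genuinely requiring choice: assuming $\neg(d\,G\,e)$, note by Lemma~\ref{lem:proximitymorphismidealsfilters} that $J_e := H[e] = \{d' : d'\,G\,e\}$ is a round, hence lattice, ideal of $\D$, and $d \notin J_e$ gives ${\uparrow}d \cap J_e = \emptyset$; the Prime Round Filter Theorem then yields a prime filter $P \supseteq {\uparrow}d$ disjoint from $J_e$, so $d \in P$ while $e \notin g_H(P)$, i.e. $\neg(d\,G_{g_H}\,e)$. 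The main obstacle is precisely this separation of a filter from an ideal by a prime filter; everything else is routine manipulation of the proximity axioms and of the basic opens, apart from the compactness argument that drives join-approximability.
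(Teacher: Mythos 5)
Your proof is correct and takes essentially the same route as the paper: the paper's (very brief) proof consists exactly of your j-morphism-to-map direction, sending a j-morphism $T$ to the map $F \mapsto T^{-1}[F]$ and observing that join-approximability is precisely what makes the image filter prime. The rest of your argument --- the inverse construction $d\,G_g\,e \iff U_d \subseteq g^{-1}(U_e)$, the compactness argument for join-approximability, and the prime-filter separation establishing $G_{g_H} = G$ --- correctly fills in the verification that the paper leaves implicit by deferring to \cite{Abramsky1994}.
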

\begin{proof}
The correspondence sends a j-morphism $T: (\E, \leq_\E) \to (\D,\leq_\D)$ to the map $f_T : X_\D \to X_\E$ given by $F \mapsto T^{-1}[F]$. The crucial thing to note here is that $T^{-1}$ sends prime filters to prime filters if $T$ is a j-morphism. 
\end{proof}

Categorically, we now have the following result.
\begin{cor}
The category $\DLat_j$ of distributive lattices with j-morphisms between them is dually equivalent to the category of spectral spaces with continuous functions.

In particular, the Karoubi envelope of $\DLat_j$ is dually equivalent to the Karoubi envelope of $\cat{SpecSp}$.
\end{cor}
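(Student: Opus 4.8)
The plan is to assemble the object-level Stone correspondence and the hom-set correspondence of the preceding Proposition into a single contravariant functor, and then invoke the principle that a full, faithful, essentially surjective contravariant functor is a dual equivalence; the Karoubi-envelope clause will then follow formally from the splitting-by-idempotents facts recorded above. Concretely, I would define a contravariant functor $S : \DLat_j \to \cat{SpecSp}$ by $S(\D) := X_\D$ on objects and, on a j-morphism $T : (\E,\leq_\E) \to (\D,\leq_\D)$, by $S(T) := f_T$, the continuous map $F \mapsto T^{-1}[F]$ of the preceding Proposition. With $S$ so defined, fullness and faithfulness are immediate: they are precisely the assertion of that Proposition that $T \mapsto f_T$ is a bijection between $\DLat_j((\E,\leq_\E),(\D,\leq_\D))$ and $\cat{SpecSp}(X_\D,X_\E)$. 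Essential surjectivity is the object part of Stone's correspondence $(\KO,\spec\!)$, since every spectral space $X$ is homeomorphic to $X_{\KO(X)}$ and hence lies in the essential image of $S$.

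The one genuine verification, and the step where I expect the main difficulty, is that $S$ really is a contravariant functor; the care required here is entirely in keeping the reversed direction conventions of Remark~\ref{rem:directionmorphisms} straight. For identities, the identity j-morphism on $(\D,\leq_\D)$ is the relation ${\geq_\D}$, so $f_{\geq_\D}(F) = (\leq_\D)[F]$ is the upward closure of the filter $F$, which equals $F$; thus $S$ preserves identities. For composition, if $T$ and $U$ are composable j-morphisms with relational (diagrammatic) composite $U \circ T$, then from $(U \circ T)^{-1} = T^{-1} \circ U^{-1}$ together with the set-image identity $(P \circ Q)[F] = Q[P[F]]$ one obtains $f_{U \circ T}(F) = U^{-1}[T^{-1}[F]] = f_U(f_T(F))$; since composition in $\JSPL$ is diagrammatic, this is exactly the arrow-reversing behaviour a dual functor must have. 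A full, faithful and essentially surjective contravariant functor is a dual equivalence, which proves the first assertion.

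For the second assertion I would argue purely formally. A dual equivalence $\DLat_j \simeq \cat{SpecSp}^\op$ is in particular an ordinary equivalence, and equivalent categories have equivalent Karoubi envelopes, so $(\DLat_j)^s \simeq (\cat{SpecSp}^\op)^s$. Combining this with the isomorphism $(\C^\op)^s \cong (\C^s)^\op$ recorded above, applied to $\C = \cat{SpecSp}$, gives $(\DLat_j)^s \simeq ((\cat{SpecSp})^s)^\op$; that is, the Karoubi envelope of $\DLat_j$ is dually equivalent to the Karoubi envelope of $\cat{SpecSp}$, as claimed. Apart from the functoriality bookkeeping of the second paragraph, every ingredient is a direct citation of the preceding Proposition, of Stone duality, and of the formal properties of splitting by idempotents.
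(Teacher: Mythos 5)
Your proposal is correct and follows exactly the route the paper intends: the paper states this corollary without an explicit proof, treating it as an immediate consequence of the preceding Proposition (the hom-set bijection $T \mapsto f_T$), the object part of Stone's correspondence $(\KO,\spec\!)$, and the formal Karoubi-envelope facts recorded in the remark (equivalences lift to Karoubi envelopes, and $(\C^\op)^s \isom (\C^s)^\op$). Your write-up simply makes explicit the functoriality bookkeeping (identities via $R^{-1} = {\geq_\D}$ and composition via $(U\circ T)^{-1} = T^{-1}\circ U^{-1}$) that the paper leaves implicit, and all of those checks are carried out correctly.
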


To complete our categorical considerations, we observe the following consequence of the definition of join-strong proximity lattices.
\begin{prop}
The Karoubi envelope of $\DLat_j$ is the category $\JSDPL$.
\end{prop}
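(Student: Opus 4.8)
The plan is to unwind both categories down to the level of relations and match them condition by condition, using the functor $\F$ of Proposition~\ref{prop:Fisfunctor} as the canonical embedding $\DLat_j \hookrightarrow \JSDPL$ that sends $\D$ to $(\D,\le_\D)$. Recall that an object of the Karoubi envelope $\DLat_j^s$ is a pair $(\D,f)$ with $\D$ a distributive lattice and $f$ an idempotent in $\DLat_j$, i.e. a j-morphism $f:(\D,\le)\to(\D,\le)$ with $f\circ f=f$, and that a morphism $(\D,f)\to(\E,f')$ is a j-morphism $g$ absorbing both idempotents.

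First I would set up the object correspondence via $R:=f^{-1}$ and check that $(\D,R)$ is a join-strong distributive proximity lattice. The translation should proceed as follows. Taking converses, the idempotency $f\circ f=f$ becomes exactly $R\circ R=R$, which is proximity-lattice axiom 1. The conditions making $f^{-1}=R$ a proximity relation from $(\D,\le)$ to $(\D,\le)$ are precisely axioms 2 and 3, while the absorption laws $R\circ\le=R$ and $\le\circ R=R$ are automatic consequences of axioms 2 and 3 (they say $R$ is upward closed in its second coordinate and downward closed in its first). Finally, join-approximability of the idempotent $f^{-1}$ should match join-strongness, axiom 4. Conversely, every $(\D,R)\in\JSDPL$ produces a candidate idempotent $f=R^{-1}$ on $(\D,\le)$.

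Next I would match morphisms. A morphism $(\D,f)\to(\E,f')$ is a j-morphism $g$ with $f'\circ g=g=g\circ f$; setting $R=f^{-1}$ and $R'=(f')^{-1}$, I would verify these are exactly the j-morphisms $(\D,R)\to(\E,R')$ of $\JSDPL$. Indeed, once $g$ is identified with the underlying relation $H$, the two absorption equations are precisely the conditions $R'\circ H^{-1}=H^{-1}$ and $H^{-1}\circ R=H^{-1}$ built into the definition of a proximity morphism, and the remaining proximity-relation and join-approximability clauses on $H^{-1}$ are the defining clauses of a j-morphism. Since the identity of $(\D,R)$ in $\JSDPL$ is $R^{-1}=f$, the assignment respects identities and composition, so it is functorial and fully faithful.

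The main obstacle is the equivalence of \emph{join-approximability of the idempotent} with \emph{join-strongness of $(\D,R)$}. One direction, join-approximable $\Rightarrow$ join-strong, follows from interpolating with $R\circ R=R$ and then using $R\circ\le=R$. The reverse is delicate precisely because $R$ need not be increasing: join-strongness delivers $a\,R\,\bigvee B'$, whereas join-approximability demands $a\le\bigvee A$, and these agree only when $R\subseteq\le$ (a non-increasing join-strong $R$ can genuinely fail the $\le$-form). I would therefore not prove a strict isomorphism of categories on the nose, but invoke the universal property of the Karoubi envelope: $\JSDPL$ has split idempotents, so $\F$ extends to a comparison functor $\widehat{\F}:\DLat_j^s\to\JSDPL$; fullness and faithfulness come from the morphism correspondence above, and essential surjectivity follows from Proposition~\ref{prop:increasingisom}, since every object of $\JSDPL$ is j-isomorphic to an increasing one $(R\Idl(\D),\ll)$, for which join-strongness and join-approximability coincide and which is visibly the splitting of the idempotent $\ll^{-1}$ on $(R\Idl(\D),\le)$. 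This exhibits $\widehat{\F}$ as an equivalence, which is the sense in which the Karoubi envelope of $\DLat_j$ is $\JSDPL$.
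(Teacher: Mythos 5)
Your proof is correct, but it is doing genuinely more than the paper does: the paper offers no proof at all, stating the proposition as an observation that follows ``from the definition'' and even writing $(\DLat_j^s)^\op = \JSDPL^\op$ as an \emph{equality} in the chain of equivalences preceding Theorem~\ref{thm:goalduality}. Your analysis shows that this literal identification is untenable. The object correspondence $(\D,f)\mapsto(\D,f^{-1})$ is forced (identities must match, and the identity of $(\D,R)$ in $\JSDPL$ is $R^{-1}$ while that of $(\D,f)$ in the Karoubi envelope is $f$), but join-approximability of the idempotent $f$ --- whose conclusion is $a\le\bigvee A$, the proximity on $(\D,\le)$ being $\le$ --- is strictly stronger than join-strongness of $(\D,f^{-1})$, whose conclusion is $a\,R\,\bigvee B'$. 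A concrete witness: for any nontrivial distributive lattice $\D$, the total relation $R=D\times D$ makes $(\D,R)$ an object of $\JSDPL$, yet taking $B=\emptyset$ in the $\le$-form shows $R^{-1}=R$ is not a j-morphism on $(\D,\le)$, since one would need $a\le\bot$ whenever $a\,R\,\bot$; so $(\D,R)$ is not in the image of the correspondence. Hence the proposition holds only up to equivalence of categories --- which is all that Theorem~\ref{thm:goalduality} requires, every other link in the chain being an equivalence --- and your repair (a fully faithful comparison functor, essentially surjective by Proposition~\ref{prop:increasingisom}, because for \emph{increasing} relations join-strongness does yield the $\le$-form) is exactly the right one. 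Where the paper asserts, you prove, and you locate the precise point (non-increasing relations) at which the naive assertion breaks.

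Two loose ends you should tighten. First, the morphism clauses do not match ``precisely'' either: a j-morphism $H:(\D,R)\to(\E,R')$ of $\JSDPL$ carries the conclusion $a\,R'\,\bigvee A$, whereas a morphism of the Karoubi envelope carries $a\le\bigvee A$. For objects in the image of your correspondence these are equivalent --- one direction by interpolating through $R'\circ H^{-1}=H^{-1}$ and then using $R'\circ{\le}=R'$, the other by applying the $\le$-form of $R'$ and then $H\circ(R')^{-1}=H$ --- and you need the second direction for fullness, so it deserves to be spelled out rather than asserted. Second, extending $\F$ via the universal property of the Karoubi envelope presupposes that idempotents split in $\JSDPL$, which you assert without proof (and which is most naturally obtained as a \emph{corollary} of the equivalence, so the appeal risks circularity). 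This is harmless, because your object and morphism correspondences already define the comparison functor $\widehat{\F}$ directly; simply drop the appeal to the universal property.
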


Putting all the facts from this section together, we get the following chain of equivalent categories:
\[  \cat{StCoSp} \simeq \cat{SpecPrSp} = (\cat{SpecSp})^s \simeq (\DLat_j^\op)^s \isom (\DLat_j^s)^\op = \JSDPL^\op,\]
so that we have proved
\begin{thm}\label{thm:goalduality}
The category $\cat{StCoSp}$ is dually equivalent to the category $\JSDPL$.
\end{thm}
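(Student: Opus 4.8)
The plan is to assemble the chain of equivalences displayed just before the theorem, justifying each link and then concatenating them. The final statement $\cat{StCoSp} \simeq \JSDPL^\op$ is exactly the composite, so the work is entirely in verifying that each of the five stages is legitimate and that the composition is valid. First I would invoke Proposition~\ref{prop:equivalencespecprspstcosp} for the first equivalence $\cat{StCoSp} \simeq \cat{SpecPrSp}$. Next, I would use the definitional identity $\cat{SpecPrSp} = (\cat{SpecSp})^s$, which is recorded in the Remark following the definition of spectral proximity spaces: the Karoubi envelope of $\cat{SpecSp}$ has exactly pairs $(X,f)$ with $f$ a continuous retraction as objects, and continuous proximity functions as morphisms.

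The third link, $(\cat{SpecSp})^s \simeq (\DLat_j^\op)^s$, is the one requiring the most care, and I expect it to be the main obstacle. It rests on the Corollary stating that $\DLat_j$ is dually equivalent to $\cat{SpecSp}$, i.e.\ $\cat{SpecSp} \simeq \DLat_j^\op$. To pass this equivalence through the splitting-by-idempotents construction, I would appeal to the functoriality property of the Karoubi envelope noted in the Remark: if $\C$ and $\D$ are equivalent categories, then so are $\C^s$ and $\D^s$. The delicate point is that one must check this functoriality is being applied correctly to an equivalence rather than a mere duality; here the equivalence is between $\cat{SpecSp}$ and $\DLat_j^\op$ (not $\DLat_j$ itself), so that I am splitting idempotents in $\DLat_j^\op$. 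This is why I must be careful with the opposite-category bookkeeping in the next step.

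The fourth link, $(\DLat_j^\op)^s \isom (\DLat_j^s)^\op$, is the isomorphism $(\C^s)^\op \isom (\C^\op)^s$, also recorded in the Remark with $\C = \DLat_j$. This identity lets me move the opposite outside the splitting operation, which is essential because the final target is phrased with the opposite on the outside. The fifth and last link, $(\DLat_j^s)^\op = \JSDPL^\op$, follows by applying the $\op$ to the Proposition asserting that the Karoubi envelope of $\DLat_j$ \emph{is} the category $\JSDPL$ (an equality, stemming directly from the definition of join-strong distributive proximity lattices as formal splittings of idempotent j-morphisms on distributive lattices).

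Having justified each of the five stages, I would conclude by transitivity: composing the equivalence, equality, equivalence, isomorphism, and equality yields $\cat{StCoSp} \simeq \JSDPL^\op$. Since an equivalence between $\A$ and $\B^\op$ is precisely a dual equivalence between $\A$ and $\B$, this says that $\cat{StCoSp}$ is dually equivalent to $\JSDPL$, which is the assertion of the theorem. The proof is therefore short, amounting to citing the preceding results in the correct order; the only genuine subtlety is keeping track of where the $\op$ sits at each stage, and ensuring that the Karoubi-envelope construction is applied to an equivalence (for the third link) and to the identity-on-objects opposite (for the fourth).
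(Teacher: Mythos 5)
Your proposal is correct and follows exactly the paper's own proof: the paper establishes the theorem precisely by the chain $\cat{StCoSp} \simeq \cat{SpecPrSp} = (\cat{SpecSp})^s \simeq (\DLat_j^\op)^s \isom (\DLat_j^s)^\op = \JSDPL^\op$, citing the same propositions and Karoubi-envelope facts you invoke, including the $(\C^s)^\op \isom (\C^\op)^s$ bookkeeping. Your extra care about where the $\op$ sits at each link is exactly the right (and only) subtlety.
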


\subsection{Canonical extension via duality}\label{subsec:connection}
If we assume the Axiom of Choice, then the existence of the canonical extension can be proved via the spectrum. Specifically, we have the following result.

\begin{thm}\label{thm:canextviaduality}
Let $(\D,R)$ be a join-strong distributive proximity lattice. Let $\S$ be the complete lattice of saturated sets of $\spec(\D,R)$. Then $h : \D \to \S$, defined by $d \mapsto \{F : d \in F\}$, is a $\pi$-canonical extension of $(\D,R)$.
\end{thm}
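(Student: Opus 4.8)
The plan is to verify that the concrete extension $h : \D \to \S$ into the saturated sets of the spectrum satisfies the three defining properties of a $\pi$-canonical extension, and then invoke the uniqueness result (Proposition~\ref{prop:uniquenesspiextension}) to conclude. The most efficient route is to identify $\S$, the lattice of saturated sets of $X := \spec(\D,R)$, with the abstract canonical extension $\G(X,Y,Z)$ built in Subsection~\ref{sec:canext}, where $X$ is the set of round filters and $Y$ the set of round ideals. By the Prime Round Filter Theorem, the spectrum has enough points, and this is exactly the ingredient that lets us replace abstract round filters by honest prime round filters (points of the space). So first I would set up the correspondence between saturated subsets of the spectrum and closed sets of the polarity, using that a saturated set is an intersection of basic opens $U_d = \{F : d \in F\}$ and that $h(d) = U_d$.

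Concretely, I would proceed as follows. \textbf{Homomorphism.} Since $\S$ is ordered by inclusion and $h(d) = \{F : d \in F\}$ ranges over prime round filters, preservation of finite meets and of $\top,\bot$ is immediate from the filter axioms; preservation of binary joins uses join-strongness together with primeness of the filters, exactly as in Lemma~\ref{lem:hishom} but now in the point-set picture. \textbf{$R$-join-preserving.} For any prime round filter $F$ containing $a$, roundness gives $b \in F$ with $bRa$; hence $h(a) = \bigcup\{h(b) : bRa\}$ as subsets of $X$, and since joins of open (ideal) elements are computed by union followed by saturation, this yields $h(a) = \bigvee\{h(b) : bRa\}$ in $\S$. \textbf{$R$-dense and $R$-compact.} Here I would show that the round filter elements of $\S$ are precisely the (saturations of) sets of prime round filters containing a fixed round filter $F$, and the round ideal elements are the opens $\bigcup_{d \in I} U_d$ for round ideals $I$. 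Denseness then says every saturated set is the intersection of the basic opens containing it (which holds by definition of the topology) and dually every saturated set is a union of its round-closed below-approximants; compactness reduces, via Proposition~\ref{prop:Rcompactchar}, to showing that if $\bigwedge h[F] \leq \bigvee h[I]$ then $F \cap I \neq \emptyset$, which is precisely where the Prime Round Filter Theorem is used: if $F \cap I = \emptyset$ we separate them by a prime round filter $F_0 \supseteq F$ disjoint from $I$, giving a point witnessing $\bigwedge h[F] \nleq \bigvee h[I]$.

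The main obstacle I anticipate is the careful identification of the round filter elements of $\S$ with intersections of basic opens over points of the spectrum, i.e.\ showing that $\bigwedge h[F]$, computed in the lattice of saturated sets, equals $\{F_0 \text{ prime round filter} : F \subseteq F_0\}$. This is the point-set analogue of Lemma~\ref{lem:hpreservesfiltersandideals}, and it is exactly the place where the Axiom of Choice (via the Prime Round Filter Theorem) is indispensable: one inclusion is formal, but the reverse inclusion requires that there are enough prime round filters to detect the meet. Once this identification is in hand, denseness and compactness fall out of the corresponding separation statements, and the uniqueness of the $\pi$-extension finishes the proof by producing the required isomorphism with the abstractly constructed $(\L,R)^\pi$.
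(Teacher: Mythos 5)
Your overall route is the same as the paper's: check that $h$ is a homomorphism, identify the round ideal and round filter elements of $\S$ concretely, read off $R$-denseness from that identification, prove $R$-compactness via the Prime Round Filter Theorem (through Proposition~\ref{prop:Rcompactchar}), and get $R$-join-preservation from roundness of filters. Those last two arguments are exactly right. Note, however, that the closing appeal to Proposition~\ref{prop:uniquenesspiextension} is superfluous: the theorem asserts only that $h$ \emph{is} a $\pi$-canonical extension, so verifying the three defining properties already finishes the proof; no comparison with the abstract construction is required.

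Two steps are stated incorrectly, though both are repairable. First, the claim that ``every saturated set is the intersection of the \emph{basic} opens containing it'' is false in general: take $\D$ a free Boolean algebra on countably many generators with $R = {\leq}$ (a join-strong distributive proximity lattice by Proposition~\ref{prop:Fisfunctor}), so that $\spec(\D,R)$ is the Cantor space and the basic opens are the clopens; a non-clopen open set is saturated but cannot be an intersection of clopens, since such an intersection is closed. What $R$-denseness actually needs is that every saturated set is the intersection of \emph{all} opens containing it (true by definition of saturated), combined with the lemma---which is the paper's identification, and which you assert one sentence earlier---that every open of $\spec(\D,R)$ is a round ideal element: given $W = \bigcup_{d \in D'} U_d$, the set $I = \{b : b \mathbin{R} \bigvee A \text{ for some finite } A \subseteq D'\}$ is a round ideal with $\bigvee h[I] = W$, using $R[F] = F$ for points $F$. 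Second, you mislocate the use of the Axiom of Choice. The identification $\bigwedge h[F] = \{F_0 : F \subseteq F_0\}$ is purely formal: meets in $\S$ are set-theoretic intersections (an intersection of intersections of opens is an intersection of opens), so both inclusions are immediate and no prime round filters need to be produced. Choice enters only where your second paragraph correctly puts it: through the Prime Round Filter Theorem in the proof of $R$-compactness, which is also the only place the paper invokes it.
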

\begin{proof}
It is not hard to see that $h$ is a homomorphism. One may further show that the round ideal elements of the extension $h : \D \to \S$ are exactly the open sets of $\spec(\D,R)$, and that the round filter elements are exactly the compact saturated sets of $\spec(\D,R)$. From this, $R$-denseness follows. For $R$-compactness, one uses the Prime Round Filter Theorem\footnote{As an anonymous referee pointed out, one could alternatively prove $R$-compactness by invoking the fact that stably compact spaces are {\it well-filtered}, following the terminology of \cite{Gierz03}, p. 147.}. The fact that $h$ is $R$-join-preserving is immediate from the definition of round filters.
\end{proof}
We also have the dual result, replacing join-strong by meet-strong and the prime round filter spectrum by the prime round ideal spectrum, cf. Remark~\ref{rem:canonicalextension}(2).

Finally, we remark on how spectral spaces fit in this picture. We already observed in Proposition~\ref{prop:charreflexive} that the $\pi$- and $\sigma$-extension of a doubly strong proximity lattice $(\L,R)$ coincide if and only if the relation $R$ is reflexive. 

If the underlying lattice is distributive, this situation relates to the dual space being spectral, as follows.

\begin{prop}\label{prop:spectralcase}
Let $(\D,R)$ be a distributive join-strong proximity lattice. The following are equivalent.
\begin{enumerate}
\item $(\D,R)$ is j-isomorphic to some distributive proximity lattice of the form $(\E,\leq_\E)$,
\item $(\D,R)$ is j-isomorphic to some distributive proximity lattice $(\E,S)$ with $S$ reflexive,
\item $\spec(\D,R)$ is a spectral space.
\end{enumerate}
\end{prop}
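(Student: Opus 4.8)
The plan is to prove the cycle $(1)\Rightarrow(2)\Rightarrow(3)\Rightarrow(1)$, moving freely between proximity lattices and their spectra via the dual equivalence $\cat{StCoSp}\simeq\JSDPL^\op$ of Theorem~\ref{thm:goalduality}. The standing observation I would use throughout is that, since the assignment $(\D,R)\mapsto\spec(\D,R)$ is (the object part of) this dual equivalence (cf. Example~\ref{exa:primerfiltersarepoints}), it both preserves and reflects isomorphisms: two join-strong distributive proximity lattices are j-isomorphic if and only if their $R$-spectra are homeomorphic. In particular, being a spectral space is invariant under j-isomorphism. Granting this, the implication $(1)\Rightarrow(2)$ is immediate, since the lattice order $\leq_\E$ is reflexive.

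For $(2)\Rightarrow(3)$ I would show that reflexivity of $S$ forces the basic opens of $\spec(\E,S)$ to be compact, which is exactly the feature separating spectral from merely stably compact spaces. First note that the basic open $U_e=\{F : e\in F\}$ equals $\{F : F\cap S^{-1}[e]\neq\emptyset\}$, using that round filters satisfy $S[F]=F$; so $U_e$ is the open set associated with the round ideal $S^{-1}[e]$. Invoking the description of the way-below relation on $S\Idl(\E)$ from the proof of Proposition~\ref{prop:increasingisom} (namely $I\ll J$ iff there is $d\in J$ with $I\subseteq S^{-1}[d]$), reflexivity gives $e\in S^{-1}[e]$, whence $S^{-1}[e]\ll S^{-1}[e]$. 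Thus $S^{-1}[e]$ is a compact element of the frame of round ideals and $U_e$ is compact-open. Since $U_e\cap U_{e'}=U_{e\wedge e'}$ and, using primeness of the points, $U_e\cup U_{e'}=U_{e\vee e'}$, every compact-open set is of the form $U_e$, so the compact-opens form a basis closed under finite intersections. As $\spec(\E,S)$ is $T_0$ and sober (being stably compact), it is spectral, and hence so is the homeomorphic space $\spec(\D,R)$.

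For $(3)\Rightarrow(1)$ I would run the previous argument backwards through Stone duality. Writing $X:=\spec(\D,R)$ and assuming $X$ spectral, set $\E:=\KO(X)$, the distributive lattice of compact-open sets, and form the join-strong distributive proximity lattice $\F(\E)=(\E,\leq_\E)$. Its $R$-spectrum is the space of prime (equivalently, round) filters of $\E$, which by Stone duality for spectral spaces is homeomorphic to $X$. Hence $\spec(\E,\leq_\E)\cong\spec(\D,R)$, and reflecting this isomorphism back through the dual equivalence of Theorem~\ref{thm:goalduality} yields a j-isomorphism $(\D,R)\cong(\E,\leq_\E)$, which is of the required form.

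The main obstacle is the step $(2)\Rightarrow(3)$: everything else is bookkeeping with the duality, but this is the only place where reflexivity does genuine work. The point to get right is that reflexivity is precisely what makes each round ideal $S^{-1}[e]$ way-below itself, i.e.\ what upgrades the continuous frame of round ideals to an algebraic (coherent) one and the spectrum to a spectral space. I would also take care that reflexivity need not survive the passage to the increasing representative $(S\Idl(\E),\ll)$ of $(\E,S)$ from Proposition~\ref{prop:increasingisom}, so the compactness computation must be performed on $(\E,S)$ itself rather than after normalising the relation to be increasing.
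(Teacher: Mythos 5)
Your proposal is correct, and its skeleton is the same as the paper's: the identical cycle $(1)\Rightarrow(2)\Rightarrow(3)\Rightarrow(1)$, the same tacit use of the fact that $\spec$ is (part of) an equivalence, so that j-isomorphic proximity lattices have homeomorphic spectra, and the same endgame for $(3)\Rightarrow(1)$, namely taking $\E = \KO(\spec(\D,R))$, invoking Stone duality to get $\spec(\D,R) \isom \spec(\E,\leq_\E)$, and reflecting this homeomorphism back through Theorem~\ref{thm:goalduality} to obtain the j-isomorphism. The one place you diverge is $(2)\Rightarrow(3)$: the paper proves compactness of each basic open $U_d$ by a direct application of the Prime Round Filter Theorem, whereas you identify $U_e$ with the round ideal $S^{-1}[e]$, use reflexivity together with the way-below computation of Proposition~\ref{prop:increasingisom} to see that $S^{-1}[e]$ is way-below itself, and then transfer compactness of this frame element to compactness of the open set $U_e$. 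That transfer is sound, but be aware that it relies on the frame of round ideals being isomorphic to the frame of opens of $\spec(\E,S)$ (the unlabelled lemma quoted from \cite{Jung1996}, Corollary 12), and that is precisely where the Prime Round Filter Theorem, i.e.\ the Axiom of Choice, is hidden; so the two arguments have the same logical content and the same choice-theoretic cost. What your phrasing buys is a conceptual reading the paper leaves implicit: reflexivity is exactly what upgrades the arithmetic frame of round ideals to an algebraic (coherent) frame, which is the point-free counterpart of spectrality. Your closing caveat, that reflexivity must be exploited on $(\E,S)$ itself rather than on its increasing normalisation $(S\Idl(\E),\ll)$, is also well taken, since reflexivity is not preserved by that passage.
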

\begin{proof}
(i) $\implies$ (ii) is trivial.

For (ii) $\implies$ (iii), it suffices to show that if $R$ is reflexive, then $\spec(\D,R)$ is spectral. A straight-forward application of the Prime Round Filter Theorem proves that each basic open set $U_d$ is compact in this situation, so that $\{U_d\}_{d \in \D}$ is a basis of compact open sets for $\spec(\D,R)$.

For (iii) $\implies$ (i), take the distributive lattice $\E$ of compact open sets of the spectral space $\spec(\D,R)$. By Stone Duality for distributive lattices, $\spec(\D,R) \isom \spec(\E,\leq_\E)$. Hence, the proximity lattices $(\D,R)$ and $(\E,\leq)$ must be j-isomorphic by Theorem~\ref{thm:goalduality}. 
\end{proof}

Looking back at Proposition~\ref{prop:charreflexive}, we now see that in the distributive case, we can conclude something more from the assumption that the relation $R$ is reflexive.
\begin{cor}
If $(\D,R)$ is a doubly strong distributive proximity lattice and $R$ is reflexive, then there is a distributive lattice $\E$ such that $(\D,R)$ is both j- and m-isomorphic to $(\E,\leq_\E)$.
\end{cor}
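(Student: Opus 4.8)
The plan is to combine the two preceding results, namely Proposition~\ref{prop:charreflexive} and Proposition~\ref{prop:spectralcase}, and to exploit the self-duality of the situation. Since we are given that $(\D,R)$ is a \emph{doubly} strong distributive proximity lattice with $R$ reflexive, I would first invoke Proposition~\ref{prop:spectralcase}: because $(\D,R)$ is distributive and join-strong with $R$ reflexive, condition (ii) of that proposition holds, so by the implication (ii)~$\implies$~(i) there is a distributive lattice $\E$ with $(\D,R)$ j-isomorphic to $(\E,\leq_\E)$. This immediately gives the j-isomorphism half of the conclusion, with $\E$ the lattice of compact open sets of the (now spectral) space $\spec(\D,R)$.

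The key observation is that $\E$ can be taken to be \emph{the same} lattice for both the j- and the m-isomorphism, and this is where the order-duality of the whole theory must be used. My plan is to note that since $(\D,R)$ is also meet-strong, the order-dual pair $(\D^\op, R^{-1})$ is a join-strong distributive proximity lattice, and $R^{-1}$ is reflexive precisely because $R$ is. Applying the meet-strong analogue of Proposition~\ref{prop:spectralcase} (whose existence is guaranteed by the standing remark that join-strong and meet-strong theories are order-dual) to $(\D,R)$ directly, I would obtain that $(\D,R)$ is m-isomorphic to some $(\E',\leq_{\E'}^{\op})$, where $\E'$ is a distributive lattice. It then remains to identify $\E'$ with $\E$.

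The step I expect to be the main obstacle is precisely this identification: showing that one may choose a single distributive lattice $\E$ realizing both isomorphisms simultaneously. The cleanest route is to use the spectrum as a common reference point. By Proposition~\ref{prop:spectralcase}(iii), $\spec(\D,R)$ is a spectral space $X$, and under the reflexivity hypothesis Proposition~\ref{prop:charreflexive} tells us the $\pi$- and $\sigma$-extensions coincide, reflecting the fact that $R$ carries the same information whether read join-wise or meet-wise. I would take $\E := \KO(X)$, the distributive lattice of compact open sets of $X$; Stone duality then yields $X \isom \spec(\E,\leq_\E)$, and by Theorem~\ref{thm:goalduality} the j-isomorphism $(\D,R) \cong (\E,\leq_\E)$ follows as in the proof of Proposition~\ref{prop:spectralcase}. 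For the m-isomorphism, since $X$ is spectral its compact open sets \emph{also} form a basis of compact-open members of $\KS(X)$ closed under the relevant operations, so the compsat-basis presentation of $X$ coincides, up to order-dual relabelling, with $(\E,\leq_\E)$ read as a meet-strong proximity lattice; the reflexivity of the order relation is exactly what makes the compsat relation equal to $\leq_\E^{\op}$ rather than a strictly coarser proximity. Hence the same $\E$ serves both purposes.

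Once this is in place, the corollary follows by assembling the two isomorphisms. I would keep the argument short by appealing to Proposition~\ref{prop:spectralcase} for the j-direction and its explicitly-stated order-dual counterpart for the m-direction, emphasizing only the point that the spectral space $X = \spec(\D,R)$ and its lattice of compact opens $\E = \KO(X)$ are canonically determined by $(\D,R)$, so that there is genuinely \emph{one} lattice $\E$ witnessing both isomorphisms rather than two \emph{a priori} distinct ones.
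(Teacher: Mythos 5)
Your overall skeleton --- Proposition~\ref{prop:spectralcase} for the j\nobreakdash-half, its order-dual for the m\nobreakdash-half, and then an identification of the two resulting lattices --- is the natural one, and it matches what the paper (which states this corollary without proof) intends: the j\nobreakdash-half of your argument is correct, and you rightly flag the identification of $\E'$ with $\E$ as the crux. The gap is that your argument for that crux does not go through, and as written it even aims at the wrong object. The compsat-basis presentation of $X = \spec(\D,R)$ built from $\E = \KO(X)$ is the proximity lattice $(\E^\op,\leq_{\E^\op})$ ($\KO(X)$ with converse inclusion as both order and relation). Saying that this ``coincides, up to order-dual relabelling, with $(\E,\leq_\E)$'' conflates two objects of $\MSPL$ that are in general \emph{not} m-isomorphic: under the order-duality between $\MSPL$ and $\JSPL$, an m-isomorphism between $(\E,\leq_\E)$ and $(\E^\op,\leq_{\E^\op})$ amounts to a j-isomorphism between $(\E^\op,\leq_{\E^\op})$ and $(\E,\leq_\E)$, which (by Theorem~\ref{thm:goalduality} and Stone duality) forces $\E \cong \E^\op$. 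The hypotheses of the corollary do not force this: take $(\D,R) = (\D,\leq_\D)$ with $\D \not\cong \D^\op$; then $(\D,\leq_\D)$ is m-isomorphic to $(\E,\leq_\E)$ with $\E \cong \D$, but not to the compsat presentation $(\D^\op,\leq_{\D^\op})$. So ``m-isomorphic to the compsat-basis presentation of $X$'' is precisely the statement one must \emph{not} prove; the relabelling step is where the corollary's whole content sits, and it cannot be glossed.

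What is actually missing is any link between the meet-strong structure of $(\D,R)$ itself and the space $X$: your compsat discussion only concerns $X$ and $\E$. The order-dual of Proposition~\ref{prop:spectralcase}, applied to the join-strong, distributive, reflexive $(\D^\op,R^{-1})$, yields that $(\D,R)$ is m-isomorphic to $(\KO(Y)^\op,\leq)$, where $Y := \spec(\D^\op,R^{-1})$ is the space of prime round ideals of $(\D,R)$. To land on the same $\E$ one must therefore prove two facts: first, that $Y$ is the de Groot dual $X^d$ of $X$ (this is the Jung--S\"underhauf fact that a doubly strong lattice presents $X$ and $X^d$ simultaneously, via prime round filters and prime round ideals respectively --- it is asserted as a remark in the paper but proved nowhere, and your proposal does not supply it); and second, the Hochster-duality fact that for spectral $X$ one has $\KO(X^d) = \{X\setminus U : U \in \KO(X)\} \cong \KO(X)^\op$, whence $\KO(Y)^\op \cong \E$. (Equivalently, one could instead prove the purely algebraic lemma that a j-isomorphism between \emph{doubly strong} proximity lattices is automatically an m-isomorphism; either way, some such argument is the real content of the corollary, and it is absent from the proposal.)
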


Regarding the extensions of morphisms, recall that at the end of section~\ref{sec:extmorphisms}, the question whether the $\pi$-extension of a j-morphism preserves all joins had to remain open. We can now show that, for distributive lattices, the $\pi$-extension of a j-morphism $T$ is concretely realized as the inverse image map of the continuous map to which $T$ corresponds via the duality. It will follow in particular that $T^\pi$ preserves all joins and meets.

More precisely, our set-up is as follows: let $T : (\D,R) \to (\E,S)$ be a j-morphism between join-strong distributive proximity lattices, and $f_T : \spec(\E,S) \to \spec(\D,R)$ the map corresponding to $T$ under the duality, i.e., $f_T(F) := T^{-1}[F]$, for $F \in \spec(\E,S)$. 

Let $h_\D : (\D,R) \to (\D,R)^\pi$ and $h_\E : (\E,S) \to (\E,S)^\pi$ be the $\pi$-canonical extensions, which, up to isomorphism, are the complete lattices of saturated sets of the spectra, as given in Theorem~\ref{thm:canextviaduality}. 

\begin{prop}\label{prop:joinpresviadual}
In the above setting, we have for all $u \in (\D,R)^\pi$ that $T^\pi(u) = f_T^{-1}[u]$. In particular, $T^\pi$ preserves all joins and meets.
\end{prop}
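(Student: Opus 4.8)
The plan is to show that $T^\pi$ and the preimage operator $f_T^{-1}[\cdot]$ agree on a generating family and then to propagate the agreement using preservation properties that are already available. Throughout I identify $(\D,R)^\pi$ with the lattice $\S$ of saturated sets of $\spec(\D,R)$ and $(\E,S)^\pi$ with the lattice of saturated sets of $\spec(\E,S)$, as in Theorem~\ref{thm:canextviaduality}; under this identification $h_\D(d) = U_d := \{F : d \in F\}$, the round ideal elements are exactly the open sets, and, since saturated sets are precisely the up-sets of the specialization order, arbitrary unions and intersections of saturated sets are again saturated, so joins in $\S$ are unions and meets are intersections. Because $f_T$ is continuous, $f_T^{-1}[\cdot]$ sends opens to opens and hence saturated sets to saturated sets, and, being a preimage operator, it commutes with all unions and all intersections; thus $f_T^{-1}[\cdot]$ is a map $\S \to (\E,S)^\pi$ preserving all joins and all meets. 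Note that the join-preservation of $T^\pi$ is \emph{not} assumed here: the only properties of $T^\pi$ I will use are that it preserves all meets and that, since $T$ is a $j$-morphism, it preserves joins of round ideal elements, both established in Section~\ref{sec:extmorphisms}.

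First I would check the equality on the basic opens. For $d \in \D$, Lemma~\ref{lem:isextension} gives $T^\pi(h_\D(d)) = \bigvee h_\E[T[d]] = \bigcup_{b \in T[d]} U_b$, where $U_b := \{F \in \spec(\E,S) : b \in F\}$. On the other hand, using $f_T(F) = T^{-1}[F]$, one computes $f_T^{-1}[U_d] = \{F : d \in T^{-1}[F]\} = \{F : F \cap T[d] \neq \emptyset\} = \bigcup_{b \in T[d]} U_b$. Hence $T^\pi(U_d) = f_T^{-1}[U_d]$ for every $d$.

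Next I would propagate this in two stages. Every round ideal element is the join of the basic opens $U_d$ below it; since $T^\pi$ preserves joins of round ideal elements and $f_T^{-1}[\cdot]$ preserves unions, the two maps agree on all round ideal elements, i.e.\ on all open sets. Then, by $R$-denseness (Proposition~\ref{prop:Rdensechar}), every $u \in \S$ is the meet of the round ideal elements above it; since $T^\pi$ preserves all meets and $f_T^{-1}[\cdot]$ preserves intersections, the two maps agree on all of $\S$. This proves $T^\pi(u) = f_T^{-1}[u]$ for all $u$. The final clause is then immediate: since $f_T^{-1}[\cdot]$ preserves all joins and meets and equals $T^\pi$, the map $T^\pi$ preserves all joins and meets.

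The argument is essentially mechanical, and the only point requiring genuine care is the logical ordering: the full join-preservation of $T^\pi$ must be obtained \emph{as a corollary} of the identity $T^\pi = f_T^{-1}[\cdot]$, not used as an input, so at the propagation step I can only invoke the weaker fact that $T^\pi$ preserves joins of round ideal elements, which is exactly what the $j$-morphism hypothesis buys us. The base-case computation on the $U_d$ is where the concrete description of $f_T$ via $T^{-1}$ and of $T^\pi$ via Lemma~\ref{lem:isextension} are matched up.
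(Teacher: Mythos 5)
Your proof is correct, and you are right to flag the circularity issue: the only facts about $T^\pi$ you use --- preservation of all meets, and preservation of arbitrary joins of round ideal elements when $T$ is a j-morphism --- are established in Section~\ref{sec:extmorphisms} before full join-preservation is known, so the logic is sound. But your route is genuinely different from the paper's. The paper never invokes Lemma~\ref{lem:isextension} or the preservation lemmas of Section~\ref{sec:extmorphisms}: it checks the identity on an \emph{arbitrary} open $y$ in one stroke, by unwinding the definition $T^\pi(y) = \bigvee h_\E[T[h_\D^{-1}[\downarrow\! y]]]$ against the direct computation $f_T^{-1}[y] = \{F : T^{-1}[F]\cap I \neq \emptyset\} = \bigcup_{e \in T[I]} h_\E(e)$, where $I = h_\D^{-1}[\downarrow\! y]$, and then passes to an arbitrary saturated $u$ by matching the defining meet $T^\pi(u) = \bigwedge\{T^\pi(y) : u \leq y \text{ round ideal element}\}$ against the fact that a saturated set is the intersection of its open neighbourhoods. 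Thus the paper's argument is self-contained given Theorem~\ref{thm:canextviaduality}; it needs nothing from the algebraic development of Section~\ref{sec:extmorphisms}, which fits its stated purpose of giving an \emph{independent}, duality-based proof that $T^\pi$ preserves all joins. Your argument instead verifies the identity only on the principal opens $U_d$, where Lemma~\ref{lem:isextension} applies, and propagates by a generators-and-preservation argument; this is tidy and mechanical, but it makes the proposition depend on the hardest lemmas of Section~\ref{sec:extmorphisms} (in particular the j-morphism-dependent preservation of arbitrary joins of round ideal elements), which is precisely the machinery the paper's direct computation bypasses. A minor point: your appeal to the fact that saturated sets are exactly the up-sets of the specialization order, used to see that joins of saturated sets are unions, is standard but not proved in the paper; the paper's closing sentence implicitly relies on the same fact, so no objection there.
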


\begin{proof}
For an open set $y$ of the space $\spec(\D,R)$, we have for the round ideal $I := h_\D^{-1}[\downarrow\!\!y]$ that $y = \bigvee h_\D[I]$. Hence,
\[f_T^{-1}[y] = \{F \in \spec(\E,S) : T^{-1}[F] \cap I \neq \emptyset \} = \bigcup_{e \in T[I]} h_\E(e) = \bigvee h_\E[T[h_\D^{-1}[\downarrow\!\!y]]],\]
so $f_T^{-1}[y]$ agrees with the definition of $T^\pi(y)$.

Now, for any saturated set $u$ of the space $\spec(\D,R)$, we have 
\begin{align*}
f_T^{-1}[u] &= \{F \in \spec(\E,S) \ | \ f(F) \in u\} \\
&= \{F \in \spec(\E,S) \ | \ \forall y \text{ open }: u \subseteq y \limp f(F) \in y \} \\
&= \bigwedge \{f_T^{-1}[y] : u \leq y \in \I((\D,R)^\pi)\},
\end{align*}
where the last step follows from the proof of Theorem~\ref{thm:canextviaduality}. So the definition of $T^\pi$ completely agrees with the values of $f_T^{-1}$. 

For the second statement, simply note that the inverse image map of any continuous function between topological spaces preserves all joins and meets in the complete lattice of saturated sets.
\end{proof}

We make some final remarks about our last result. On the one hand, it shows that duality can be a powerful tool to answer questions which are algebraically difficult (cf. the proofs in Section~\ref{sec:extmorphisms}). On the other hand, using the duality we can so far only prove results in the distributive setting, whereas the results in Section~\ref{sec:extmorphisms} hold in arbitrary proximity lattices, for which no duality is available yet. Moreover, the duality makes essential use of the Axiom of Choice, which was avoided completely in Section~\ref{sec:extmorphisms}.

In this section, we have connected the duality of Jung and S\"underhauf \cite{Jung1996} with two existing finitary dual equivalences, whose object parts are the same. The first of these is the duality between $\DLat$ and $\cat{SpecSp}_p$, and is reflected in the lower part of Diagram 1. Secondly, we have shown how the duality between stably compact spaces and join-strong distributive proximity lattices comes out naturally as `the Karoubi envelope of' the duality between $\DLat_j$ and $\cat{SpecSp}$. 

\newpage
\section*{Conclusion}
In this paper, we have shown that the theory of canonical extensions, which in the past has generated powerful results for logics based on lattices, can be extended to proximity lattices. The canonical extension gives an algebraic description of the saturated sets of a stably compact space, starting from any basis presentation of the space, without using the Axiom of Choice. 

One obvious direction for future work is to apply the canonical extension to logic and, more specifically, to the theory of probabilistic programming. One of the original motivations for the work of Jung and S\"underhauf \cite{Jung1996} was to develop a `continuous' version of Domain Theory in Logical Form (cf. \cite{Jung2004}). Since the `classical' canonical extension proved to be a powerful tool in modal logic, we expect that canonical extensions of proximity lattices can prove to be useful in the study of the Multi-lingual Sequent Calculus developed in a sequence of papers by Kegelman, Moshier, Jung and S\"underhauf (e.g., \cite{Kegelmann2002}, \cite{Moshier2002}, \cite{Jung2004}).

The definition of the canonical extension for proximity lattices opens up a variety of new questions regarding the behaviour of canonical extensions of maps between proximity lattices, in the line of \cite{Gehrke1994}, \cite{Gehrke2000} and \cite{Gehrke2001}. We believe that we have only scratched the surface of what can be said about these questions, by showing some properties of the canonical extension of proximity morphisms in Section~\ref{sec:extmorphisms}. Indeed, if one were to develop a theory of canonicity for logics based on proximity lattices, then showing more such properties would be an interesting direction for further research to pursue.

An advantage of the way we presented our canonical extension in Section~\ref{sec:canext} of this paper is that it is modular, in the following sense: join-strong proximity lattices have $\pi$-extensions, meet-strong proximity lattices have $\sigma$-extensions, and a doubly strong proximity lattice has both extensions, simply because it is both join- and meet-strong. Moreover, these two extensions coincide if, and only if, the additional relation is reflexive. This sheds new light on a part of the classical theory of canonical extensions which was not so well understood in the past, namely that even though a lattice has {\it one} canonical extension, there are {\it two} ways to extend a lattice map, i.e., a $\sigma$- and a $\pi$-version. In the context of our work, we would explain this apparent anomaly by saying that a lattice in principle does have both a $\sigma$- and a $\pi$-extension, but that these two extensions coincide because of the reflexivity of the lattice order.

An important related issue in the theory of proximity lattices and their canonical extensions that we think needs to be explored further is that of order-duality. More precisely, the power of canonical extensions lies in their ability to deal with additional operations on the lattice which may be order-preserving or order-reversing. It is thus important to understand in more detail the role of order-duality, which was not the focus of this paper. For the same reason, it also seems natural to wonder whether there is a natural, more symmetrical notion of extension for proximity lattices which do not satisfy any of the `strong' axioms.

In Section~\ref{sec:duality}, we have put the Jung and S\"underhauf duality from \cite{Jung1996} in a categorical perspective. A question which remains open is whether it is possible, as it was in the classical case \cite{Urquhart1978}, to remove the requirement of distributivity on the underlying lattice. This would involve replacing the spectrum of prime round filters by the space of maximal pairs of round filters and round ideals, in a sense which we believe could be made precise in future work.

A last important issue that we would like to understand better is how essential the use of duality in Section~\ref{subsec:connection}. Concretely, we would expect that versions of Propositions~\ref{prop:spectralcase} and \ref{prop:joinpresviadual} hold in which we no longer need to refer to the dual space and therefore also avoid the Axiom of Choice and the assumption that the underlying lattices are distributive.

\section*{Acknowledgements}
Many thanks to my PhD supervisor prof. Mai Gehrke, to prof. Achim Jung and to the anonymous referee. The major part of the research reported on in this paper was done during an Erasmus-funded stay at the LIAFA laboratory at Universit\'e Paris-Diderot VII, to whom I am most grateful for their hospitality. The PhD research project of the author has been made possible by NWO grant 617.023.815 of the Netherlands Organization for Scientific Research (NWO).

\bibliographystyle{amsplain}
\bibliography{proximitylattices}

\end{document}